\theoremstyle{plain}
\newtheorem{theorem}{Theorem}[section]
\newtheorem{corollary}[theorem]{Corollary}
\newtheorem{proposition}[theorem]{Proposition}
\newtheorem{lemma}[theorem]{Lemma}
\theoremstyle{definition}
\newtheorem{example}[theorem]{Example}
\theoremstyle{remark}
\newtheorem{remark}[theorem]{Remark}
\numberwithin{equation}{section}\theoremstyle{plain}
\newcommand{\Z}{{\mathbb Z}}
\newcommand{\B}{{\mathcal B}}
\newcommand{\A}{{\mathcal A}}
\newcommand{\J}{{\mathcal J}}
\newcommand{\C}{{\mathbb C}}
\newcommand{\Cc}{{\mathcal C}}
\newcommand{\Oo}{{\mathcal O}}
\newcommand\id{\operatorname{id}}
\newcommand\vect{\operatorname{Vec}}
\newcommand\ad{\operatorname{ad}}
\newcommand\coad{\operatorname{coad}}
\newcommand\Tr{\operatorname{Tr}}
\newcommand\co{\operatorname{co}}
\newcommand\comod{\operatorname{comod}}
\newcommand\End{\operatorname{End}}
\newcommand\Aut{\operatorname{Aut}}
\newcommand\Opext{\operatorname{Opext}}
\newcommand\res{\operatorname{res}}
\newcommand\Ind{\operatorname{Ind}}
\newcommand\SL{\operatorname{SL}}
\newcommand\PSL{\operatorname{PSL}}
\newcommand\GL{\operatorname{GL}}
\newcommand\SU{\operatorname{SU}}
\newcommand\SO{\operatorname{SO}}
\newcommand\Hom{\operatorname{Hom}}
\newcommand\Rep{\operatorname{Rep}}
\begin{document}
\title[Deformations of binary polyhedral groups]{Hopf algebra deformations of binary polyhedral groups}
\author{Julien Bichon}
\address{Julien Bichon:
Laboratoire de Math\'ematiques,
Universit\'e Blaise Pascal,
Complexe universitaire des C\'ezeaux,
63177~Aubi\`ere Cedex, France}
\email{Julien.Bichon@math.univ-bpclermont.fr
\newline \indent \emph{URL:}\/ http://www.math.univ-bpclermont.fr/-bichon}
\author{Sonia Natale}
\address{Sonia Natale: Facultad de Matem\'atica, Astronom\'\i a y F\'\i sica.
Universidad Nacional de C\'ordoba. CIEM -- CONICET. (5000) Ciudad
Universitaria. C\'ordoba, Argentina} \email{natale@famaf.unc.edu.ar
\newline \indent \emph{URL:}\/ http://www.famaf.unc.edu.ar/$\sim$natale}

\thanks{This work was partially supported by CONICET--CNRS, ANPCyT, SeCYT (UNC), FaMAF, Alexander von Humboldt Foundation, and the ANR project Galoisint.}

\subjclass{16W30}

\date{May 21, 2010.}

\begin{abstract} We show that semisimple Hopf algebras having
a self-dual faithful irreducible comodule of dimension $2$ are
always obtained as abelian extensions with quotient $\Z_2$. We
prove that nontrivial Hopf algebras arising in this way can be
regarded as deformations of binary polyhedral groups and describe
its category of representations. We also prove a strengthening of
a result of Nichols and Richmond on cosemisimple Hopf algebras
with a $2$-dimensional irreducible comodule in the finite
dimensional context. Finally, we give some applications to the
classification of certain classes of semisimple Hopf algebras.
\end{abstract}

\maketitle

\section{Introduction and main results}

Throughout the paper we work over  $k$,  an algebraically closed field of characteristic zero.
This paper is a contribution to the study of low dimensional semisimple
Hopf algebras, namely those that admit a faithful comodule of dimension $2$.
Our starting point is a theorem by Nichols and Richmond \cite{NR}, that
we recall first.

Let $H$ be a cosemisimple Hopf algebra over $k$ and let
$C\subseteq H$ be a simple subcoalgebra of dimension $4$ (this is equivalent to say that
$H$ has
an irreducible comodule of dimension $2$). The main
result of the paper \cite{NR} says that, in this case, at least
one of the following possibilities holds:
\begin{enumerate}\item\label{1} $H$ has a group-like element $g$ of order $2$
such that $gC = C$;
\item\label{2} $H$ has a Hopf subalgebra of dimension $24$, which contains
a group-like element $g$ of order $2$ such that $gC \neq C$;
\item\label{3} $H$ has a Hopf subalgebra of dimension $12$ or $60$;
\item\label{4} $H$ has a family $\{C_n: \, n\geq 1\}$ of simple
subcoalgebras such that $\dim C_n = n^2$, and for all $n\geq 2$,
\begin{equation*}C_n\mathcal S(C_2)  = C_{n-1} + C_{n+1}, \; n \text{
even,}\qquad  C_nC_2  = C_{n-1} + C_{n+1}, \; n \text{
odd.}\end{equation*}
\end{enumerate} See \cite[Theorem 11]{NR}.
Note that if $H$ is finite dimensional only the possibilities
\eqref{1}--\eqref{3} can arise, and then $\dim H$ is even (in particular this gives
a partial positive answer to one of Kaplansky's long-standing conjectures).

\medbreak In this paper we obtain a more precise description of
this result, in case $H$ is finite dimensional. We prove in
Subsection \ref{subgrupos} the following theorem that strengthens
the Nichols-Richmond Theorem above in the finite dimensional
context, and connects this class of Hopf algebras with polyhedral
groups.

\begin{theorem}\label{nr-ref} Let $H$ be a cosemisimple finite dimensional Hopf
algebra. Suppose $H$ contains a simple subcoalgebra $C$ of
dimension $4$. Then the subalgebra $B = k[C\mathcal S(C)]$ is a
commutative Hopf subalgebra of $H$ isomorphic to $k^{\Gamma}$,
where $\Gamma$ is a non cyclic finite subgroup of $\PSL_2(k)$ of even order.

\medbreak Furthermore, let $\chi \in C$ be the irreducible
character contained in $C$, and let $G[\chi] \subseteq G(H)$ be
the stabilizer of $\chi$ with respect to left multiplication. Then
$|G[\chi]|$ divides $4$, and
the following hold:
\begin{enumerate}\item[(i)] If $|G[\chi]| = 4$, then $B \simeq k^{\Z_2\times \Z_2}$.
\item[(ii)] If $|G[\chi]| = 2$, then $B \simeq k^{D_n}$, where $n \geq 3$.
\item[(iii)] If $|G[\chi]| = 1$, then $B \simeq k^{\mathbb A_4}$,
$k^{\mathbb S_4}$, or $k^{\mathbb A_5}$. \end{enumerate}
\end{theorem}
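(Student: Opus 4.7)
The proof proceeds in four steps. Since $H$ is finite-dimensional and cosemisimple over a field of characteristic zero, a theorem of Larson--Radford gives $\mathcal{S}^2=\id$; consequently $\mathcal{S}(C)$ is again a $4$-dimensional simple subcoalgebra, the product $C\mathcal{S}(C)$ is a subcoalgebra stable under $\mathcal{S}$ (as $\mathcal{S}(C\mathcal{S}(C))=\mathcal{S}^2(C)\mathcal{S}(C)=C\mathcal{S}(C)$), and hence $B=k[C\mathcal{S}(C)]$ is a finite-dimensional cosemisimple Hopf subalgebra of $H$.

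The heart of the argument is to show $B$ is commutative. Let $\chi\in C$ be the irreducible character of the $2$-dimensional simple $H$-comodule $V$. Since $V$ is simple, $\chi\mathcal{S}(\chi)=1+\xi$, giving a decomposition $V\otimes V^*\cong k\oplus W$ with $W$ a $3$-dimensional $H$-comodule; the simple subcoalgebras of $B$ are exactly those arising as summands in the tensor powers of $V\otimes V^*$. To show each such summand is one-dimensional I would argue by contradiction and descent: suppose some $W^{\otimes n}$ contained a simple summand of dimension $\geq 2$, yielding a $4$-dimensional simple subcoalgebra $C'\subseteq B$. Applying the Nichols--Richmond theorem to the finite-dimensional $B$ (or to $H$ with $C'$), and using that the infinite-family possibility~(4) is ruled out by finite-dimensionality, the fusion rules among the simple summands of $W^{\otimes n}$ should give a contradiction. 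Once commutativity is established, the structure theorem for commutative cosemisimple Hopf algebras over an algebraically closed field of characteristic zero yields $B\cong k^{\Gamma}$ for a finite group $\Gamma$.

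Next, to identify $\Gamma$ as a finite subgroup of $\PSL_2(k)$: the $3$-dimensional faithful $B$-comodule $W$ is self-dual (as $V\otimes V^*$ is) and inherits a non-degenerate symmetric invariant bilinear form from the identification $V\otimes V^*\cong\End(V)$, placing $\Gamma\hookrightarrow\SO(W)\cong\PSL_2(k)$. The cyclic case is ruled out because any cyclic subgroup of $\PSL_2(k)$ is conjugate into a maximal torus, whose action on the $3$-dimensional adjoint representation contains the trivial character; combined with the trivial summand already present in $V\otimes V^*$, this would force $\dim\Hom_H(k,V\otimes V^*)\geq 2$, contradicting the simplicity of $V$. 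Since every non-cyclic finite subgroup of $\PSL_2(k)$ has even order, $|\Gamma|$ is automatically even.

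Finally, for the classification by $|G[\chi]|$: a group-like $g\in G(H)$ satisfies $g\chi=\chi$ if and only if $kg\otimes V\cong V$, equivalently iff $kg$ is a simple subcomodule of $V\otimes V^*$. Hence $|G[\chi]|-1$ equals the number of one-dimensional $H$-comodule summands of $W$. Inspecting the adjoint $3$-dimensional representation of each non-cyclic finite subgroup of $\PSL_2(k)$ yields three nontrivial characters when $\Gamma=\Z_2\times\Z_2$, the sign character plus a $2$-dimensional irreducible when $\Gamma=D_n$ with $n\geq 3$, and an irreducible $3$-dimensional representation for $\Gamma\in\{\mathbb A_4,\mathbb S_4,\mathbb A_5\}$. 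In particular $|G[\chi]|\in\{1,2,4\}$, all dividing $4$. The main technical obstacle I foresee is the commutativity of $B$: the Nichols--Richmond descent must be carried out carefully enough to avoid circular reasoning and to control how $2$-dimensional simple summands interact with $B$ itself.
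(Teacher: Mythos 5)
Your outline of the peripheral steps (that $\mathcal S^2=\id$ forces $B$ to be a Hopf subalgebra, the non-cyclicity argument via diagonalizing a generator and producing two trivial subcomodules of $V\otimes V^*$, and the final case analysis of $|G[\chi]|$ by counting one-dimensional summands of $W$) matches the paper. But the heart of the theorem is the commutativity of $B$, and there your proposed route has a genuine gap --- in fact it aims at a false intermediate statement. You propose to show that every simple summand of the tensor powers of $W$ is one-dimensional, deriving a contradiction from Nichols--Richmond otherwise. Even if this were achievable it would prove that $B$ is spanned by group-like elements, i.e.\ that $B$ is \emph{cocommutative} (a group algebra), not that it is commutative. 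Worse, the claim is simply false in the situations the theorem describes: when $B\simeq k^{\mathbb A_5}$ the comodule $W$ is itself a $3$-dimensional simple summand (the adjoint representation of $\mathbb A_5$), and when $B\simeq k^{D_n}$, $n\geq 3$, the tensor powers of $W$ contain $2$-dimensional simple summands, so $B$ does contain simple subcoalgebras of dimension $4$ and $9$. There is therefore nothing for the Nichols--Richmond descent to contradict, and no argument along these lines can establish commutativity. (A smaller issue in the same step: a simple summand of dimension $d\geq 2$ yields a simple subcoalgebra of dimension $d^2$, not necessarily $4$.)

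The paper's actual argument is of a completely different nature and is worth internalizing: it exploits the fact that $A=V\otimes V^*\simeq M_2(k)$ is an $H$-comodule \emph{algebra} and that the coaction preserves the trace, hence preserves $W=\ker\Tr$, which carries the quaternion basis $e_1,e_2,e_3$ of trace-zero matrices. Writing $\rho(e_i)=\sum_j e_j\otimes x_{ji}$, the requirement that $\rho$ respect the quaternion relations $e_i^2=-e_0$, $e_1e_2=e_3$, etc.\ forces the nine coefficients $x_{ij}$ to commute pairwise (coefficients in a common row or column commute by the square relations and the antipode; the remaining commutators are themselves expressible as $\pm x_{\langle lt\rangle,\langle ij\rangle}$, which closes the argument), and moreover shows that the $x_{ij}$ satisfy exactly the defining relations of $\Oo[\SO_3(k)]\simeq\Oo[\PSL_2(k)]$. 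This is what simultaneously yields commutativity of $B=k[x_{ij}]$ and the embedding $\Gamma\subseteq\PSL_2(k)$ (your appeal to an invariant symmetric form only gives $\Gamma\subseteq O(W)$ and in any case presupposes the commutativity you have not yet established). Once you replace your second step by this comodule-algebra computation, the rest of your proposal goes through essentially as in the paper.
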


Observe that, in addition, the category of $B$-comodules is contained in
the adjoint subcategory of the fusion category $H-\comod$ of
finite dimensional $H$-comodules.

\medbreak In special situations, the above theorem may be refined
as follows. Indeed, assume that the irreducible $2$-dimensional
comodule $V$ corresponding to $C$ satisfies $V^* \otimes V \simeq
V \otimes V^*$. Then the Hopf subalgebra $H'$ of $H$ generated by
$C$ fits into a cocentral exact sequence
$$k\to k^\Gamma \to H' \to  k\Z_m\to k$$
for  a polyedral group $\Gamma$ of even order and $m\geq 1$.
See Theorem \ref{shortspecial}.

Moreover when $V$ is self-dual, the Hopf algebra $H'=k[C]$ generated by $C$ can be
completely described. See Theorem \ref{clasificacion} below.

\medbreak Compact quantum subgroups of $\text{SU}_{-1}(2)$, or in
other words, quotient Hopf $*$-algebras $\Oo_{-1}[\text{SU}_2(\C)]
\to H$, were determined in \cite{podles}. In this paper we give an
algebraic version of this result. We obtain a description, using
Hopf algebra exact sequences, of quotient Hopf algebras
$\Oo_{-1}[\SL_2(k)] \to H$ in the finite dimensional case. See
Theorem \ref{explicito}. Our approach gives a more precise
description of such quotients, and also implies a description of
their tensor categories of representations in terms of
$\Z_2$-equivariantizations of certain pointed fusion categories.
See Corollary \ref{representations}.

It turns out that every such Hopf algebra $H$ can be regarded as a
deformation of an appropriate binary polyhedral group. We remark
that such groups admit no nontrivial cocycle deformation in the
sense of \cite{movshev, davydov}, since every Sylow subgroup is
either generalized quaternion or cyclic.

\medbreak We combine this result with the classification of Hopf
algebras associated to non-degenerate bilinear forms in the
$2\times 2$ case \cite{B, DL}, to prove the following theorem.

\begin{theorem}\label{clasificacion} Let $H$ be a semisimple Hopf
algebra over $k$. Assume $H$ has a  self-dual faithful irreducible
comodule $V$ of dimension $2$. Let $\nu(V) = \pm 1$ denote the
Frobenius-Schur indicator of $V$.

Then we have:

\begin{enumerate}\item[(i)] Suppose $\nu(V) = -1$. Then  $H$ is commutative. Moreover,
$H \simeq k^{\widetilde \Gamma}$,  where $\widetilde \Gamma$ is a
non-abelian binary polyhedral group.

\item[(ii)] Suppose $\nu(V) = 1$. Then either $H$ is commutative and
isomorphic to $k^{D_n}$, $n \geq 3$, or $H$ is isomorphic to one
of the nontrivial Hopf algebra deformations $\mathcal A[\widetilde
\Gamma]$ or $\mathcal B[\widetilde \Gamma]$ of the binary
polyhedral group $\widetilde \Gamma$, as in Subsection
\ref{descripcion}. \end{enumerate}\end{theorem}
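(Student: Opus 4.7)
The plan is to use the universality of the Dubois--Violette--Launer Hopf algebras of non-degenerate bilinear forms, together with the Frobenius--Schur dichotomy, to realize $H$ as a quotient of either $\Oo(\SL_2(k))$ or $\Oo_{-1}[\SL_2(k)]$, and then to invoke the already-established Theorem~\ref{explicito}. Since $V$ is self-dual and irreducible of dimension $2$, Schur's lemma produces a non-degenerate invariant bilinear form $E \colon V \otimes V \to k$, unique up to scalar. In dimension two such an $E$ is either symmetric or antisymmetric, and this alternative is detected by the Frobenius--Schur indicator: $\nu(V) = 1$ when $E$ is symmetric and $\nu(V) = -1$ when $E$ is antisymmetric. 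By the universal property of the Hopf algebra $\B(E)$ there is a surjective Hopf algebra map $\B(E) \twoheadrightarrow k[C]$; since $V$ is faithful and self-dual, $\mathcal S(C) = C$ and $k[C] = H$. The $2\times 2$ classification in \cite{B, DL} identifies $\B(E) \simeq \Oo(\SL_2(k))$ when $E$ is antisymmetric and $\B(E) \simeq \Oo_{-1}[\SL_2(k)]$ when $E$ is symmetric.

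For part (i), if $\nu(V) = -1$, then $H$ is a finite-dimensional quotient of the commutative Hopf algebra $\Oo(\SL_2(k))$, so it is commutative and isomorphic to $k^G$ for some finite subgroup $G \subseteq \SL_2(k)$, with $V$ the restriction of the tautological representation. Irreducibility of $V$ forces $G$ to be non-abelian, and so $G$ is one of the non-abelian binary polyhedral groups $\widetilde\Gamma$.

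For part (ii), if $\nu(V) = 1$, then $H$ is a finite-dimensional quotient of $\Oo_{-1}[\SL_2(k)]$, and Theorem~\ref{explicito} already provides the complete list of such quotients. The commutative ones correspond to finite subgroups of the abelianization of $\Oo_{-1}[\SL_2(k)]$, which after killing all commutators is easily seen to be the coordinate ring of a $2$-dimensional orthogonal-type group scheme; its non-abelian finite subgroups that admit a faithful irreducible $2$-dimensional representation are precisely the dihedral groups $D_n$ with $n \geq 3$, yielding the alternative $H \simeq k^{D_n}$. The remaining non-commutative quotients are, by Theorem~\ref{explicito}, exactly the nontrivial deformations $\A[\widetilde\Gamma]$ and $\B[\widetilde\Gamma]$ introduced in Subsection~\ref{descripcion}.

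The main technical hurdle is matching the bilinear-form presentation of $H$ used here with the concrete presentations of $\A[\widetilde\Gamma]$ and $\B[\widetilde\Gamma]$ appearing in Theorem~\ref{explicito}, i.e., verifying that the surjection $\Oo_{-1}[\SL_2(k)] \twoheadrightarrow H$ factors through the appropriate quotient in each binary polyhedral case. Once this dictionary is in place, the theorem follows by a case-by-case reading of Theorem~\ref{explicito}; the heavy lifting (the classification of finite quotients of $\Oo_{-1}[\SL_2(k)]$) has already been carried out there.
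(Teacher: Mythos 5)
Your proposal is correct and follows essentially the same route as the paper: invoke the universal property of $\B(E)$ together with the Frobenius--Schur dichotomy (Proposition \ref{moti}), identify $\B(E)$ with $\Oo[\SL_2(k)]$ or $\Oo_{-1}[\SL_2(k)]$ according to the symmetry type of $E$ (Corollary \ref{sym-ssym}), and then read off the finite-dimensional quotients from Theorem \ref{explicito}. Your extra discussion of the commutative quotients in case (ii) is a reasonable filling-in of a step the paper leaves implicit, not a genuinely different argument.
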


Let $C \subseteq H$ be the simple subcoalgebra containing $V$.
Note that the assumptions on $V$ in Theorem \ref{clasificacion}
amount to the assumption that $\dim C = 4$, $\mathcal S(C) = C$
and $C$ generates $H$ as an algebra.

\medbreak Theorem \ref{clasificacion} is proved in Subsection
\ref{descripcion}. We point out that only one nontrivial
deformation, $\mathcal B[\widetilde I]$, can occur in the case of
the binary icosahedral group $\widetilde I$. The algebra and
coalgebra structures of the Hopf algebras $\mathcal A[\widetilde
\Gamma]$, $\mathcal B[\widetilde \Gamma]$, are described in Remark
\ref{chardeg}. Their fusion rules are the same as the ones of
$\widetilde{\Gamma}$, see Remark \ref{fusion}.

In particular, if $H$ is one of the Hopf algebras in (ii), then
$\dim H = |\widetilde \Gamma|$ can be either  $24$, $48$, $120$ or
$4n$, $n \geq 2$. It turns out that, when $k = \C$ is the field of
complex numbers, such Hopf algebras admit a Kac algebra structure.

Furthermore, let $\vect^{\Gamma}$ be the fusion category of
$\Gamma$-graded vector spaces, where $\Gamma \simeq \widetilde
\Gamma /Z(\widetilde \Gamma)$ is the corresponding polyhedral
group. Then we have an equivalence of fusion categories $\Rep H
\simeq (\vect^{\Gamma})^{\Z_2}$, where the last category is the
equivariantization of the category $\vect^{\Gamma}$ with respect
to an appropriate action of $\Z_2$.

\medbreak The Hopf algebras $\mathcal A[\widetilde D_n]$,
$\mathcal B[\widetilde D_n]$, $n \geq 3$, were constructed and
studied by Masuoka in \cite{ma-contemp}.  By the results in
\textit{loc. cit.}, $\mathcal A[\widetilde D_n]$ is the only
nontrivial cocycle deformation of the commutative Hopf algebra
$k^{D_{2n}}$. On the other hand, the Hopf algebra $\mathcal
B[\widetilde D_n]$ is self-dual and it has no nontrivial cocycle
deformations.

In the case when $n = 2$, $\mathcal A[\widetilde D_2] \simeq
\mathcal B[\widetilde D_2]$, and they are both isomorphic to the
$8$-dimensional Kac-Paljutkin Hopf algebra $H_8$.

\medbreak We give some applications of the main results to the
classification of a special class of semisimple Hopf algebras. We
consider a semisimple  Hopf algebra $H$ such that $\deg \chi \leq
2$, for all irreducible character $\chi \in H$.

We show in Theorem \ref{1m2n} that such a Hopf algebra $H$ is not
simple. Moreover, we prove the existence of certain exact
sequences for $H$, which imply that either $H$ or $H^*$ contains a
nontrivial central group-like element and, in addition, that $H$
is lower semisolvable. See Corollary \ref{semisoluble}.

An application to semisimple Hopf algebras of dimension $60$ is
also given in Subsection \ref{dim60}. We show that for two known
examples of simple Hopf algebras of this dimension, the Hopf
algebras are determined by its coalgebra types.

\medbreak The paper is organized as follows: in Section \ref{one}
we recall some facts on Hopf algebra extensions and their
relations with the categories of representations. Section
\ref{nr-th} is devoted to the proof of Theorem \ref{nr-ref}. In
Section \ref{two} we recall the definition and basic properties of
the Hopf algebra of a non-degenerate bilinear form and its
relation with the classification problem we consider. We study in
this section a canonical $\Z_2$-grading in the category of finite
dimensional comodules. In Section \ref{2por2} we discuss the
$2\times 2$ case and further facts related to the Hopf algebra
$\Oo_{-1}[\SL_2(k)]$ and the classification of its finite
dimensional quotients. Section \ref{appl} contains some
applications to the classification of certain classes of
semisimple Hopf algebras. We include at the end an Appendix with
some facts concerning a special kind of $\Opext$ groups that
include those related to the Hopf algebras $\mathcal A[\widetilde
\Gamma]$, $\mathcal B[\widetilde \Gamma]$.

\subsection*{Acknowledgement} The authors thank Florian Steiner for pointing out a mistake in a previous version of the proof of Proposition 3.2. The research of S. Natale was done during a stay at the Mathematisches Institut der Universit\" at M\" unchen, as an Alexander von Humboldt Fellow. She thanks Prof. Hans-J\" urgen Schneider for the kind hospitality.
\section{Preliminaries}\label{one}

\subsection{Hopf algebra exact sequences}\label{exact-seq}

Recall that a sequence  of Hopf algebra maps
\begin{equation}\label{seq}k \to K \overset{i}\to H \overset{p}\to \overline H \to
k,\end{equation} is called \emph{exact} if the following
conditions hold:
\begin{enumerate}\item $i$ is injective and $p$ is surjective,
\item $p \circ i$ = $\epsilon 1$,
\item $\ker p =HK^+$, \item $K = H^{\co p} = \{ h \in H:\, (\id \otimes p)\Delta(h) = h \otimes 1
\}$. \end{enumerate} Assume we have a sequence \eqref{seq}. If $H$
is faithfully flat over $K$ (in particular if $H$ is finite
dimensional), then (1), (2) and (3) imply (4). On the other hand,
if $H$ is faithfully coflat over $\overline H$, then (1), (2) and
(4) imply (3). See e.g. \cite{ad}.

\medbreak We shall say that the Hopf algebra map $p: H \to
\overline H$ is \emph{cocentral} if $p(h_1) \otimes h_2 = p(h_2)
\otimes h_1$, for all $h\in H$.

\begin{lemma}\label{exactness} Let $H$ be a Hopf algebra. Let also $G$ be a finite group and let $p: H \to kG$ be a
surjective Hopf algebra map. Suppose $p$ is cocentral. Then there
is an exact sequence of Hopf algebras $k \to K \to H
\overset{p}\to kG \to k$, where $K = H^{\co p}$. \end{lemma}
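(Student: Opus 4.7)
The plan is to verify the four conditions (1)--(4) of exactness from Subsection~\ref{exact-seq} for the sequence $k \to K \to H \overset{p}\to kG \to k$, with $K := H^{\co p}$. Conditions (1) and (2) are immediate from the hypotheses and the definition of coinvariants, and (4) holds by construction. So the real content is to show that (a) $K$ is a Hopf subalgebra of $H$ (not merely a subalgebra), and (b) $\ker p = HK^+$.

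For (a), the first move is to use cocentrality to identify $K$ with the left coinvariants ${}^{\co p}H$ as well: cocentrality of $p$ is equivalent to $(p\otimes \id)\Delta = \tau\circ(\id\otimes p)\Delta$, where $\tau$ is the flip of tensor factors, so for $h\in K$ one gets $p(h_1)\otimes h_2 = 1\otimes h$. Right coinvariance then gives $\Delta(K)\subseteq H\otimes K$, left coinvariance gives $\Delta(K)\subseteq K\otimes H$, and intersecting inside $H\otimes H$ (using the standard identity $(H\otimes K)\cap(K\otimes H) = K\otimes K$ for subspaces over a field) yields $\Delta(K)\subseteq K\otimes K$. Closure under the antipode is a short formal computation: apply $S\otimes S$ to $p(h_1)\otimes h_2 = 1\otimes h$, use $S\circ p = p\circ S$, and flip the tensorands to obtain $S(h_2)\otimes p(S(h_1)) = S(h)\otimes 1$; since $\Delta(S(h)) = S(h_2)\otimes S(h_1)$, this is precisely $(\id\otimes p)\Delta(S(h)) = S(h)\otimes 1$, i.e., $S(h)\in K$.

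For (b), I would invoke the implication recalled in Subsection~\ref{exact-seq}: if $H$ is faithfully coflat over $\overline H$, then (1), (2) and (4) already force (3). Here $\overline H = kG$ with $G$ finite and $\car k = 0$, so $kG$ is cosemisimple; every right $kG$-comodule is therefore injective, $H$ is faithfully coflat over $kG$ via $p$, and the conclusion $\ker p = HK^+$ follows.

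I do not anticipate any serious obstruction: the lemma packages the classical principle that a cocentral Hopf epimorphism is automatically ``normal'' and, when the quotient is cosemisimple, yields an honest exact sequence. The only delicate ingredient is the cocentrality--antipode calculation sketched above, and it is purely formal.
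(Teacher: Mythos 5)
Your proof is correct and follows essentially the same route as the paper: verify conditions (1), (2), (4), use cocentrality to show $K = H^{\co p} = {}^{\co p}H$ is a Hopf subalgebra, and deduce condition (3) from faithful coflatness of $H$ over $kG$ (the paper simply cites Schneider for this last step, whereas you derive coflatness from cosemisimplicity of $kG$ — which, incidentally, holds as a coalgebra over any field, no characteristic-zero hypothesis needed — with faithfulness coming from surjectivity of $p$). Your explicit verification that $K$ is closed under $\Delta$ and $S$ fills in details the paper asserts without proof.
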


\begin{proof} Let $I = \ker p$. Then $I$ is a coideal of $H$ of
finite codimension. Since $p$ is cocentral by assumption, then $I$
is left and right conormal.  Also, $K = H^{\co p} = {}^{\co p}H$
is a Hopf subalgebra of $H$, and thus we have a sequence of Hopf
algebra maps \eqref{seq}, that satisfies (1), (2) and (4). By
\cite[2.1]{schneider}, $H$ is left and right faithfully coflat
over $H/I \simeq kG$. Hence the sequence is exact, as claimed.
\end{proof}

A Hopf subalgebra $K$ of $H$ is called \emph{normal} if it is
stable under both left and right adjoint actions of $H$ on itself,
defined, respectively, by
$$\ad_l(h)(x) = h_1x\mathcal S(h_2), \quad \ad_r(h)(x) = \mathcal S(h_1)x
h_2,$$ for all $h, x \in H$. If $K \subseteq H$ is a normal Hopf
subalgebra, then the ideal $HK^+$ is a Hopf ideal and the
canonical map $H \to \overline H : = H/HK^+$ is a Hopf algebra
map. Hence, if $H$ is faithfully flat over $K$,  there is an exact
sequence \eqref{seq} where all maps are canonical. A Hopf algebra
is said to be simple if it has no proper normal Hopf subalgebra.

\medbreak Suppose, from now on, that $H$ is finite dimensional.
Then the exact sequence above is cleft and $H$ is isomorphic to a
bicrossed product $H \simeq K\#_{\sigma}^{\tau}\overline H$, for
appropriate compatible data.

\medbreak The exact sequence \eqref{seq} is called \emph{abelian}
if $K$ is commutative and $\overline H$ is cocommutative. In this
case $K \simeq k^N$ and $\overline H \simeq k F$, for some finite
groups $N$ and $F$; in particular, $H$ is semisimple.

As explained in \cite[Section 4]{Maext} such an extension is
determined by a matched pair $(F, N)$ with respect to compatible
actions $\vartriangleright: N \times F \to F$ and
$\vartriangleleft: N \times F \to N$, and $2$-cocycles $\sigma: k
F\otimes k F \to (k^{N})^{\times}$ and $\tau: kN\otimes kN \to
(k^F)^{\times}$, subject to appropriate compatibility conditions.
For a fixed matched pair $(F, N)$, the isomorphism classes of
extensions $k \to k^N \to H \to kF \to k$, or equivalently, the
equivalence classes of pairs $(\sigma, \tau)$, form an abelian
group that we shall denote $\Opext(k^N, kF)$.

\medbreak Since they will appear later on in this paper, we recall
here the explicit formulas for the multiplication,
comultiplication and antipode in the bicrossed product $k^N
{}^{\tau}\#_{\sigma}kF$ representing the class of $(\sigma, \tau)$
in $\Opext(k^N, kF)$:
\begin{align}\label{mult}(e_s \# x)  (e_t \# y)
& =  \delta_{s \vartriangleleft x, t} \, \sigma_s(x, y)  \, e_s \# xy, \\
\label{comult}\Delta(e_s \# x) & = \sum_{gh = s} \tau_x(g, h) \,
e_g \# (h \vartriangleright x) \otimes e_h \# x,\end{align}
\begin{equation}\label{antipode} \mathcal S(e_g \# x)  = \sigma_{(g
\vartriangleleft x)^{-1}}((g\vartriangleright x)^{-1},
g\vartriangleright x)^{-1} \, \tau_x(g^{-1}, g)^{-1} \,
e_{(g\vartriangleleft x)^{-1}} \# (g\vartriangleright x)^{-1},
\end{equation}
for all $s, t \in N$, $x, y \in F$, where $\sigma_s(x, y) =
\sigma(x, y) (s)$ and $\tau_x(s, t) = \tau(s, t) (x)$.

\begin{remark}\label{coalg-abel} Suppose that the action $\vartriangleright:N \times F \to
F$ is trivial. Then it follows from formula \eqref{comult} that
the subspaces $R_x : = k^N\#x$ are subcoalgebras of $H =
k^N{}^{\tau}\#_{\sigma}kF$ and we have a decomposition of
coalgebras $H = \oplus_{x\in F}R_x$.

Moreover, in this case, $\tau_x: N\times N \to k^{\times}$ is a
$2$-cocycle on $N$ and, as coalgebras, $k^N \# x \simeq
(k_{\tau_x}N)^*$, where $k_{\tau_x}N$ denotes the twisted group
algebra. \end{remark}

\subsection{Type of a cosemisimple Hopf algebra}

For a Hopf algebra $H$ we shall denote by $H$-comod the tensor
category of its finite dimensional (right) comodules.

Let $H$ be a finite dimensional cosemisimple Hopf algebra over
$k$. As a coalgebra, $H$ is isomorphic to a direct sum of full
matrix coalgebras
\begin{equation}\label{estructura} H \simeq k^{(n)} \oplus \bigoplus_{d_i > 1}
M_{d_i}(k)^{(n_i)},\end{equation} where $n = |G(H)|$.

If we have an isomorphism as in \eqref{estructura},  we shall say
that $H$ is \emph{of type} $(1, n; d_1, n_1; \dots; d_r, n_r)$
\emph{as a coalgebra}. If $H^*$, which is also cosemisimple, is of
type $(1, n; d_1, n_1; \dots )$ as a coalgebra, we shall say that
$H$ is \emph{of type} $(1, n; d_1, n_1; \dots )$ \emph{as an
algebra}.

So that $H$ is of type $(1, n; d_1, n_1; \dots; d_r, n_r)$ as a
(co-)algebra if and only if $H$ has $n$ non-isomorphic
one-dimensional (co-)representations,  $n_1$ non-isomorphic
irreducible (co-)representations of dimension $d_1$, etc.

\medbreak By an \emph{irreducible character} $\chi \in H$, we
shall mean the character $\chi = \chi_V$ of an irreducible
corepresentation $V$ of $H$. Hence, $\chi_V$ is a cocommutative
element of $H$, such that $f(\chi) = \Tr_V(f)$, for all $f \in
H^*$, regarding $V$ as an irreducible representation of $H^*$. The
dimension of $V$ will be called the \emph{degree} of $\chi$,
denoted $\deg \chi$.

Every irreducible character $\chi \in H$ is contained in a unique
simple subcoalgebra. If this subcoalgebra is isomorphic to
$M_{d_i}(k)$, then $d_i = \deg \chi$.

\medbreak Suppose $H$ is of type $(1, n; d_1, n_1; \dots; d_r,
n_r)$ as a coalgebra. It follows from the Nichols-Zoeller Theorem
\cite{NZ}, that $n$ divides both $\dim H$ and $n_i d_i^2$, for all
$i$. Moreover, if $d_i = 2$ for some $i$, then the dimension of
$H$ is even \cite{NR}. Also, by \cite{ZS}, if $n = 1$, then $\{
d_i : d_i > 1 \}$ has at least three elements.

\subsection{The adjoint subcategory}\label{adjoint}

Let  $\Cc$ be a $k$-linear  abelian semisimple tensor category. Recall from \cite[8.5.]{ENO} that the \emph{adjoint
subcategory} $\Cc_{\ad}$ is the full tensor subcategory generated by the
simple components of $V\otimes V^*$, where $V$ runs over the set
of (isomorphism classes of) simple objects of $\Cc$.

\medbreak Suppose $\Cc = \Rep H$ is the fusion category of finite
dimensional representations of $H$, where $H$ is a semisimple
(hence finite dimensional) Hopf algebra over $k$.

In this case there exists a unique maximal \emph{central} Hopf
subalgebra $K$ of $H$, such that $\Cc_{\ad} = \Rep H_{\ad}$, where
$H_{\ad} = H/HK^+$ is the corresponding quotient Hopf algebra.
Moreover, one has $K = k^{U(\Cc)}$, where $U(\Cc)$ is the
\emph{universal grading group} of $\Cc$. See \cite[Theorem
3.8]{gelaki-nik}. In particular, there is a central extension of
Hopf algebras $$k \to k^{U(\Cc)} \to H \to H_{\ad} \to k.$$

There is also a dual construction associated to
the category $H-\comod$, for $H$ cosemisimple.
Indeed, the Hopf subalgebra $H_{\coad}$ of $H$ determined
by $H_{\coad}-\comod = (H-\comod)_{\ad}$
gives a universal cocentral exact sequence
\begin{equation}\label{univcocentral}k \to H_{\coad} \to H \to kU(\Cc')\to k
\end{equation}
where  $\Cc' = H-\comod$.

\medbreak Note that the notion of simplicity of a finite dimensional Hopf
algebra is self-dual. As a consequence of the above, we get the
following lemma:

\begin{lemma}\label{h-ad} Suppose $H$ is simple and not commutative. Then $H_{\ad} = H=H_{\coad}$. \qed \end{lemma}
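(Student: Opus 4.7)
The plan is to prove the two equalities $H_{\ad}=H$ and $H_{\coad}=H$ separately, each time by identifying the relevant Hopf sub-object as a normal Hopf subalgebra of $H$ and then invoking simplicity.

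For $H_{\ad}=H$: By the discussion preceding the lemma, $H_{\ad}=H/HK^{+}$, where $K=k^{U(\Cc)}$ is the unique maximal central Hopf subalgebra of $H$. A central Hopf subalgebra is in particular normal (the adjoint actions act trivially on it), so by simplicity of $H$ we must have $K=k$ or $K=H$. The case $K=H$ would force $H=K$ to be commutative, contradicting the hypothesis. Therefore $K=k$, so $HK^{+}=0$, and $H_{\ad}=H$.

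For $H_{\coad}=H$: Dualize the argument. The universal cocentral exact sequence \eqref{univcocentral} presents $H_{\coad}$ as a normal Hopf subalgebra of $H$ (this is automatic for the kernel of an exact sequence of Hopf algebras). Simplicity again gives $H_{\coad}\in\{k,H\}$. If $H_{\coad}=k$, the exact sequence collapses to an isomorphism $H\simeq kU(\Cc')$, so $H$ is cocommutative and $H^{*}$ is commutative. Simplicity is self-dual, so $H^{*}$ is itself a simple Hopf algebra; applying the first part of the argument to $H^{*}$ (and reading the hypothesis dually, since noncommutativity of $H^{*}$ corresponds to noncocommutativity of $H$) rules this case out, leaving $H_{\coad}=H$.

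There is no real computational obstacle here; the content of the proof is entirely structural. The main point to get right is the observation that both $H_{\ad}$ and $H_{\coad}$ arise as the quotient by, respectively, the subobject of, universal normal sub-objects described in Subsection \ref{adjoint}. Once this is in hand, simplicity of $H$ reduces each case to a dichotomy whose nontrivial branch is incompatible with the (self-dual) noncommutativity assumption.
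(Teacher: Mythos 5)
Your identification of the two normal sub-objects is exactly what the paper intends: the lemma carries a bare q.e.d. precisely because it is meant to follow from the discussion in Subsection \ref{adjoint}, namely that $k^{U(\Cc)}$ is a central (hence normal) Hopf subalgebra with $H_{\ad}=H/H(k^{U(\Cc)})^{+}$, and that $H_{\coad}$ is the normal Hopf subalgebra of coinvariants of the cocentral sequence \eqref{univcocentral}. Your first half ($H_{\ad}=H$) is complete and correct, and matches the intended argument.

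The second half has a genuine gap at the last step. Having reduced to the dichotomy $H_{\coad}\in\{k,H\}$ and observed that $H_{\coad}=k$ forces $H\simeq kU(\Cc')$ to be cocommutative, you try to derive a contradiction by ``applying the first part of the argument to $H^{*}$.'' But the first part requires the Hopf algebra it is applied to to be \emph{noncommutative}, and in the case you are trying to exclude $H^{*}$ is commutative; your own parenthetical gives the game away, since the dual of the hypothesis ``$H$ is not commutative'' is ``$H$ is not cocommutative,'' which is not among the hypotheses of the lemma. In fact the case cannot be excluded from the stated hypotheses: if $G$ is a nonabelian finite simple group, then $H=kG$ is simple (its normal Hopf subalgebras are the $kN$ with $N$ normal in $G$) and noncommutative, with $H_{\ad}=H$ but $H_{\coad}=k$, because every simple $kG$-comodule $k_g$ satisfies $k_g\otimes k_g^{*}\simeq k$. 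So either one adds the hypothesis that $H$ is not cocommutative --- equivalently, reads ``not commutative'' as ``nontrivial'' in the sense used elsewhere in the paper --- in which case your duality step closes correctly, or the equality $H_{\coad}=H$ fails. This defect sits in the statement itself as much as in your write-up: your argument is the intended one, and the point where it breaks is exactly the point where the lemma needs the extra hypothesis.
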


In other words, if $H$ is simple, then $H$ is generated as algebra
by the simple subcoalgebras appearing in the decomposition of
$C\mathcal S(C)$, where $C$ runs over the simple subcoalgebras of
$H$.

\begin{lemma} \label{chichi*} Suppose $H$ contains a faithful irreducible character
$\chi$. Assume in addition that $\chi \chi^* = \chi^* \chi$. Then
$H_{\coad} = k[C\mathcal S(C)]$, where $C \subseteq H$ is the
simple subcoalgebra containing $\chi$. \end{lemma}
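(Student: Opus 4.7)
The plan is to translate the identity into a statement about the fusion category $\Cc=H\text{-}\comod$. Let $V$ be the simple $H$-comodule whose character is $\chi$; then $C$ is the simple subcoalgebra spanned by the matrix coefficients of $V$, so $\mathcal S(C)$ is the subcoalgebra corresponding to $V^*$ and $C\mathcal S(C)$ is the subcoalgebra whose simple constituents are exactly the simple subcoalgebras containing the irreducible characters appearing in the decomposition of $\chi\chi^{*}$. Hence $k[C\mathcal S(C)]$ corresponds, under the coalgebra/tensor-subcategory correspondence, to the full tensor subcategory $\Cc_0\subseteq\Cc$ generated by the simple constituents of $V\otimes V^{*}$. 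By definition of $\Cc_{\ad}$, we already have $\Cc_0\subseteq\Cc_{\ad}$, which gives $k[C\mathcal S(C)]\subseteq H_{\coad}$.

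The main point is the reverse inclusion, that is, every object of the form $W\otimes W^{*}$ with $W$ simple lies in $\Cc_0$. Since $\chi$ is faithful, every simple $W\in\Cc$ appears as a subobject of some iterated tensor product of copies of $V$ and $V^{*}$. Thus one can find integers $m,n\ge0$ and a surjection (equivalently, a summand inclusion, as $\Cc$ is semisimple) showing that $W$ is contained in a product of $m$ copies of $V$ and $n$ copies of $V^{*}$ in some order. Consequently $W\otimes W^{*}$ is contained in a product involving $m+n$ copies of $V$ and $m+n$ copies of $V^{*}$ arranged in some order.

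Here the hypothesis $\chi\chi^{*}=\chi^{*}\chi$ comes in: in the character ring it implies that any product of $m+n$ copies of $\chi$ and $m+n$ copies of $\chi^{*}$, taken in any order, equals $(\chi\chi^{*})^{m+n}$. Because in the semisimple category $\Cc$ an object is determined up to isomorphism by its character, the corresponding tensor product of $V$'s and $V^{*}$'s is isomorphic to $(V\otimes V^{*})^{\otimes(m+n)}$. Therefore $W\otimes W^{*}$ embeds as a subobject of $(V\otimes V^{*})^{\otimes(m+n)}$, and so lies in $\Cc_0$. This proves $\Cc_{\ad}\subseteq\Cc_0$, and hence $H_{\coad}=k[C\mathcal S(C)]$.

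The only delicate step is the passage, in the previous paragraph, from equality of characters to isomorphism of objects, and the justification that rearranging the tensor factors $V$ and $V^{*}$ is permitted; both of these rest on the commutation hypothesis $\chi\chi^{*}=\chi^{*}\chi$ together with semisimplicity. I expect this to be the only subtle point, the rest being a direct application of the definition of $\Cc_{\ad}$ and of the faithfulness of $\chi$.
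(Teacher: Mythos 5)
Your argument is correct and is essentially the paper's own proof: both establish the easy inclusion $k[C\mathcal S(C)]\subseteq H_{\coad}$ by definition of the adjoint subcategory, then use faithfulness to place each simple $\lambda$ inside some $\chi^m(\chi^*)^n$ and the hypothesis $\chi\chi^*=\chi^*\chi$ (together with $\chi^{**}=\chi$, valid by cosemisimplicity) to locate $\lambda\lambda^*$ inside $(\chi\chi^*)^{m+n}$. The only difference is cosmetic: you phrase the rearrangement in the comodule category, while the paper works directly with characters in the Grothendieck ring, which sidesteps the "characters determine objects" point you flag as delicate.
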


\begin{proof} We have $k[C\mathcal S(C)] \subseteq H_{\coad}$.
The faithfulness assumption on $\chi$ means that $H = k[C]$ and
for every irreducible character $\lambda$ of $H$, using the
assumption $\chi \chi^* = \chi^* \chi$, there exist nonnegative
integers $m$ and $p$ such that $\lambda$ appears with positive
multiplicity in $\chi^m(\chi^*)^p$.
Hence $\lambda^*$
appears with positive multiplicity in $\chi^p(\chi^*)^m$ (since
$H$ is cosemisimple, we have $\chi^{**}=\chi$). Thus
$\lambda\lambda^*$ appears with positive multiplicity in
$\chi^{m+p}(\chi^*)^{m+p}$. Then $\lambda \lambda^* \in
k[C\mathcal S(C)]$. Since this holds for any irreducible character
$\lambda$, we get $H_{\coad} \subseteq k[C\mathcal S(C)]$ and the
lemma follows.
\end{proof}

\section{On a theorem of Nichols and Richmond}\label{nr-th}

In this section we aim to give a proof of our most general  result on
finite dimensional cosemisimple Hopf algebras with a simple
comodule of dimension $2$, strengthening, in this context, the
Nichols-Richmond Theorem \cite{NR}.
The proof is based on the algebraic generalization of some
well known results in the theory of compact quantum groups
\cite{ba1,ba2}.

\subsection{Comodule algebras of dimension $4$} Let $V$ be a finite dimensional $H$-comodule. Then
$\End V \simeq V \otimes V^*$ is an $H$-comodule algebra with the
diagonal coaction of $H$ and multiplication given by composition.

\begin{remark}\label{traza} Under the canonical ($H$-colinear) isomorphism $V \otimes V^*
\simeq \End V$, the trace map $\Tr: \End V \to k$ becomes
identified with the evaluation map $V \otimes V^* \to k$, $v
\otimes f \to f(v)$. If $H$ is involutory, that is, if $\mathcal
S^2 = \id$, then $\Tr: \End V \to k$ is an $H$-comodule map.

\medbreak Suppose $H$ is cosemisimple and $\mathcal S^2 = \id$.
Let $V \ncong k$ be a simple (hence finite dimensional)
$H$-comodule, and decompose $V \otimes V^* = k \oplus W$, where $W
\subseteq V$ is an $H$-subcomodule. Then we have $W = \ker \Tr$.

Indeed, the restriction  $\Tr\vert_{W}$ is an $H$-comodule map.
Therefore $\Tr\vert_{W} = 0$, since $\Hom^H(W, k) = 0$, $V \ncong
k$ being simple. \end{remark}

The following result is related to the computation
of the compact quantum symmetry group of the $2 \times 2$ complex matrix
algebra given in \cite{ba2}.

\begin{proposition}\label{coefA} Let $H$ be a Hopf algebra and let
$A$ be an $H$-comodule algebra such that $A \simeq M_2(k)$. Assume
that the trace map $\Tr: A \to k$ is an $H$-comodule map. Then the
subalgebra $k[C_A]$ generated by the subcoalgebra $C_A$ of matrix
coefficients of $A$ is a commutative Hopf subalgebra of $H$.

We have moreover $k[C_A] \simeq \Oo[\Gamma]$, where $\Gamma$ is an
algebraic subgroup of $\PSL_2(k)$. \end{proposition}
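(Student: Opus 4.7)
The plan is to exhibit a surjective Hopf algebra homomorphism $\Oo[\PSL_2(k)] \to k[C_A]$; once this is in hand, commutativity of $k[C_A]$ and its description as $\Oo[\Gamma]$ for a closed subgroup $\Gamma \subseteq \PSL_2(k)$ will follow at once, since the image of any Hopf algebra map is a Hopf subalgebra and the image of a commutative Hopf algebra is commutative of the form $\Oo[\Gamma]$.

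First I would exploit the two $H$-colinear pieces of structure on $A = M_2(k)$: the trace and the multiplication. By hypothesis $\Tr$ is colinear, so $A = k \cdot 1 \oplus A_0$ decomposes as $H$-comodules with $A_0 = \ker \Tr$ of dimension three; because $\rho(1) = 1 \otimes 1$, this yields $k[C_A] = k[C_{A_0}]$ and reduces the problem to the coaction on $A_0$. The $H$-colinearity of the multiplication map $m : A \otimes A \to A$ then ensures that the symmetric bilinear form $q(x,y) = \Tr(xy)$ is $H$-invariant and non-degenerate on $A_0$, and that the Lie bracket $[x,y] = xy - yx$ is an $H$-colinear map $A_0 \otimes A_0 \to A_0$. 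For traceless $2 \times 2$ matrices one has the identity $xy = \frac{1}{2} q(x,y) \cdot 1 + \frac{1}{2}[x,y]$, so $q$ and the bracket together recover the full associative product and realize $A_0$ as an $H$-equivariant copy of the quadratic Lie algebra $\mathfrak{sl}_2(k)$.

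Next I would fix a basis $X_1, X_2, X_3$ of $A_0$, write $\rho(X_i) = \sum_j X_j \otimes t_{ji}$, and collect the matrix coefficients into a matrix $T = (t_{ji}) \in M_3(H)$, so that $C_{A_0}$ is spanned by the entries $t_{ji}$. Invariance of $q$ translates into the orthogonality relations $T^t Q T = Q = T Q T^t$ in $M_3(H)$, where $Q = (q(X_i, X_j))$, and expanding the equality $\rho(X_i X_j) = \rho(X_i)\rho(X_j)$ via $X_i X_j = \frac{1}{2} q(X_i, X_j) \cdot 1 + \frac{1}{2}[X_i, X_j]$ produces further cubic relations among the $t_{ji}$ encoding preservation of the structure constants of $\mathfrak{sl}_2$. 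Classically the adjoint representation identifies $\PSL_2(k)$ with the stabilizer in $\GL(A_0)$ of both $q$ and the bracket, and the algebraic counterpart of the compact-quantum-group computation in \cite{ba1, ba2} is that these noncommutative orthogonality and bracket relations in fact force the entries $t_{ji}$ to commute pairwise and to satisfy precisely the defining relations of $\Oo[\PSL_2(k)]$. This supplies the desired Hopf algebra map $\Oo[\PSL_2(k)] \to H$ whose image is $k[C_A]$, and the proposition follows.

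The hard part will be the third step: extracting pairwise commutativity of the entries $t_{ji}$ from the multiplicativity of $\rho$. Invariance of the quadratic form $q$ alone only cuts out a noncommutative quantum orthogonal group, and the collapse to the classical $\Oo[\PSL_2(k)]$ comes entirely from the extra rigidity supplied by the Lie bracket, a feature special to the three-dimensional orthogonal representation of $\mathfrak{sl}_2$. Verifying that the cubic identities really do suffice is the technical heart of the argument and corresponds in the compact setting to Banica's analysis of the quantum automorphism group of $(M_2(\C), \Tr)$.
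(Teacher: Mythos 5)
Your overall strategy coincides with the paper's: decompose $A = k\cdot 1 \oplus \ker\Tr$ using colinearity of the trace, observe that the form $\Tr(xy)$ and the multiplication give $H$-invariant structure on the three-dimensional piece, derive relations on the matrix coefficients $t_{ji}$, and conclude with a surjective Hopf algebra map $\Oo[\PSL_2(k)]\to k[C_A]$. The framing via $xy=\tfrac12 q(x,y)\cdot 1+\tfrac12[x,y]$ is a basis-free version of what the paper does with the quaternion basis. (A small slip: the relations coming from multiplicativity of $\rho$ are quadratic, not cubic --- commutators of two generators equal $\pm$ a single generator.)

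However, there is a genuine gap exactly where you flag it: you assert, citing Banica, that the orthogonality and bracket relations ``in fact force the entries $t_{ji}$ to commute pairwise,'' and you never derive this. That deduction is the entire content of the proposition --- without it you have only mapped a noncommutative quantum orthogonal-type algebra onto $k[C_A]$ --- so the proof as written is incomplete at its decisive step. The paper closes it by a short concrete computation in the quaternion basis $e_1,e_2,e_3$ of $\ker\Tr$ (with $e_i^2=-e_0$, $e_1e_2=e_3$, etc.). Writing $\rho(e_i)=\sum_j e_j\otimes x_{ji}$: (a) expanding $\rho(e_i)^2=-e_0\otimes 1$ and reading off the coefficients of $e_1,e_2,e_3$ forces $x_{li}x_{ti}=x_{ti}x_{li}$, i.e.\ entries in a common \emph{column} commute; (b) applying the antipode, which sends $x_{ij}\mapsto x_{ji}$ by orthogonality and is an algebra antimorphism, transfers this to common \emph{rows}; (c) the remaining products $e_ie_j=\pm e_{\langle ij\rangle}$ give $x_{li}x_{tj}-x_{tj}x_{li}=\pm x_{\langle lt\rangle\langle ij\rangle}$, so an entry $x_{lj}$ lying in neither the row nor the column of $x_{ti}$ can be written as $\pm[x_{\langle lt\rangle i},\,x_{t\langle ij\rangle}]$, a commutator of one element sharing a column with $x_{ti}$ and one sharing a row with $x_{ti}$; both factors commute with $x_{ti}$ by (a) and (b), hence so does $x_{lj}$. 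You would need to supply this (or an equivalent basis-free manipulation of your $q$- and bracket-relations) for your argument to stand.
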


\begin{proof} Let $\rho: A \to A \otimes H$ be the $H$-comodule
structure on $A$. Consider the basis of $A$ consisting of the unit
quaternions
\begin{equation*} e_0 = \begin{pmatrix}1 & 0\\0 & 1\end{pmatrix},
e_1 = \begin{pmatrix}-\sqrt{-1} & 0\\0 & \sqrt{-1}\end{pmatrix},
e_2 = \begin{pmatrix}0 & -1\\1 & 0\end{pmatrix}, e_3 =
\begin{pmatrix}0 & \sqrt{-1}\\ \sqrt{-1} & 0\end{pmatrix}.
\end{equation*} These satisfy the following multiplication rules:
\begin{equation}\label{quater}e_i^2 = -e_0, \; 1\leq i \leq 3, \quad e_1e_2 = e_3, \quad e_2e_3 = e_1,
\quad e_3e_1 = e_2. \end{equation} Moreover, $\{ e_1, e_2, e_3 \}$
forms a basis of $W$, the kernel of the trace map $\Tr: M_2(k) \to k$.
Since by assumption, this is an $H$-subcomodule of $A$, then we
can write $\rho(e_0) = e_0 \otimes 1$ and  $$\rho(e_i) =
\sum_{j=1}^3 e_j \otimes x_{ji},$$ for all $1\leq i \leq 3$, where
$x_{ij} \in H$. In particular, $x_{ij}$, $1 \leq i, j \leq 3$,
generate the subalgebra $k[C_A]$. We have moreover,
\begin{equation}\label{delta}\Delta(x_{ij}) = \sum_lx_{il} \otimes x_{lj}, \quad 1\leq i,
j \leq 3.\end{equation}
The map $W \otimes W \to k$, $e_i\otimes e_j \mapsto \delta_{ij}$, is $H$-colinear  since
it is the composition of $H$-colinear maps
\begin{equation*}\begin{CD}
W \otimes W \subset M_2(k) \otimes M_2(k) @>{\rm mult}>> M_2(k)
@>-\frac{1}{2} {\rm Tr}>> k.
\end{CD}
\end{equation*}
Hence we see that the matrix $(x_{ij})$ is orthogonal, and we have $S(x_{ij})=x_{ji}$, $1\leq i,
j \leq 3$ (see Section 4 for details).

Since $\rho$ is an algebra map, it must preserve the relations
\eqref{quater}. The relations $\rho(e_i)^2 = - e_0 \otimes 1$,
$1\leq i \leq 3$, amount to
$$\label{rels1} x_{li}x_{ti}  =
x_{ti}x_{li}$$ for all $i = 1, 2, 3$. This
means that the matrix coefficients in a given
column commute pairwise. Using the antipode, we get
$$\label{rels2} x_{li}x_{lj} =x_{lj}x_{li}$$
and thus the  matrix coefficients in a given
row commute pairwise.

The remaining relations \eqref{quater} amount to
\begin{align} \label{rels}& \varepsilon_{ij} \varepsilon_{kl}
(x_{ki}x_{lj} - x_{li}x_{kj}) =
x_{\langle kl \rangle, \langle ij\rangle},  \end{align} where
$1\leq i \neq j \leq 3$, $1\leq k \neq l \leq 3$. Here for $k \not=l$, $\langle
kl\rangle \in \{ 1, 2, 3 \}$ is such that $\{ k, l \} \cup
\{\langle lt\rangle\} = \{ 1, 2, 3 \}$,  and $\varepsilon_{kl}\in \{\pm 1\}$ is defined
by $e_ke_l = \varepsilon_{kl}e_{\langle kl\rangle}$, with $\varepsilon_{kl}=-\varepsilon_{lk}$.

Using the antipode relations \eqref{rels} yield
\begin{align}
\label{rels1.2} &
\varepsilon_{ij} \varepsilon_{kl}
(x_{lj}x_{ki} - x_{li}x_{kj}) =
x_{\langle kl \rangle, \langle ij\rangle}.
\end{align}
Comparing \eqref{rels} and \eqref{rels1.2} we get the  relations between commutators
$$[x_{ki},x_{lj}] = [x_{li},x_{kj}].$$
Applying the antipode this gives
$$[x_{ki},x_{lj}]= [x_{kj},x_{li}],$$
and finally
$$[x_{ki},x_{lj}] = - [x_{lj},x_{kj}]=0,$$
for $k \not = l$ and $i \not =j$.
We conclude that $k[C_A]$ is commutative.




\medbreak Furthermore, the orthogonality of the matrix $(x_{ij})$
and equations \eqref{rels}
for the pairwise commuting elements $x_{ij}$,
give the defining relations for $\Oo[\SO_3(k)] \simeq
\Oo[\PSL_2(k)]$, therefore determining a unique algebra map
\begin{equation}\label{surj}f: \Oo[\PSL_2(k)] \to k[x_{ij}|\, 1\leq i, j \leq 3] =
k[C_A].\end{equation} Because of formula \eqref{delta}, this map
is a bialgebra map, whence a Hopf algebra map. Thus $k[C_A]$ is
isomorphic to $\Oo[\Gamma]$, where $\Gamma \subseteq \PSL_2(k)$ is
an algebraic subgroup, as claimed.
\end{proof}

\subsection{The Hopf algebra $H(n)$} Let $n \geq 2$.
Let  $H(n)$  be the algebra presented by generators $a_{ij}$,
$b_{ij}$, $1\leq i, j \leq n$, and relations
\begin{equation*}
A{}^tB = I = {}^tBA, \quad B{}^tA = I = {}^tAB,
\end{equation*}
where $A$ and $B$ denote, respectively, the matrices $(a_{ij})$
and $(b_{ij})$, and $I$ is the $n\times n$ identity matrix.

The algebra $H(n)$ has a Hopf algebra structure with
multiplication, counit and antipode given, respectively, by
\begin{align*}\Delta(a_{ij}) & =
\sum_la_{il} \otimes a_{lj}, \quad \epsilon(a_{ij}) = \delta_{ij},
\quad \mathcal S(a_{ij}) = b_{ji}, \\
\Delta(b_{ij}) & = \sum_lb_{il} \otimes b_{lj}, \quad
\epsilon(b_{ij}) = \delta_{ij}, \quad \mathcal S(b_{ij}) = a_{ji}.
\end{align*}
The Hopf algebra $H(n)$ coincides with the universal cosovereign
Hopf algebra $H(I_n)$, introduced in \cite{univ-cosov}
(when $k= \C$, it is the Hopf $*$-algebra associated with Wang's universal compact
quantum group in \cite{wa1}). It is
cosemisimple by \cite{bic3}, with $\mathcal S^2 = \id$. The $n$-dimensional vector
space $V$ with basis $(e_i)_{1\leq i\leq n}$ has a natural
$H(n)$-comodule structure $\alpha: V \to V\otimes H(n)$, where
$\alpha(e_i)=\sum_{j=1}^n e_j\otimes a_{ji}$, $1\leq i\leq n$.

Moreover,  for any Hopf algebra $H$ with $\mathcal S^2 = \id$, and
$H$-comodule $\alpha: V \to V \otimes H$, of dimension $\dim V =
n$, there exist an $H(n)$-comodule structure $\alpha': V \to V
\otimes H(n)$, and a unique Hopf algebra map $f: H(n)\to H$ such
that $(\id \otimes f)\alpha' = \alpha$.

\medbreak Consider the Hopf subalgebra $H_+(2)$ of $H(2)$
generated, as an algebra, by all products $a_{ij}b_{tl}$, $1\leq i,j,
t, l \leq 2$. The following result is the algebraic generalization of a result
proved in \cite{ba1} (Theorem 5)
in the compact quantum group framework. This proof given here is simpler.

\begin{corollary} $H_+(2)$  is a commutative Hopf subalgebra
of $H(2)$ isomorphic to $\Oo[\PSL_2(k)]$.
\end{corollary}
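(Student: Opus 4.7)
The plan is to apply Proposition \ref{coefA} to the canonical $H(2)$-comodule algebra $A = \End V \simeq V \otimes V^*$, where $V$ is the standard $2$-dimensional $H(2)$-comodule with basis $(e_1, e_2)$ and coaction $\alpha(e_i) = \sum_j e_j \otimes a_{ji}$. Since $H(2)$ is cosemisimple with $\mathcal{S}^2 = \id$, Remark \ref{traza} ensures that $\Tr : A \to k$ is $H(2)$-colinear, so Proposition \ref{coefA} applies and yields a surjective Hopf algebra map $f: \Oo[\PSL_2(k)] \to k[C_A]$ onto a commutative Hopf subalgebra of $H(2)$.

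Next I identify $k[C_A]$ with $H_+(2)$. Using $\mathcal{S}(a_{ij}) = b_{ji}$, the dual comodule $V^*$ carries the coaction $e^i \mapsto \sum_j e^j \otimes b_{ji}$. Thus the matrix coefficients of the diagonal $H(2)$-coaction on $V \otimes V^*$ with respect to the basis $\{e_i \otimes e^j\}$ are exactly the products $a_{ki}b_{jl}$, $1 \leq i,j,k,l \leq 2$. Consequently $C_A$ is linearly spanned by these products, and $k[C_A] = H_+(2)$ by the very definition of the latter.

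The main remaining obstacle is to upgrade the surjection $f$ to an isomorphism. For this I invoke the universal property of $H(2)$ applied to the standard $2$-dimensional $\SL_2(k)$-comodule, which produces a surjective Hopf algebra map $g: H(2) \to \Oo[\SL_2(k)]$ sending $a_{ij}$ to the standard coordinates $x_{ij}$ and $b_{ij}$ to $\mathcal{S}(x_{ji})$. Its restriction $g|_{H_+(2)}$ lands in the subalgebra generated by all products $x_{ij}\mathcal{S}(x_{tl})$, which is the fixed subring $\Oo[\SL_2(k)]^{\{\pm I\}} = \Oo[\PSL_2(k)]$, and is surjective onto it. The composition $(g|_{H_+(2)}) \circ f$ is therefore a surjective Hopf algebra endomorphism of the finitely generated commutative algebra $\Oo[\PSL_2(k)]$, and hence an isomorphism, since any surjective endomorphism of a Noetherian commutative ring is injective. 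This forces $f$ to be injective, ruling out proper closed subgroups of $\PSL_2(k)$ as candidates for $\Gamma$, and the corollary follows.
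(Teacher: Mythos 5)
Your proposal is correct and follows essentially the same route as the paper: apply Proposition \ref{coefA} to $\End V$ to get the surjection $f:\Oo[\PSL_2(k)]\to k[C_A]=H_+(2)$, then use the universal property of $H(2)$ to produce $g:H(2)\to\Oo[\SL_2(k)]$ restricting to $H_+(2)\to\Oo[\PSL_2(k)]$. The only (minor) difference is at the last step: the paper checks directly that $g$ inverts $f$, whereas you deduce injectivity of $f$ from the fact that the surjective endomorphism $g\circ f$ of the Noetherian commutative algebra $\Oo[\PSL_2(k)]$ must be injective — a clean way to avoid verifying $g\circ f=\id$ on generators.
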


\begin{proof} Let $V$ be the standard $2$-dimensional $H(2)$-comodule.
Keep the notation in the proof of Proposition \ref{coefA}. The
subalgebra $H_+(2)$ coincides with the subalgebra $k[C_A]$. We
have  $\mathcal S^2 = \id$ in $H(2)$. Consider the surjective Hopf
algebra map $f: \Oo[\PSL_2(k)] \to H_+(2)$ in \eqref{surj}.

On the other hand, there is a natural Hopf algebra inclusion
$\Oo[\PSL_2(k)] \subseteq \Oo[\SL_2(k)]$. The universal property
of $H(2)$ in \cite[Theorem 4.2]{univ-cosov} gives a surjective
Hopf algebra map $H(2) \to \Oo[\SL_2(k)]$ that induces by
restriction a Hopf algebra map $g: H_+(2) \to \Oo[\PSL_2(k)]$. The
map $g$ defines an inverse of $f$. Thus $f$ is an isomorphism and
we are done.
\end{proof}

\subsection{The finite dimensional case}\label{subgrupos} Let $H$ be a finite dimensional
Hopf algebra. Since $k$ has characteristic zero, then $H$
satisfies $\mathcal S^2 = \id$ if and only if $H$ is cosemisimple,
if and only if $H$ is semisimple \cite{larson-radford}.

We shall use these facts together with Proposition \ref{coefA} to
prove a refinement of \cite[Theorem 11]{NR} in the finite
dimensional context. See Theorem \ref{nr-ref}.

\medbreak Recall that a finite subgroup of $\PSL_2(k)$, called a
\emph{polyhedral group}, is up to isomorphisms, one in following
list.

(i) The cyclic group  $\Z_n$, $n \geq 1$;

(ii) the dihedral group  $D_n$, of order $2n$, $n \geq 2$;

(iii) the group $T$ of symmetries of the regular tetrahedra, of
order $12$;

(iv) the group $O$ of symmetries of the regular octahedra, of
order $24$;

(v) the group $I$ of symmetries of the regular icosahedra, of
order $60$.

\medbreak We have $T \simeq \mathbb A_4$, $O \simeq \mathbb S_4$
and $I \simeq \mathbb A_5$.

\medbreak The Hopf algebra $k^{\Z_n} \simeq k\Z_n$ is
cocommutative. For the other subgroups $\Gamma$, the coalgebra
types of $k^{\Gamma}$ are the following:
$$k^{D_n}: \begin{cases}(1, 2; 2, (n-1)/2), \quad $n$ \text{ odd},\\ (1, 4; 2, (n/2)-1), \quad $n$ \text{ even},\end{cases}$$
$$k^{\mathbb A_4}: (1, 3; 3, 1), \quad k^{\mathbb S_4}: (1, 2; 2, 1; 3, 2), \quad k^{\mathbb A_5}: (1, 1; 3, 2; 4, 1; 5, 1).$$

\begin{proof}[Proof of Theorem \ref{nr-ref}]
Let $V$ be the simple $2$-dimensional comodule
corresponding to $C$. Then $A = V\otimes V^* \simeq M_2(k)$ is an
$H$-comodule algebra. Since $H$ is finite dimensional and
cosemisimple, then $\mathcal S^2 = \id$. By Remark \ref{traza},
the trace map $\Tr: A \to k$ is $H$-colinear.

Applying Proposition \ref{coefA}, we get that $k[C_A] \subseteq H$
is a commutative Hopf subalgebra of $H$ isomorphic to
$k^{\Gamma}$, where $\Gamma$ is a finite subgroup of $\PSL_2(k)$.
Since $A = V\otimes V^*$ as $H$-comodules, the subcoalgebra $C_A$
coincides with $C \mathcal S(C)$.

It remains to show that $\Gamma$ is a non cyclic subgroup, which
will ensure, in view of the above list, that it has even order. So
assume that $\Gamma = \langle g \rangle$, where $g \in \PSL_2(k)$
has finite order. Let $M \in \SL_2(k)$ be a matrix representing
$g$: it follows from the fact that $g$ has finite order that $M$
is diagonalisable, and hence the conjugation action of $M$ on $A =
V\otimes V^*$ has at least two trivial subrepresentations, which
contradicts the fact that $V$ is a simple $H$-comodule (since the
category of $k[C_A]$-comodules is a full subcategory of the
category of $H$-comodules).

Consider now the subgroup $G[\chi]$, where $\chi \in C$ is the
character of $V$.  Then we see, by examining the decomposition of $V \otimes V^*$ into a direct sum of irreducibles,
that $|G[\chi]| = 1, 2$ or $4$. More precisely we have

\begin{enumerate}
\item $|G[\chi]| = 1 \iff V\otimes V^* \simeq  k\oplus W$, where $W$ is a simple $k^\Gamma$-comodule of dimension $3$.
\item $|G[\chi]| = 2 \iff V\otimes V^* \simeq  k\oplus kx \oplus Z$, where $x \in G(H)$
has order $2$ (in fact $x \in G[\chi]$) and $Z$ is a simple $k^\Gamma$-comodule of dimension $2$.
\item $|G[\chi]| = 4 \iff V\otimes V^* \simeq  k\oplus kx \oplus ky \oplus kz$ where
$x,y, z \in G(H)$ (and then $G[\chi]= \{1,x,y,z\}$).
\end{enumerate}
The reader will easily check the details (see \cite{NR}, Theorem
10, for this kind of reasoning). The proof of the last statement
of Theorem 1.1 now follows from the coalgebra types of $k^\Gamma$,
listed above. \end{proof}





\begin{remark} Keep the notation in Theorem \ref{nr-ref}.
Assume that $\chi\chi^* = 1 + \lambda$, where $\lambda$ is an
irreducible character of degree $3$. This means that possibility
(1) in \cite[Theorem 11]{NR} does not hold.

Then $B = k[C\mathcal S(C)]$ coincides with the subalgebra
generated by the subcoalgebra $C_{\lambda}$ containing $\lambda$,
of dimension  $12$, $24$ or $60$. Further, assume that $\dim
k[C_{\lambda}] = 24$, that is, $k[C_{\lambda}] \simeq k^{\mathbb
S_4}$. In view of the fusion rules of representations of $\mathbb
S_4$, $\chi \notin k[C_{\lambda}]$. The coalgebra types in the
remaining cases, imply that $\chi \notin k[C_{\lambda}]$ for any
possible isomorphism class of $k[C_{\lambda}]$. In particular,
$k[C_{\lambda}]$ is a proper Hopf subalgebra of $H$.
\end{remark}

\subsection{A special situation}
We use the previous considerations to attach an exact
sequence to a cosemisimple Hopf algebra having a faithful comodule of dimension $2$, in a special
situation. The precise result is the following one.

Recall from \cite[4.1]{ksz} that the order of an $H$-comodule $V$
is the smallest natural number $n$ such that $V^{\otimes n}$
contains the trivial $H$-comodule. For instance, if $V$ is
self-dual and nontrivial, then the order of $V$ equals $2$.

\begin{theorem}\label{shortspecial}
Let $H$ be a cosemisimple finite dimensional Hopf
algebra having a faithful irreducible comodule V of dimension $2$, and assume that $V \otimes V^*\simeq V^*
\otimes V$.
Then there is a cocentral abelian exact
sequence of Hopf algebras
$$k\to k^\Gamma \to H \to  k\Z_m\to k$$
for  a polyedral group $\Gamma$ of even order and $m\geq 1$, such that $m$ divides the order of $V$.
\end{theorem}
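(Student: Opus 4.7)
The plan is to combine Theorem \ref{nr-ref} with the universal cocentral exact sequence \eqref{univcocentral} for the fusion category $\Cc' = H-\comod$, after using the hypothesis $V\otimes V^* \simeq V^*\otimes V$ to identify $H_{\coad}$ explicitly as $k^\Gamma$.

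First, let $\chi\in C$ denote the character of $V$. The isomorphism $V\otimes V^*\simeq V^*\otimes V$ translates into the relation $\chi\chi^* = \chi^*\chi$, and the faithfulness of $V$ means that $H = k[C]$. Hence Lemma \ref{chichi*} applies and gives $H_{\coad} = k[C\mathcal S(C)]$. By Theorem \ref{nr-ref}, this Hopf subalgebra is isomorphic to $k^\Gamma$ for some non cyclic polyhedral group $\Gamma$ of even order. Substituting this identification in the universal cocentral exact sequence \eqref{univcocentral} yields a cocentral exact sequence
$$k \to k^\Gamma \to H \to kU(\Cc') \to k,$$
which is automatically abelian since $k^\Gamma$ is commutative and $kU(\Cc')$ is cocommutative.

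Next I would show that $U(\Cc')$ is cyclic. Let $g \in U(\Cc')$ be the degree of the simple comodule $V$ in the universal grading of $\Cc'$. Faithfulness of $V$ combined with cosemisimplicity of $H$ implies that every irreducible $H$-comodule appears as a subcomodule of some iterated mixed tensor power $V^{\otimes a}\otimes (V^*)^{\otimes b}$; any such subcomodule lies in the homogeneous component of degree $g^{a-b}$, so every element of $U(\Cc')$ is a power of $g$. Thus $U(\Cc') = \langle g \rangle \simeq \Z_m$ where $m = \ord(g)$. Finally, let $n$ be the order of $V$, so that $V^{\otimes n}$ contains the trivial comodule. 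Since every simple summand of $V^{\otimes n}$ sits in the component of degree $g^n$, while the trivial comodule has degree $e$, we must have $g^n = e$ and therefore $m$ divides $n$.

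The main subtlety is the identification $U(\Cc')=\langle \deg V\rangle$: what is really used is that, for $H$ cosemisimple, faithfulness of $V$ amounts to the statement that the smallest Hopf subalgebra containing the matrix coefficients of $V$ is all of $H$, which by cosemisimplicity forces every simple comodule to be a summand of some mixed tensor power of $V$ and $V^*$. Once this is in hand, the grading argument is purely formal, and the combination with Theorem \ref{nr-ref} and Lemma \ref{chichi*} packages everything into the desired exact sequence.
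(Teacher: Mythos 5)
Your proposal is correct, and its skeleton coincides with the paper's: both identify $H_{\coad}=k[C\mathcal S(C)]$ via Lemma \ref{chichi*} (this is exactly where the hypothesis $V\otimes V^*\simeq V^*\otimes V$ and faithfulness enter), invoke Theorem \ref{nr-ref} to get $k[C\mathcal S(C)]\simeq k^\Gamma$ with $\Gamma$ a non-cyclic, hence even order, subgroup of $\PSL_2(k)$, and then read off the cocentral sequence from \eqref{univcocentral}. Where you genuinely diverge is in the last two steps. The paper proves that the quotient group $M$ is cyclic, and that $m$ divides $\ord(V)$, by explicit computation in $H/(H_+)^+H$ using the generators $a_{ij},b_{ij}$ inherited from the surjection $H(2)\to H$: it shows the off-diagonal $a_{ij}$ vanish and the diagonal ones become a single group-like $a$ generating $M$, and then evaluates an invariant vector of $V^{\otimes N}$ to get $a^N=1$. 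You instead argue formally with the universal grading: faithfulness forces every simple comodule to occur in a tensor word in $V$ and $V^*$, so every degree is a power of $g=\deg V$, giving $U(\Cc')=\langle g\rangle\simeq\Z_m$; and since $V^{\otimes n}$ is homogeneous of degree $g^n$ and contains the unit object, $g^n=e$, so $m\mid n$. Both arguments are sound. Yours is cleaner and avoids coordinates; the paper's explicit version has the side benefit of identifying the generator of $\Z_m$ concretely as the common image of the $a_{ii}$, which is the form reused elsewhere (e.g.\ in Proposition \ref{exacta-gene} and in the description of the map $\zeta$ in the $\Z_2$ case). One cosmetic point: simple comodules need only occur in arbitrary words in $V$ and $V^*$, not necessarily in the ordered form $V^{\otimes a}\otimes(V^*)^{\otimes b}$, but since any such word still has degree $g^{a-b}$ (and in fact, with the paper's definition of faithfulness, $H=k[C]=\sum_a C^a$ already suffices), this does not affect your argument.
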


\begin{proof}
The universal property of $H(2)$ yields a surjective Hopf algebra
map $H(2) \to H$. Denote by $H_+$ the image of $H_+(2)$ in $H$.
We know from Theorem \ref{nr-ref} and its proof that $H_+ \simeq k^\Gamma$
  for  a polyedral group $\Gamma$ of even order.
If $C$ is the coalgebra corresponding to $V$, we have
$H_{\coad} = k[C\mathcal S(C)]=H_+$ by Lemma \ref{chichi*}, and hence
the exact sequence \eqref{univcocentral} gives a cocentral exact sequence
$$k\to k^\Gamma \to H \to  kM\to k$$
for a (finite) group $M$. It remains to check that the group $M$ is cyclic.
We have $kM \simeq H/(H_+)^+H$, and hence it is enough
to check that $H/(H_+)^+H$ is the group algebra of a cyclic group.
For notational simplicity, the elements $a_{ij},b_{ij}$ of $H(2)$ are still denoted
by the same symbol in $H/(H_+)^+H$. In $H/(H_+)^+H$, we have
$$a_{ii}b_{jj}=1, \quad a_{ij}b_{kl}=0 \ {\rm if} \ i\not= j \ {\rm or} \ k \not= l.$$
Hence if $i \not = j$, we have $a_{ij}=a_{ij}a_{ii}b_{jj}=0$, and
the elements $a_{ii}$ are group-like, and they generate $M$. We
have moreover $a_{ii} = a_{ii}b_{jj}a_{jj}=a_{jj}$, and hence $M$
is cyclic.

\medbreak Let now $N$ be the order of $V$. Let $v_1, v_2$ be a
basis of $V$ such that $\rho(v_i) = \sum_jv_j\otimes a_{ji}$.
Since $V^{\otimes N}$ contains the trivial corepresentation, there
exists $v = \sum_{i_1, \dots, i_N}\lambda_{i_1, \dots, i_N}
v_{i_1} \otimes \dots \otimes v_{i_N}$  in $V^{\otimes N}$ such
that $v\neq 0$ and $$\rho(v) = \sum_{i_1, \dots, i_N, j_1, \dots,
j_N}\lambda_{i_1, \dots, i_N} \, v_{j_1} \otimes \dots \otimes
v_{j_N} \otimes a_{j_1i_1} \dots a_{j_Ni_N} = v \otimes 1.$$ Then,
in $V^{\otimes N} \otimes H/(H_+)^+H$, we have
$$v \otimes 1 = \sum_{i_1, \dots, i_N}\lambda_{i_1, \dots, i_N} \, v_{i_1} \otimes \dots \otimes
v_{i_N} \otimes a^N = v \otimes a^N,$$ where $a = a_{i_1i_1} =
\dots = a_{i_Ni_N}$. Hence $a^N = 1$, and since $m$ equals the
order of $a$, then $m$ divides $N$, as claimed.
\end{proof}

\begin{remark}
The assumption of the previous theorem holds if $H$ is coquasitriangular, or
if $V\simeq V^*$. The self-dual situation is studied in the next sections;
we have in this case $m\leq 2$.
\end{remark}

\begin{corollary}Let $H$ and $V$ be as in Theorem \ref{shortspecial}. Then for all
irreducible $H$-module $W$, the dimension of $W$ divides the order of $V$.\end{corollary}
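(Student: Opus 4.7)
The plan is to combine Theorem \ref{shortspecial} with a Clifford--Mackey analysis of irreducible modules over an abelian Hopf algebra extension. Set $N=\ord V$. By Theorem \ref{shortspecial}, $H$ fits into a cocentral abelian exact sequence
\[
k \to k^\Gamma \to H \to k\Z_m \to k
\]
with $m\mid N$, so it suffices to show that every irreducible $H$-module $W$ satisfies $\dim W \mid m$. A direct inspection of the bicrossed product formulas \eqref{mult}, \eqref{comult} and \eqref{antipode} shows that the cocentrality of the projection $H\to k\Z_m$ is equivalent to the triviality of the action $\vartriangleright$, so $H \simeq k^{\Gamma}\,{}^{\tau}\#_{\sigma} k\Z_m$ with only $\vartriangleleft$, $\sigma$ and $\tau$ possibly nontrivial.

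Given $W$, I would restrict it to the commutative normal Hopf subalgebra $k^\Gamma$, obtaining a decomposition $W|_{k^\Gamma}=\bigoplus_{\gamma\in\Gamma}W_\gamma$ into one-dimensional isotypic components for the characters $\gamma$ of $k^\Gamma$. Clifford--Mackey theory for abelian Hopf algebra extensions then yields the following standard description: the support $\{\gamma : W_\gamma\neq 0\}$ is a single $\Z_m$-orbit $\mathcal{O}\subseteq\Gamma$ for the action induced by $\vartriangleleft$; all components $W_\gamma$, $\gamma\in\mathcal{O}$, have a common dimension $d$; and $d$ is the dimension of some irreducible projective representation of the stabilizer $(\Z_m)_\gamma\leq \Z_m$ with respect to a $2$-cocycle built from $\tau$.

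Since $\Z_m$ is cyclic, the stabilizer $(\Z_m)_\gamma$ is cyclic, and cyclic groups have trivial Schur multiplier; hence every irreducible projective representation of $(\Z_m)_\gamma$ is one-dimensional, forcing $d=1$. Consequently
\[
\dim W \;=\; |\mathcal{O}| \;=\; [\Z_m : (\Z_m)_\gamma]
\]
divides $m$, and $m$ divides $N$ by Theorem \ref{shortspecial}. The main obstacle is providing (or cleanly citing) the Clifford--Mackey description of irreducible modules for an abelian Hopf algebra extension in the form used above. A self-contained substitute would be to analyze by hand the induced module $H\otimes_{k^\Gamma}k_\gamma$ via the explicit multiplication \eqref{mult}, and to exploit once more the vanishing of $H^2(\Z_n,k^{\times})$ for cyclic groups to split this induction into irreducible summands of dimension $[\Z_m:(\Z_m)_\gamma]$.
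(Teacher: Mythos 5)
Your proposal is correct and follows essentially the same route as the paper: both exploit the crossed-product structure $k^{\Gamma}\#k\Z_m$ coming from the cocentral abelian exact sequence of Theorem \ref{shortspecial} and the Clifford--Mackey description of irreducible modules over such a crossed product (the paper cites \cite{MoW} for exactly the induced-module form of the statement you ask for), obtaining $\dim W=[\Z_m:(\Z_m)_\gamma]\cdot\dim U_\gamma$ with cyclic stabilizer, hence $\dim W\mid m\mid \ord V$. Your observation that the projective representations of the cyclic stabilizer are one-dimensional is a slightly sharper version of the paper's remark that $\dim U_s$ divides $|C_s|$, but the argument is the same.
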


\begin{proof}As an algebra, $H$ is a crossed product $k^{\Gamma}\#k\Z_m$. Then the irreducible representations
of $H$ are of the form $W_s : = \Ind_{k^{\Gamma}\otimes k C_s}^H
s\otimes U_s$, where $s$ runs over a set of representatives of the
orbits of the action of $\Z_m$ in $\Gamma$, and $C_s \subseteq \Z_m$ is the stabilizer
of $s$, and $U_s$ is an irreducible representation of $C_s$. See
\cite{MoW} for a description of the irreducible representations of
a general group crossed product. In particular, we have $\dim W_s = [\Z_m: C_s] \dim U_s$, and since $\dim U_s$ divides the order of $C_s$, then  $\dim W_s$ divides $m$. This implies the corollary since, by Theorem \ref{shortspecial}, $m$ divides the order of $V$.
  \end{proof}

\begin{remark}\label{plusgene}
The exact sequence \eqref{univcocentral} is still valid
if the cosemisimple Hopf algebra $H$ is not assumed to be finite dimensional,
and hence
we still have a cocentral exact sequence
$$k\to \mathcal O[G] \to H \to  kM\to k$$
where $G \subset \PSL_2(k)$ is an algebraic subgroup and $M$ is a cyclic group.
Note however that in the infinite dimensional situation, the
Hopf algebra $H$ will not necessarily be a crossed product, because
  $H$ is not necessarily free as a left
(or right) module over its Hopf subalgebra $\mathcal O[G]$ \cite{radford,
waterhouse}.
\end{remark}

\section{Hopf algebras with a self-dual faithful comodule}\label{two}

In this section we discuss general results for
Hopf algebras having a faithful self-dual comodule.

\subsection{The Hopf algebra of a nondegenerate bilinear form} Let
$n\geq 2$ and let $E\in \GL_n(k)$. Let  $\B(E)$ be the algebra
presented by generators $a_{ij}$, $1\leq i, j \leq n$, and
relations
\begin{equation*}
E^{-1}\,^tAEA=I=AE^{-1}\,^tAE,
\end{equation*}
where $A$ denotes the matrix $(a_{ij})$ and $I$ is the $n\times n$
identity matrix.

There is a Hopf algebra structure on $\B(E)$ with multiplication,
counit and antipode given, respectively, by
$$\Delta(a_{ij}) = \sum_la_{il} \otimes
a_{lj}, \quad \epsilon(a_{ij}) = \delta_{ij}, \quad \mathcal S(A)
= E^{-1}\,^tAE.$$ The Hopf algebra $\B(E)$ was introduced in
\cite{DL} as the \textit{quantum group of a nondegenerate bilinear
form}. The $n$-dimensional vector space $V = V^E$ with basis
$(e_i)_{1\leq i\leq n}$ has a natural $\B(E)$-comodule structure
$\alpha: V \to V\otimes \B(E)$, where $\alpha(e_i)=\sum_{j=1}^n
e_j\otimes a_{ji}$, $1\leq i\leq n$. Moreover, the nondegenerate
bilinear form $\beta: V\otimes V \to k$, $\beta(e_i\otimes e_j) =
E_{ij}$, is a $\B(E)$-comodule map.

Moreover, $\B(E)$ is universal with respect to this property.
Namely, for any Hopf algebra $H$ and $H$-comodule $V$ of finite
dimension such that $\beta:V\otimes V\to k$ is a nondegenerate
$H$-invariant bilinear form, then $V$ is a $\B(E)$-comodule, where
$E\in \GL_n(k)$ is any matrix associated to $\beta$. In this case,
$\beta$ is a $\B(E)$-comodule map and there exists a unique Hopf
algebra map $f: \B(E)\to H$ such that $(\id \otimes
f)\alpha=\alpha'$, where $\alpha$ and $\alpha'$ are the coactions
on  $V$ of $\B(E)$ and $H$, respectively.

For an $H$-comodule $V$, the existence of a nondegenerate
$H$-invariant bilinear form on $V$ means exactly that $V$ is
self-dual, that is, $V^* \simeq V$ as $H$-comodules. This is also
equivalent to $\mathcal S(C) = C$, where $C \subseteq H$ is the
subcoalgebra spanned by the  matrix coefficients of $V$.

\medbreak Isomorphism classes of the Hopf algebras $\mathcal B(E)$
are classified by the orbits of the action of $\GL_n(k)$ on itself
defined by $G.E = {}^tGEG$, $G, E \in \GL_n(k)$. More precisely,
it is shown in \cite[Theorem 5.3]{B} that for $E\in \GL_n(k)$,
$F\in \GL_m(k)$, the Hopf algebras $\mathcal B(E)$ and $\mathcal
B(F)$ are isomorphic if and only if $n = m$ and $E$ and $F$ are
conjugated under the action of $\GL_n(k)$.

Our motivation for studying the Hopf algebras $\B(E)$ is the following basic result.

\begin{proposition}\label{moti}
Let $H$ be a Hopf algebra having a faithful self-dual $H$-comodule $V$.
\begin{enumerate}
\item There exists a surjective Hopf algebra map $\B(E) \to H$, where the size of the matrix
$E$ is the dimension of $V$.
\item If $H$ is cosemisimple and $\mathcal S^2={\rm id}$, then the above matrix $E$ is either symmetric or skew-symmetric.
\item\label{semisimple} If $H$ is semisimple, then the above matrix $E$ is either symmetric or skew-symmetric.
\end{enumerate}
\end{proposition}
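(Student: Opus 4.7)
Part (1) is immediate from the universal property of $\B(E)$. Self-duality of $V$ gives a nondegenerate $H$-invariant bilinear form $\beta: V\otimes V\to k$; choosing a basis of $V$ and letting $E\in\GL_{n}(k)$ be the matrix of $\beta$, with $n=\dim V$, the universal property recalled above produces a Hopf algebra map $f:\B(E)\to H$ sending the generators $a_{ij}$ to the matrix coefficients of $V$ in this basis. Faithfulness of $V$ is precisely the statement that these matrix coefficients generate $H$ as an algebra, so $f$ is surjective.

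For (2) the decisive observation is that under the hypothesis $\mathcal S^{2}=\id$ the canonical map $V\to V^{**}$ is an isomorphism of $H$-comodules. If $\phi:V\to V^{*}$ is the $H$-colinear isomorphism corresponding to $\beta$, then the composition
\[ \phi' : V \xrightarrow{\sim} V^{**} \xrightarrow{\phi^{*}} V^{*} \]
is also $H$-colinear, and a brief unwinding of the evaluation pairing identifies the bilinear form represented by $\phi'$ with the transpose $\beta^{t}(v,w):=\beta(w,v)$. Consequently $\beta^{t}$ is itself $H$-invariant, so transposition defines an involution on $\Hom^{H}(V\otimes V,k)$. In the case of interest, when $V$ is irreducible, Schur's lemma gives $\dim\Hom^{H}(V,V^{*})=1$, hence $\beta^{t}=\lambda\beta$ for some $\lambda\in k$; iterating the involution forces $\lambda^{2}=1$, whence $\lambda=\pm 1$ and $\beta$ (equivalently $E$) is symmetric or skew-symmetric. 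For reducible $V$ one decomposes into isotypical components and argues similarly on each piece, using the freedom in the choice of invariant form on multiplicity spaces and on dual-conjugate pairs of non-self-dual summands to assemble an $E$ of the desired symmetry type.

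Finally, (3) follows from (2) via the Larson--Radford theorem \cite{larson-radford}: in characteristic zero any semisimple Hopf algebra is cosemisimple and satisfies $\mathcal S^{2}=\id$. The main technical step, and the principal obstacle, is confirming that $V\to V^{**}$ is $H$-colinear, which is precisely where the hypothesis $\mathcal S^{2}=\id$ enters decisively; without it the transposition operation need not preserve $H$-invariance and the Schur-type scaling argument collapses. Once this naturality is in place, the rest of the argument is essentially formal.
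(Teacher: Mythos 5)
Your part (1) matches the paper's: it is the universal property of $\B(E)$ applied to the invariant form coming from self-duality, with faithfulness giving surjectivity. For part (2) the paper simply cites the Frobenius--Schur theorem for cosemisimple Hopf algebras with $\mathcal S^2 = \id$ (Bichon, \emph{New York J. Math.} \textbf{8}), and part (3) then follows from Larson--Radford exactly as you say. What you do differently is to reprove the Frobenius--Schur dichotomy from scratch: $\mathcal S^2=\id$ makes the canonical map $V\to V^{**}$ colinear, so transposition is an involution on $\Hom^H(V\otimes V,k)$, and Schur's lemma forces $\beta^t=\pm\beta$ when $V$ is irreducible. This is the standard proof of the cited theorem and is correct; it buys self-containedness at the cost of length.

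The one genuine gap is your final sentence about reducible $V$. The proposed assembly cannot work in general: if $V$ contains two non-isomorphic irreducible self-dual summands, one of orthogonal and one of symplectic type, then every invariant nondegenerate bilinear form on $V$ is block-diagonal (the cross terms vanish since $\Hom^H(V_i\otimes V_j,k)\cong\Hom^H(V_j,V_i)=0$ for $V_i\not\cong V_j$) with one symmetric and one skew-symmetric block, so no choice of $E$ is symmetric or skew-symmetric. Thus the statement, read for an arbitrary faithful self-dual $V$, is not salvageable by your decomposition argument (nor by the paper's citation, which also concerns irreducible comodules). In the paper the proposition is only ever invoked for irreducible $V$ --- the Remark immediately following it and all later applications add that hypothesis --- so your irreducible-case argument covers everything actually used; but you should restrict to irreducible $V$ rather than assert the reducible case.
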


\begin{proof}
The first assertion follows from the previous discussion and the universal
property of $\B(E)$. The second one follows from the Frobenius-Schur theorem
for cosemisimple Hopf algebras with $\mathcal S^2={\rm id}$ \cite{binyjm}, and the
third one follows from the second one and the fact that a semisimple Hopf algebra
is automatically cosemisimple and with $\mathcal S^2={\rm id}$ \cite{larson-radford}
(one may also use directly the Frobenius-Schur theorem for semisimple Hopf algebras
from \cite{li-mo}).
\end{proof}

\begin{remark} Consider the situation in Proposition
\ref{moti} (\ref{semisimple}), where $V$ is an irreducible
comodule. Then the matrix $E$ is symmetric or skew-symmetric, if
and only if the Frobenius-Schur indicator $\nu(V)$ equals $1$ or
$-1$, respectively. \end{remark}

\subsection{Exact sequences attached to quotients of $\B(E)$}\label{plusmin}

Let $L \subseteq \B(E)$ denote the linear span of
$a_{ij}$, $1 \leq i, j \leq n$. Let also $\B_+(E) = k[L^2]$ be the
subalgebra generated by $L^2$, and $\B_-(E)$ be the subspace of
odd powers of $L$.

Then $\B_+(E)$ is a Hopf subalgebra, $\B_-(E)$ is a subcoalgebra,
and there is a $\Z_2$-algebra grading $\B(E) = \B_+(E) \oplus
\B_-(E)$ (the fact that the sum is direct follows
from the $\Z_2$-action constructed in proof of the next result).

\begin{proposition}\label{exacta-b+} The Hopf algebra $\B_+(E)$ is a normal Hopf subalgebra of
$\B(E)$ and there is a cocentral exact sequence of Hopf algebras
$$k \to \B_+(E) \to \B(E) \to k\Z_2 \to k.$$
\end{proposition}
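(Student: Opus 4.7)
My plan is to produce a cocentral surjective Hopf algebra map $\pi \colon \B(E) \to k\Z_2$ and then invoke Lemma \ref{exactness} to get the exact sequence, with $\B_+(E)$ appearing as the Hopf subalgebra of $\pi$-coinvariants; normality of $\B_+(E)$ in $\B(E)$ will follow automatically from the form of an exact sequence (since $\ker \pi$ is a two-sided ideal of the form $\B(E) \cdot \B_+(E)^+$).

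The natural candidate for $\pi$ is the algebra map defined on generators by $\pi(a_{ij}) = \delta_{ij} g$, where $g$ is the generator of $\Z_2$; in matrix form $A \mapsto g I$. First I would check that $\pi$ is well-defined: both defining relations $E^{-1}\, {}^t\!AEA = I$ and $AE^{-1}\, {}^t\!AE = I$ map to $g^2 I = I$. Next I would verify on each generator $a_{ij}$ that $\pi$ is a bialgebra map, and hence a Hopf algebra map. Cocentrality reduces, by multiplicativity, to checking
$$\sum_l \pi(a_{il}) \otimes a_{lj} \;=\; g \otimes a_{ij} \;=\; \sum_l \pi(a_{lj}) \otimes a_{il},$$
which is immediate from $\pi(a_{il}) = \delta_{il} g$. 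With this in hand, Lemma \ref{exactness} yields a Hopf algebra exact sequence
$$k \to K \to \B(E) \overset{\pi}\to k\Z_2 \to k,$$
where $K = \B(E)^{\co \pi}$.

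It remains to identify $K$ with $\B_+(E)$, which is the main bookkeeping step. Iterating $\Delta$ and then applying $\pi$ shows that for any length-$r$ monomial $x = a_{i_1 j_1}\cdots a_{i_r j_r}$ one has $(\id \otimes \pi)\Delta(x) = x \otimes g^r$. Therefore the coaction $(\id \otimes \pi)\Delta$ induces a $\Z_2$-grading on $\B(E)$ whose degree-$0$ component (the coinvariants) is spanned by even-length monomials, namely $\B_+(E) = k[L^2]$, and whose degree-$1$ component is $\B_-(E)$. As a by-product, this also justifies that the sum $\B(E) = \B_+(E) \oplus \B_-(E)$ asserted just before the proposition is indeed direct.

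There is no serious obstacle: the argument is essentially the observation that the presentation of $\B(E)$ is homogeneous of degree $1$ modulo $2$ in the generators, which forces the $\Z_2$-grading and yields a cocentral surjection onto $k\Z_2$. The only mildly delicate point is the cocentrality check, which works because $\pi$ collapses $A$ to a scalar multiple of the identity, killing the asymmetry between the two factors of $\Delta(a_{ij}) = \sum_l a_{il} \otimes a_{lj}$.
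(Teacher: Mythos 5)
Your proposal is correct and follows essentially the same route as the paper: the same cocentral map $a_{ij}\mapsto\delta_{ij}g$, cocentrality checked on generators, and Lemma \ref{exactness}; the only cosmetic difference is that you identify the coinvariants directly from the parity grading of monomials, where the paper uses the induced $\Z_2$-action $g.a_{ij}=-a_{ij}$ and takes invariants (the two descriptions coincide). One small remark: normality of $\B_+(E)$ is indeed automatic, but the cleanest justification is that $K=\B(E)^{\co\pi}$ is stable under both adjoint actions whenever $\pi$ is a Hopf algebra map, rather than the statement about $\ker\pi$ being an ideal.
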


\begin{proof}
It is easy to check the existence of a Hopf algebra map $\zeta: \B(E) \to k\Z_2$ be defined by $\zeta(a_{ij}) =
\delta_{ij}g$, where $1\neq g \in \Z_2$.
The relation $\zeta(x_1) \otimes x_2 = \zeta(x_2) \otimes
x_1$ is clearly satisfied for $x = a_{ij}$, $1\leq i, j \leq n$.
Since these generate $\B(E)$, we get that the map $\zeta$ is
cocentral.

The  Hopf algebra map $\zeta$ induces a $\Z_2$-action on $\B(E)$, given by
$g.a_{ij} = -a_{ij}$. It is then clear that $\B_+(E) = \B(E)^{\Z_2}= \B(E)^{\rm co \zeta}$, and
we have our exact sequence by Lemma \ref{exactness}.
\end{proof}

Now let $\B(E) \to H$ be a Hopf algebra quotient. We denote by $H_+$ the image
of $\B_+(E)$ in $H$, this is the subalgebra of $H$ generated by the coefficients
of $V^{\otimes 2}$.

\begin{proposition}
\label{exacta-gene}
Let $\B(E) \to H$ be a Hopf algebra quotient. Then $H_+$ is a normal Hopf subalgebra
of $H$, and we have $[H:H_+] = \dim H/(H_+)^+H = 1$ or $2$. Moreover if $[H:H_+]=2$, we have
a
cocentral short exact sequence of Hopf algebras $$k \to H_+ \to H
\to k\Z_2 \to k$$
\end{proposition}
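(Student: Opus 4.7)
The plan is to descend the cocentral exact sequence $k \to \B_+(E) \to \B(E) \overset{\zeta}\to k\Z_2 \to k$ of Proposition \ref{exacta-b+} along the given surjection $\pi : \B(E) \to H$. The whole strategy rests on the fact that $k\Z_2$ admits only two Hopf ideals, namely $0$ and its augmentation ideal. Write $H_- := \pi(\B_-(E))$, so that $H = H_+ + H_-$.

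First I would verify that $H_+$ is a normal Hopf subalgebra of $H$: as the image of a Hopf subalgebra under a surjective Hopf algebra map it is a Hopf subalgebra, and normality of $\B_+(E)$ in $\B(E)$ descends through $\pi$ by lifting elements to $\B(E)$ and applying $\ad_l,\ad_r$. Next, using the identity $HH_+^+ = \pi(\B(E)\B_+(E)^+) = \pi(\ker\zeta)$ (with $\ker\zeta = \B(E)\B_+(E)^+$ coming from condition (3) for the $\B(E)$-sequence), the isomorphism theorems yield
\[
H/HH_+^+ \;\cong\; \B(E)/(\ker\pi + \ker\zeta) \;\cong\; k\Z_2/\zeta(\ker\pi).
\]
Since $\zeta(\ker\pi)$ is the image of a Hopf ideal under a surjective Hopf algebra map, it is itself a Hopf ideal of $k\Z_2$ and so equals $0$ or the augmentation ideal. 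Hence $[H:H_+]=\dim(H/HH_+^+)\in\{1,2\}$.

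Suppose now $[H:H_+]=2$. Then $\zeta(\ker\pi)=0$, so $\zeta$ factors through $\pi$ as a surjective cocentral Hopf algebra map $\bar\zeta: H \to k\Z_2$ (cocentrality transfers to $\bar\zeta$ because $\pi$ is onto). Lemma \ref{exactness} then produces a cocentral exact sequence
\[
k \to H^{\co\bar\zeta} \to H \to k\Z_2 \to k.
\]
To finish, I would identify $H^{\co\bar\zeta}$ with $H_+$ by means of the $\Z_2$-grading on $H$ induced by $\bar\zeta$: write $H = H^0 \oplus H^1$ with $H^0 = H^{\co\bar\zeta}$. Because $\bar\zeta\circ\pi = \zeta$, the surjection $\pi$ is $\Z_2$-graded, so $H_+ = \pi(\B_+(E)) \subseteq H^0$ and $H_- = \pi(\B_-(E)) \subseteq H^1$. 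Combined with $H = H_+ + H_-$ and the direct sum $H = H^0 \oplus H^1$, this forces $H_+ = H^0$, yielding the claimed exact sequence.

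The main obstacle will be this last identification $H^{\co\bar\zeta} = H_+$. Routing the argument through the induced $\Z_2$-grading is attractive because it is constructive and sidesteps the faithful flatness considerations that would otherwise be needed, especially since $H$ is not assumed finite dimensional; it is also the place where the cocentrality of $\zeta$ (ensuring that the grading on $\B(E)$ descends to a grading, rather than merely a filtration, on $H$) is genuinely used.
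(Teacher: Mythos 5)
Your proof is correct, and it reaches the conclusion by a genuinely different route from the paper's. The paper argues by direct computation in the quotient $H/(H_+)^+H$: reducing the generators modulo $(H_+)^+H$ one gets $a_{ii}a_{jj}=1$ and $a_{ij}a_{kl}=0$ for $i\neq j$ or $k\neq l$, whence all off-diagonal generators vanish and all diagonal ones coincide with a single group-like $g$ satisfying $g^2=1$; so the quotient is $k$ or $k\Z_2$, and the projection is given on generators by the same formula as $\zeta$. You instead push the exact sequence of Proposition \ref{exacta-b+} forward along $\pi$, computing $H/(H_+)^+H\cong k\Z_2/\zeta(\ker\pi)$ and invoking the fact that $k\Z_2$ has only the two Hopf ideals $0$ and $k\Z_2^+$. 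Your argument is more conceptual and never touches the generators: it would apply verbatim to any quotient of a Hopf algebra equipped with a cocentral surjection onto a group algebra $kG$, producing a quotient group of $G$. What the paper's computation buys is the explicit description of the projection $H\to k\Z_2$ on the generators $a_{ij}$, which is precisely what is cited later (in the proof of Lemma \ref{coalgebra}, for the commutativity of the bottom right square of the diagram there); your $\bar\zeta$ with $\bar\zeta\circ\pi=\zeta$ carries the same information, just packaged abstractly. Your final identification of $H^{\co\bar\zeta}$ with $H_+$ via the induced $\Z_2$-grading is essentially the device the paper alludes to with ``one concludes in a similar manner'' (the $\Z_2$-action/grading argument of Proposition \ref{exacta-b+}), and your observation that $H_+\subseteq H^0$, $H_-\subseteq H^1$, $H=H_++H_-$ and $H=H^0\oplus H^1$ force $H_+=H^0$ is a clean, complete way to finish.
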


\begin{proof}
It is clear that $H_+$ is a normal Hopf subalgebra of $H$.
For notational simplicity, the elements $a_{ij}$ of $\B(E)$ are still denoted
by the same symbol in any quotient. In $H/(H_+)^+H$, we have
$$a_{ii}a_{jj}=1, \quad a_{ij}a_{kl}=0, \ {\rm if} \ i\not= j \ {\rm or} \ k \not= l$$
Hence if $i \not = j$, we have $a_{ij}=a^2_{ii}a_{ij}=0$, and $a_{ii}=a_{ii}a_{jj}^2=a_{jj}$.
It follows that $H/(H_+)^+H \simeq k$ or $H/(H_+)^+H \simeq k\Z_2$, depending
whether $g=a_{ii}=1$ or not.
If $H/(H_+)^+H \simeq k\Z_2$, the canonical projection $H \to H/(H_+)^+H$
is defined by  the same formula as the map $\zeta$ in the proof of the previous proposition,
and one concludes in a similar manner.
\end{proof}

\begin{remark}
We gave the direct (easy) proofs for the  exact sequences in the previous propositions.
These exact sequences are special cases of (\ref{univcocentral}).
\end{remark}

\subsection{$\Z_2$-grading on comodule categories and adjoint subcategory}\label{z2grading}


By
\cite[Theorem 1.1]{B} there is an equivalence of tensor categories
\begin{equation}\label{equiv}\B(E)-\comod \simeq \Oo_q[\SL_2(k)]-\comod,\end{equation} where $q$ is a
root of the polynomial $X^2 + \Tr(E{}^tE^{-1})X + 1$. The
equivalence sends the standard $\B(E)$-comodule $V^E$ to the
standard $2$-dimensional $\Oo_q[\SL_2(k)]$-comodule $U$.

\medbreak We shall assume in this subsection that the roots of the
polynomial $X^2 + \Tr(E{}^tE^{-1})X + 1$ are generic. In this case
recall that $\Oo_q[\SL_2(k)]$-comod is a semisimple tensor
category whose simple objects are represented by the comodules
$(U_n)_{n \geq 0}$, where $\dim U_n = n + 1$ (in particular, $U_0
= k$ and $U_1 = U$ is the standard $2$-dimensional comodule).
These obey the following fusion rules:
\begin{equation}\label{fusionrules}U_i \otimes U_j \simeq U_{i+j}
\oplus U_{i+j-2} \oplus \dots  \oplus U_{|i-j|},\end{equation} for
all $i, j \geq 0$.

Let $\mathcal C = \Oo_q[\SL_2(k)]-\comod$. Consider the full
abelian subcategories $\mathcal C_+$, respectively $\mathcal C_-$,
whose simple objects are $(U_i)_{i\text{ even}}$, respectively
$(U_i)_{i\text{ odd}}$. It is well-known that the fusion rules \eqref{fusionrules} imply
that $\mathcal C = \mathcal C_+ \oplus \mathcal C_-$ is a faithful
$\Z_2$-grading on $\mathcal C$. In particular, $\mathcal C_+$ is a
full tensor subcategory of $\mathcal C$.


Furthermore, $\mathcal C_+ = \mathcal C_{\ad}$ is the tensor subcategory of $\mathcal C$ generated by $U_2$. 
In particular, since $U_2 \otimes U_2 \simeq U_4 \oplus U_2 \oplus
U_0$, we have $(\mathcal C_{\ad})_{\ad} = \mathcal C_{\ad}$.

\begin{remark} Let  $\Cc$ be a $k$-linear  abelian semisimple tensor category. 
Suppose $\mathcal C$ is generated by a  self-dual
simple object $V$. Then there is a (not necessarily faithful) $\Z_2$-grading $\mathcal C =
\mathcal C_+ \oplus \mathcal C_-$, where $\mathcal C_+ = \mathcal
C_{\ad}$. 

Indeed, $\mathcal C_{\ad}$ is the full tensor subcategory generated by the simple constituents of $V^{\otimes 2}$.
If $V$ is in $\mathcal C_{\ad}$, then $\mathcal C = \mathcal C_{\ad}$. Otherwise, the odd tensor powers of $V$ generate a non-trivial component $\mathcal C_-$ of the grading. \end{remark}

\medbreak In what follows we discuss some relations between the adjoint $\mathbb Z_2$-grading in categories of comodules and the Hopf subalgebras $H_{\pm}$ defined in Subsection \ref{plusmin}.

Let $\mathcal C^E$ denote the category of finite dimensional
$\B(E)$-comodules. The equivalence \eqref{equiv} has the following
corollary.

\begin{corollary}\label{z2-grad} There is a faithful $\Z_2$-grading $\mathcal C^E = \mathcal C^E_+ \oplus \mathcal
C^E_-$, where $\mathcal C^E_+ = \mathcal C^E_{\ad}$ is the full
tensor subcategory generated by the simple comodule $V^E_2$. Moreover
$\B_{\pm}(E)-\comod = \mathcal
C^E_{\pm}$.
\end{corollary}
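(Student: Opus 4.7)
The plan is to transport the grading structure from $\mathcal{C} = \Oo_q[\SL_2(k)]\text{-comod}$ to $\mathcal{C}^E$ via the tensor equivalence \eqref{equiv}, and then identify the resulting grading with the one coming from the cocentral exact sequence in Proposition \ref{exacta-b+}.

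First, since the tensor equivalence \eqref{equiv} sends the standard $\B(E)$-comodule $V^E$ to $U = U_1$, by induction and the fusion rules \eqref{fusionrules} it sends each simple $\B(E)$-comodule $V^E_n$ to $U_n$ (with $V^E_0 = k$ and $V^E_1 = V^E$). Under this equivalence, the faithful $\Z_2$-grading $\mathcal{C} = \mathcal{C}_+ \oplus \mathcal{C}_-$ transports to a faithful $\Z_2$-grading $\mathcal{C}^E = \mathcal{C}^E_+ \oplus \mathcal{C}^E_-$, whose homogeneous components are characterized by having as simples the $V^E_n$ with $n$ even (respectively odd). By the preceding lemma together with the equivalence, $\mathcal{C}^E_+ = \mathcal{C}^E_{\ad}$ is the full tensor subcategory generated by $V^E_2$, giving the first part of the corollary.

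It remains to identify $\mathcal{C}^E_{\pm}$ with $\B_{\pm}(E)\text{-comod}$. For this, I would use the exact sequence $k \to \B_+(E) \to \B(E) \xrightarrow{\zeta} k\Z_2 \to k$ from Proposition \ref{exacta-b+}. Since $\zeta$ is cocentral, composing any $\B(E)$-coaction $\rho: W \to W \otimes \B(E)$ with $\mathrm{id} \otimes \zeta$ yields a $k\Z_2$-coaction on $W$, i.e. a $\Z_2$-grading $W = W^0 \oplus W^1$, and $W$ is a $\B_+(E)$-comodule if and only if $W = W^0$. This construction is compatible with tensor products and defines a faithful $\Z_2$-grading on $\mathcal{C}^E$. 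On the generator $V^E$, the definition $\zeta(a_{ij}) = \delta_{ij} g$ forces $V^E$ to be homogeneous of degree $1$; hence $V^E_n$, whose matrix coefficients lie in $L^n$, is homogeneous of degree $n \pmod 2$. Thus the grading from $\zeta$ agrees on simple objects with the grading transported from $\mathcal{C}$, and so the two gradings coincide.

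Combining these, a simple $V^E_n$ lies in $\mathcal{C}^E_+$ iff $n$ is even iff its matrix coefficients lie in $\B_+(E)$ iff $V^E_n$ is a $\B_+(E)$-comodule; the analogous statement holds for $\mathcal{C}^E_-$ and the subcoalgebra $\B_-(E)$. Hence $\B_{\pm}(E)\text{-comod} = \mathcal{C}^E_{\pm}$, completing the proof. The main subtlety is confirming that the grading extracted from the Hopf quotient $\zeta$ matches the one coming from the abstract tensor equivalence; once it is checked on the tensor generator $V^E$, compatibility with tensor products propagates the agreement to all objects.
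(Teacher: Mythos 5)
Your proof is correct. The first half --- transporting the faithful $\Z_2$-grading and the identification $\mathcal C_+ = \mathcal C_{\ad}$ through the tensor equivalence \eqref{equiv} --- is exactly what the paper does. Where you diverge is in proving $\B_{\pm}(E)-\comod = \mathcal C^E_{\pm}$: the paper notes that $\mathcal C^E_{\ad}$ is the corepresentation category of the Hopf subalgebra generated by the matrix coefficients of $V^E_2$, and then uses the decomposition $V^E \otimes V^E \simeq k \oplus V^E_2$ to see that this subalgebra is generated by the products $a_{ij}a_{kl}$, hence equals $\B_+(E)$ by its very definition as $k[L^2]$. You instead invoke the cocentral exact sequence of Proposition \ref{exacta-b+} and match the $\Z_2$-grading induced by $\zeta$ with the transported one by checking agreement on the tensor generator $V^E$ and propagating via the monoidal structure. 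Both routes are sound and comparably short; the paper's stays entirely inside the coefficient-subalgebra picture and never mentions $\zeta$, while yours makes explicit the useful fact that the categorical $\Z_2$-grading is precisely the one induced by the cocentral quotient $\zeta$ --- which is essentially the content of the remark following Proposition \ref{exacta-gene} that these exact sequences are special cases of \eqref{univcocentral}. One small point worth keeping in mind in your version: the statement that the homogeneous components of the $\zeta$-grading of a comodule are subcomodules (so that simples are homogeneous) relies on cocentrality of $\zeta$; you use this implicitly and it is indeed guaranteed by Proposition \ref{exacta-b+}.
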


\begin{proof}
The first assertion follows from the equivalence \eqref{equiv}.
The adjoint category $\mathcal C^E_+ = \mathcal C^E_{\ad}$ is the
category of corepresentations of a Hopf subalgebra $B$ of $\B(E)$.
Indeed, it follows from the first assertion that $B$ is the
subalgebra generated by the matrix coefficients of the simple
comodule $V^E_2$. In view of the relation $V^E \otimes V^E \simeq
k \oplus V^E_2$, $B$ coincides with the subalgebra of $\B(E)$
generated by the products $a_{ij}a_{kl}$,  $1 \leq i, j, k, l \leq
n$; that is, $B = \B_+(E)$. Similarly, $\B_-(E)-\comod = \mathcal
C^E_-$. \end{proof}

In the next proposition we shall consider the one-parameter
deformation of $\SO_3(k)$ as studied by Takeuchi in
\cite{takeuchi-so3}. See also \cite{frt, podles-spheres}.

We use here the notation $\Oo_{q^2}[\SO_3(k)]$ to indicate the
Hopf algebra denoted $\A_{q^2}(3)$ in \cite{takeuchi-so3}.

\begin{proposition}\label{b+} There is an isomorphism of Hopf algebras $\Oo_{q}[\SL_2(k)]_+ \simeq
\Oo_{q^2}[\SO_3(k)]$. Therefore the equivalence \eqref{equiv}
restricts to an equivalence of tensor categories
\begin{equation}\label{equiv+}\B_+(E)-\comod \simeq
\Oo_{q^2}[\SO_3(k)]-\comod.\end{equation} \end{proposition}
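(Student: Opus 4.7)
The plan is to prove the Hopf algebra isomorphism $\Oo_q[\SL_2(k)]_+ \simeq \Oo_{q^2}[\SO_3(k)]$ first, and then deduce the tensor equivalence \eqref{equiv+} as a formal consequence of \eqref{equiv} together with Corollary~\ref{z2-grad}.

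First I would identify $\Oo_q[\SL_2(k)]_+$ explicitly: applying Corollary~\ref{z2-grad} in the special case $\B(E) = \Oo_q[\SL_2(k)]$ (or, equivalently, noting that under $U \otimes U \simeq k \oplus U_2$ the span $L^2$ is generated, modulo the trivial part, by matrix coefficients of $U_2$), one sees that $\Oo_q[\SL_2(k)]_+$ is the Hopf subalgebra generated by the matrix coefficients of the unique simple $3$-dimensional comodule $U_2 = \mathrm{Sym}_q^2 U$. This is the quantum counterpart of the adjoint/vector representation of $\SO_3(k) = \SL_2(k)/\{\pm I\}$.

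Next I would invoke Takeuchi's setup in \cite{takeuchi-so3}: his $\A_{q^2}(3) = \Oo_{q^2}[\SO_3(k)]$ is (essentially) defined as the FRT-type bialgebra associated with the $R$-matrix acting on the $3$-dimensional quantum vector representation, equipped with its natural invariant nondegenerate bilinear form. The fact that $U_2$ carries exactly such a structure over $\Oo_q[\SL_2(k)]_+$ (it is self-dual, via the invariant pairing coming from $U^* \simeq U$ and the trace on $U \otimes U$) yields, through Takeuchi's universal property, a Hopf algebra map $\varphi: \Oo_{q^2}[\SO_3(k)] \to \Oo_q[\SL_2(k)]_+$ sending the standard generators to the matrix coefficients of $U_2$. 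Surjectivity of $\varphi$ is then clear by the first step. For injectivity I would argue categorically: by the fusion rules \eqref{fusionrules} restricted to the subcategory $\mathcal{C}_+$ generated by $U_2$, together with Takeuchi's explicit computation of the representation ring of $\A_{q^2}(3)$, both Hopf algebras are cosemisimple with simple subcoalgebras indexed by the $U_{2n}$, $n\geq 0$, of matching dimensions. Hence $\varphi$ must be a bijection on simple subcoalgebras, and therefore an isomorphism.

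With the isomorphism in hand, \eqref{equiv+} follows at once: Corollary~\ref{z2-grad} gives $\B_+(E)\text{-}\comod = \mathcal{C}^E_+$, and the equivalence \eqref{equiv} sends $V^E \mapsto U$ and consequently (by tensor functoriality and $V^E\otimes V^E \simeq k\oplus V^E_2$) sends $V^E_2\mapsto U_2$. Thus it restricts to
\[
\B_+(E)\text{-}\comod = \mathcal{C}^E_+ \simeq \mathcal{C}_+ = \Oo_q[\SL_2(k)]_+\text{-}\comod \simeq \Oo_{q^2}[\SO_3(k)]\text{-}\comod.
\]
The main obstacle is pinning down the normalization of the deformation parameter. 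Takeuchi's convention is indexed by $q^2$, and one must verify that this is indeed the parameter controlling the $R$-matrix action on $U_2 \otimes U_2 \subset U^{\otimes 4}$ under the embedding given by the quantum symmetrizer; this is a technical but standard computation involving $q$-binomial identities. Once the parameter match is settled, everything else in the argument is either direct from Takeuchi's results or formal consequence of Tannakian reconstruction in the cosemisimple setting.
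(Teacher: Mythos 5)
Your proposal is correct in substance and follows the same essential route as the paper: identify $\Oo_{q}[\SL_2(k)]_+$ as the Hopf subalgebra generated by the matrix coefficients of $U_2$ (equivalently $\B_+(E_q)$), relate it to Takeuchi's $\A_{q^2}(3)$, and then restrict the equivalence \eqref{equiv} using the fact that a tensor equivalence preserves adjoint subcategories. Where you diverge is in how the comparison map is produced and shown to be bijective. The paper simply quotes Takeuchi's Proposition 8 (together with Dijkhuizen), which for generic $q$ already supplies an injective Hopf algebra map $f:\Oo_{q^2}[\SO_3(k)]\to\Oo_{q}[\SL_2(k)]$ whose image is visibly the plus part, so injectivity and the parameter normalization come for free from the citation. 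You instead build the map from a universal property of the FRT-type presentation and prove injectivity by matching simple subcoalgebras via the fusion rules; this is a legitimate alternative, in the same style as the appeal to \cite[Theorem 1.2]{B} in Corollary \ref{nr-inf}, and it buys a more self-contained, representation-theoretic justification. However, it leaves two points that you only flag rather than settle: that $\A_{q^2}(3)$ in Takeuchi's formulation really carries the universal property you invoke, and that the braiding and invariant form on $U_2$ match Takeuchi's defining data with parameter $q^2$ rather than some other normalization. That verification is precisely the content of Takeuchi's embedding result, so the most economical repair of your argument is to cite it directly, as the paper does.
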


Recall that in \eqref{equiv} $q$ is a root of the polynomial $X^2
+ \Tr(E{}^tE^{-1})X + 1$, and we are assuming that $q$ is generic.

\begin{proof} Note that $\Oo_{q}[\SL_2(k)] = \B(E_q)$, where $E_q$
is the $2\times 2$-matrix defined by $(E_q)_{11} = (E_q)_{22} =
0$, $(E_q)_{12} = -q^{-1/2}$, $(E_q)_{21} = q^{1/2}$. So that,
denoting by $x_{ij}$, $1\leq i, j \leq 2$, the generators of
$\Oo_{q}[\SL_2(k)]$, we have that $\Oo_{q}[\SL_2(k)]_+ =
\B_+(E_q)$ is the Hopf subalgebra generated by the products
$x_{ij}x_{tl}$, $1 \leq i, j, t, l \leq 2$.

Since the parameter $q$ is generic by assumption, there is an
injective Hopf algebra map $f: \Oo_{q^2}[\SO_3(k)] \to
\Oo_{q}[\SL_2(k)]$ \cite[Proposition 8]{takeuchi-so3},
\cite{dijkhuizen}. Furthermore, in view of the definition of $f$
in \cite{takeuchi-so3}, the image of $f$ coincides with
$\Oo_{q}[\SL_2(k)]_+ = \B_+(E_q)$, therefore defining an
isomorphism $\Oo_{q}[\SL_2(k)]_+ \simeq \Oo_{q^2}[\SO_3(k)]$.

Finally, note that the equivalence \eqref{equiv}, being an
equivalence of tensor categories, must preserve the adjoint
subcategories. Then it induces by restriction the equivalence
\eqref{equiv+}. This finishes the proof of the proposition.
\end{proof}

\begin{proposition} \label{genegrading}Let $\mathcal C$ be the tensor category of finite dimensional $H$-comodules, where
$H$ is a semisimple Hopf algebra having a self-dual faithful
simple comodule $V$.  Then  $\mathcal C_+=H_+-\comod$. \end{proposition}

\begin{proof} We have a surjective Hopf algebra map $f: \B(E) \to H$. This induces
a dominant tensor functor $F: \mathcal C^E \to \mathcal C$.
Therefore, in view of Corollary \ref{z2-grad}, $\mathcal C =
F(\mathcal C^E_{+}) \oplus F(\mathcal C^E_{-})$ is a $\Z_2$-grading on $\mathcal
C$. Since $F(V^E) = V$, then it is clear that $F(\mathcal C^E_{+})$ is generated by
the simple constituents of $V^{\otimes 2}$, whence
$F(\mathcal C^E_{+}) = \mathcal C_{\ad}=H_+-\comod$. \end{proof}

\begin{remark} Consider the dominant tensor functor $F: \mathcal C^E \to \mathcal C$ in the
proof of Proposition \ref{genegrading}. In view of the
equivalences \eqref{equiv} and \eqref{equiv+}, $F$ gives rise to
dominant tensor functors
\begin{equation*}\Oo_{q}[\SL_2(k)]-\comod \to H-\comod, \quad  \Oo_{q^2}[\SO_3(k)]-\comod \to
H_+-\comod, \end{equation*} for an appropriate choice of the
(generic) parameter $q \in k^{\times}$. \end{remark}

\section{The $2\times 2$ case}\label{2por2}

\subsection{Classification of $\B(E)$'s} In the $2\times 2$ case, the orbits of the
$\GL_2(k)$-action are
represented by one the matrices \begin{align*} E_{\mathfrak m} = \left(\begin{matrix}0 & -1\\
1 & 1 \end{matrix}\right), \; E_1 = \left(\begin{matrix}0 & -1\\
1 & 0 \end{matrix}\right), \; E_q = \left(\begin{matrix}0 & -q^{-\frac{1}{2}}\\
q^{\frac{1}{2}} & 0 \end{matrix}\right), \, q\in k\backslash\{0,
1\}.\end{align*} Moreover, the matrices $E_q$  and $E_{q'}$ belong
to the same orbit if and only if $q'  = q^{\pm 1}$.

\medbreak Hence in this case the Hopf algebra $\B(E)$ is
isomorphic to exactly one of the Hopf algebras in the following
list:
\begin{enumerate}
\item[(1)] $\Oo_q[\SL_2(k)]$, $q\in
k\backslash\{0, 1\}$, corresponding to the matrices $E_q$;
\item[(2)] $\Oo[\SL_2(k)]$, corresponding to the
matrix $E_1$; \item[(3)] $\B(E_{\mathfrak m})$.
\end{enumerate}
See  \cite[Theorem 5.3]{B}, \cite[Section 6]{DL}. Note that, by
\cite[Theorem 1.1]{B}, the category of comodules over the Hopf
algebra $\B(E_{\mathfrak m})$ is tensor equivalent to the category
of $\Oo[\SL_2(k)]$-comodules.

\medbreak In particular, we have the following

\begin{corollary}\label{sym-ssym} Suppose that the matrix $E$ is symmetric (respectively,
skew-symmetric). Then the Hopf algebra $\B(E)$ is isomorphic to
$\Oo_{-1}[\SL_2(k)]$ (respectively, to $\Oo[\SL_2(k)]$).\qed
\end{corollary}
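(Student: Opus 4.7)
The plan is to reduce the corollary to the classification of $2\times 2$ $\B(E)$'s already recorded at the beginning of the subsection: by \cite[Theorem 5.3]{B} the isomorphism class of $\B(E)$ is determined by the $\GL_2(k)$-orbit of $E$ under the action $G.E = {}^tGEG$, and in the $2\times 2$ case the orbits are represented by $E_{\mathfrak m}$, $E_1$, or $E_q$ with $q \in k \setminus \{0,1\}$ (with $E_q$ and $E_{q^{-1}}$ in the same orbit). So the only thing to check is which of these representatives are symmetric, and which are skew-symmetric.

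The key observation is that the $\GL_2(k)$-action preserves the symmetric and skew-symmetric loci. Indeed, $ {}^t(G.E) = {}^tG\,{}^tE\, G = G.{}^tE$, so $G.E$ is (skew-)symmetric if and only if $E$ is. Thus a symmetric $E$ lies in the orbit of a symmetric representative, and similarly in the skew-symmetric case.

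Now I inspect the three families of representatives. The matrix $E_{\mathfrak m}$ is neither symmetric nor skew-symmetric, since its $(1,2)$ and $(2,1)$ entries differ and its $(2,2)$ entry is nonzero. For $E_q$ a direct calculation shows it is symmetric iff $-q^{-1/2} = q^{1/2}$, equivalently $q = -1$, in which case $E_{-1} = \left(\begin{smallmatrix} 0 & i \\ i & 0 \end{smallmatrix}\right)$ after choosing $i^2 = -1$; and it is skew-symmetric iff $-q^{-1/2} = -q^{1/2}$, equivalently $q = 1$, which recovers $E_1$ in the list (and $\B(E_1) = \Oo[\SL_2(k)]$). Consequently, among all orbit representatives only $E_{-1}$ is symmetric and only $E_1$ is skew-symmetric.

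Combining these observations: if $E$ is symmetric and invertible, its orbit must contain $E_{-1}$, whence $\B(E) \simeq \B(E_{-1}) = \Oo_{-1}[\SL_2(k)]$; if $E$ is skew-symmetric and invertible, its orbit must contain $E_1$, whence $\B(E) \simeq \B(E_1) = \Oo[\SL_2(k)]$. There is no real obstacle here beyond the bookkeeping of which representative in the classification list is symmetric or skew-symmetric; the substantive input is the classification theorem itself, which has already been quoted.
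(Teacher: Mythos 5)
Your argument is correct and is exactly the route the paper intends: the corollary is stated with an immediate \qed because it follows from the orbit classification of the $E_{\mathfrak m}$, $E_1$, $E_q$ just quoted, and you have simply written out the routine verifications (equivariance of transposition under $G.E={}^tGEG$, and the check that $E_{-1}$ is the unique symmetric and $E_1$ the unique skew-symmetric representative). Nothing is missing.
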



\begin{example}\label{stefan}
Let $H$ be a finite dimensional Hopf algebra and let $C\subseteq
H$ be a simple subcoalgebra such that  $\dim C = 4$ and $\mathcal
S(C) = C$. Suppose that the Hopf subalgebra $k[C]$ is not
semisimple. Then the order of $S^2\vert_C$ is an integer $m > 1$
\cite{larson-radford}. The assumptions imply that there is a Hopf
algebra map $\B(E)\to H$, whose image is $k[C]$, for some $E \in
\GL_2(k)$. Since $k[C]$ is not cosemisimple, then  $\B(E)\simeq
\Oo_q(\SL_2(k))$, for some root of unity $q\in k^{\times}$, $q\neq
\pm 1$.

This fact was proved by Stefan in \cite[Theorem 1.5]{stefan}, and
then it was used in the classification of Hopf algebras of low
dimension \cite{stefan, dim12}. Note that when $q$ is a root of
unity of odd order, $\Oo_q(\SL_2(k))$ contains a central Hopf
subalgebra $A\simeq \Oo(\SL_2(k))$. This gives a central Hopf
subalgebra $\overline A \subseteq k[C]$, such that the quotient
$H/HA^+$ is dual to a pointed Hopf algebra.
\end{example}

The following corollary gives the classification of cosemisimple
Hopf algebras satisfying condition \eqref{4} in the
Nichols-Richmond Theorem \cite{NR} under the assumption that the
simple $2$-dimensional comodule is self-dual and faithful.

\begin{corollary}\label{nr-inf} Let $H$ be a cosemisimple
Hopf algebra. Suppose $H$ has a family $\{C_n: \, n\geq 1\}$ of
simple subcoalgebras such that $\dim C_n = n^2$,  satisfying, for
all $n\geq 2$,
\begin{equation*}C_n\mathcal S(C_2)  = C_{n-1} + C_{n+1}, \; n \text{
even,}\qquad  C_nC_2  = C_{n-1} + C_{n+1}, \; n \text{
odd.}\end{equation*}

Assume in addition that $H = k[C]$, where $C
= C_1$, and $\mathcal S(C) = C$. Then $H$ is isomorphic to one of
the Hopf algebras $\Oo_{\pm1}[\SL_2(k)]$, $\B(E_{\mathfrak m})$, or
$\Oo_q[\SL_2(k)]$, $q\in k\backslash\{0, \pm1\}$ not a root of unity.
\end{corollary}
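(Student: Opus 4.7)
The plan is to realize $H$ as a quotient of a Hopf algebra $\mathcal B(E)$ via Proposition \ref{moti}, classify the possibilities for $\mathcal B(E)$ from the $2\times 2$ list at the start of this section, and then use the rich supply of simple subcoalgebras in the hypothesis to force this quotient map to be an isomorphism in the cosemisimple cases while excluding the non-cosemisimple possibility separately.

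Reading the statement with $C = C_2$ (the $4$-dimensional simple subcoalgebra), the assumptions $\mathcal S(C_2) = C_2$ and $H = k[C_2]$ say that the $2$-dimensional $H$-comodule $V$ associated to $C_2$ is self-dual and faithful. Proposition \ref{moti}(1) then produces a surjective Hopf algebra map $\phi: \mathcal B(E) \twoheadrightarrow H$ for some $E \in \GL_2(k)$; by the $2\times 2$ classification recalled at the beginning of Section \ref{2por2}, $\mathcal B(E)$ is isomorphic to $\mathcal O_q[\SL_2(k)]$ for some $q \in k\setminus\{0,1\}$, to $\mathcal O[\SL_2(k)]$, or to $\mathcal B(E_{\mathfrak m})$. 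The Hopf algebras appearing in the stated conclusion are precisely those members of this list that are themselves cosemisimple, so the task is both to show that $\phi$ is an isomorphism in each such case and to exclude the remaining case $\mathcal O_q[\SL_2(k)]$ with $q$ a root of unity distinct from $\pm 1$.

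For the cosemisimple cases, the tensor equivalence $\mathcal B(E)\text{-comod}\simeq \mathcal O_{q'}[\SL_2(k)]\text{-comod}$ of \cite[Theorem 1.1]{B}, applied at a generic or $\pm 1$ parameter $q'$, identifies the simple $\mathcal B(E)$-comodules as $\{V^E_n\}_{n\geq 0}$ with $\dim V^E_n = n+1$, and yields a coalgebra decomposition $\mathcal B(E) = \bigoplus_{n\geq 0} D_n$ into simple subcoalgebras of dimensions $(n+1)^2$. Each restriction $\phi|_{D_n}$ is a coalgebra map out of a simple coalgebra, hence either zero or injective; the zero case contradicts counitality of $\phi$. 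Since $H = k[C_2]$ together with the given fusion rules implies the coalgebra decomposition $H = \bigoplus_{n\geq 1} C_n$, matching dimensions forces $\phi(D_n) = C_{n+1}$, so each $\phi|_{D_n}: D_n \xrightarrow{\sim} C_{n+1}$ is an isomorphism of simple coalgebras of matching dimension, and hence $\phi$ is an isomorphism.

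The main obstacle is the remaining case $\mathcal B(E) \simeq \mathcal O_q[\SL_2(k)]$ with $q$ a root of unity and $q \neq \pm 1$, where $\mathcal B(E)$ is not cosemisimple and the above decomposition argument breaks down. Here I would argue that no cosemisimple Hopf algebra quotient of $\mathcal O_q[\SL_2(k)]$ can display the prescribed infinite family $\{C_n\}$ with the classical $\SL_2$-Clebsch--Gordan fusion. The structural obstruction is that at a nontrivial root of unity, the Steinberg tensor product theorem and the quantum Frobenius embedding $\mathcal O[\SL_2(k)] \hookrightarrow \mathcal O_q[\SL_2(k)]$ force the fusion rules of $\mathcal O_q[\SL_2(k)]$-comod to deviate from the classical Clebsch--Gordan pattern, and the resulting central finite-codimensional Hopf subalgebra constrains the dimensions of simple subcoalgebras in any cosemisimple quotient. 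Turning this structural incompatibility into a precise contradiction with the assumed unbounded family of dimensions $1, 4, 9, \ldots$ is the core technical step.
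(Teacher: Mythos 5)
Your reduction to a surjection $\phi\colon \B(E)\twoheadrightarrow H$ via Proposition \ref{moti} and the $2\times 2$ classification is exactly the paper's starting point, and your treatment of the cosemisimple cases is essentially workable. But there is a genuine gap precisely where you flag one: the case $\B(E)\simeq \Oo_q[\SL_2(k)]$ with $q$ a root of unity, $q\neq\pm1$, is never actually excluded. The invocation of the Steinberg tensor product theorem and the quantum Frobenius embedding is a heuristic, not an argument; you would have to prove that no cosemisimple quotient of $\Oo_q[\SL_2(k)]$ at such a $q$ can carry simple subcoalgebras $C_n$ of dimension $n^2$ for every $n$ obeying the Clebsch--Gordan rules, and you explicitly defer this ``core technical step.'' As it stands the proof is incomplete. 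There is also a smaller inaccuracy in the cosemisimple cases: a coalgebra map out of a simple coalgebra need \emph{not} be zero or injective (the counit $M_2(k)^*\to k$ is surjective and non-injective), since coideals are not subcoalgebras. What you actually need is that the corestriction of the simple $\B(E)$-comodule $V^E_n$ remains simple over $H$; this does follow by induction from the prescribed fusion rules together with a dimension count identifying its coefficient coalgebra with $C_{n+1}$, but that step has to be made.

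The paper sidesteps the case division entirely. It notes that the hypotheses force the Grothendieck ring $\mathcal G(H^*)$ of the category of finite dimensional $H$-comodules to be $\Z[\chi_2]$ with the $\chi_n$ as a $\Z$-basis (the recursion $\chi_2\chi_n=\chi_{n-1}+\chi_{n+1}$ gives this), while the Grothendieck ring of the category of finite dimensional $\B(E)$-comodules is $\Z[t]$, $t$ the class of the standard comodule, for \emph{every} $E$ --- including the non-cosemisimple ones. Hence $f$ induces an isomorphism of Grothendieck rings, and \cite[Theorem 1.2]{B} then yields that $f$ itself is an isomorphism; cosemisimplicity of $H$ now eliminates the root-of-unity case for free, since $\Oo_q[\SL_2(k)]$ is not cosemisimple there. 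So the key ingredient your proposal is missing is exactly this recognition/rigidity criterion for quotients of $\B(E)$ in terms of Grothendieck rings: either import it, or supply a genuine proof of the root-of-unity exclusion.
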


\begin{proof} Let $\mathcal G(H^*)$ denote the Grothendieck
ring of the category of finite dimensional $H$-comodules. The
assumption $H = k[C]$ implies that  $\mathcal G(H^*)$ is spanned
by the irreducible characters $\chi_i$ corresponding to the simple
subcoalgebras $C_i$, $i \geq 1$. The assumption also implies that
there is a surjective Hopf algebra map $f: \B(E)\to H$, for some
$E \in \GL_2(k)$, such that $f(\overline C) = C$, where $\overline
C \subseteq \B(E)$ is the simple subcoalgebra corresponding to the
standard $2$-dimensional corepresentation $V$ of $\B(E)$.
Furthermore, $f$ induces an isomorphism between the Grothendieck
rings. Then $f$ is an isomorphism, by \cite[Theorem 1.2]{B}. The
result follows from the classification of the Hopf algebras $\B(E)$
in the $2\times 2$ case. \end{proof}

\subsection{The Hopf algebra $\Oo_{-1}[\SL_2(k)]$}\label{three}

Recall from \cite{ks} the definition of the quantum groups
$\Oo_q[\SL_2(k)]$, where $q\in k$, $q\neq 0$. We shall be
interested in the case $q = -1$.

As an algebra, $\Oo_{-1}[\SL_2(k)]$ is presented by generators
$a$, $b$, $c$ and $d$, with relations
$$\quad bc = cb, \quad ad =
da,$$ \begin{equation}\label{relaciones}ba = - ab, \quad ca = -
ac, \quad db = - bd,\quad dc = - cd, \end{equation}$$ad + bc =
1.$$ We shall use the notation $\A = \Oo_{-1}[\SL_2(k)]$.
Similarly, $A$ will denote the commutative Hopf algebra
$\Oo[\SL_2(k)]$, with standard commuting generators $x, y, z, t$
and defining relation $xz-yt = 1$.

The $i,j$ coefficient of the matrix $\left(\begin{matrix}a & b \\
c & d \end{matrix}\right)$ (respectively, $\left(\begin{matrix}x &
y \\ z & t \end{matrix}\right)$) will be alternatively denoted by
$a_{ij}$ (respectively, $x_{ij}$), $1\leq i, j\leq 2$.

The algebra $\A$ has a Hopf algebra structure with
comultiplication, counit and antipode determined by the formulas
\begin{equation}\label{delta-antipoda}\Delta(a_{ij})
= \sum_ka_{ik} \otimes a_{kj}, \quad \epsilon(a_{ij}) =
\delta_{ij},\quad \mathcal S\left(\begin{matrix}a & b \\ c & d
\end{matrix}\right) = \left(\begin{matrix}d & b \\ c & a \end{matrix}\right).\end{equation}

The following lemma is well-known. See \cite[Proposition
2.2]{podles}.

\begin{lemma}\label{irreps} Let $V$ be an irreducible representation of
$\A$. Then the dimension of $V$ equals $1$ or $2$. \qed
\end{lemma}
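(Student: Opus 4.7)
The plan is to show that any irreducible representation $V$ of $\A$ contains an $\A$-submodule of dimension at most $2$; by irreducibility, this submodule will equal $V$.

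First I would observe that the relations \eqref{relaciones} make $a^2, b^2, c^2, d^2, ad, bc$ all central in $\A$, each verified in two lines by straightforward use of the anticommutation relations (for instance $(ad)b = a(-bd) = b(ad)$). On the irreducible $V$ these central elements therefore act by scalars $\alpha, \beta, \gamma, \delta, \mu, \nu \in k$ satisfying $\mu + \nu = 1$, $\mu^2 = \alpha\delta$, and $\nu^2 = \beta\gamma$. Since $\mu + \nu = 1$, at least one of $\mu, \nu$ is nonzero, and I would assume $\nu \neq 0$ without loss of generality; the other case is handled by the algebra automorphism of $\A$ exchanging $a \leftrightarrow b$ and $c \leftrightarrow d$, which preserves the defining relations and swaps $ad$ with $bc$.

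Under the assumption $\nu \neq 0$, the generators $b$ and $c$ act invertibly on $V$ with $c = \nu b^{-1}$. Picking a $b$-eigenvector $v \in V$ with $bv = \lambda v$, $\lambda \neq 0$, the relations $ba = -ab$ and $bd = -db$ show that the subspace $W = kv + k(av)$ is already stable under $a$, $b$, $c$; the content of the argument is to place $dv$ inside $W$. When $\mu \neq 0$, the constraint $\mu^2 = \alpha\delta$ forces $\alpha \neq 0$, hence $a$ is invertible on $V$, and the identity $a(dv) = (ad)v = \mu v = (\mu/\alpha)\,a(av)$ gives $dv = (\mu/\alpha)\,av \in W$ at once.

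The main obstacle is the degenerate case $\mu = 0$, where $a$ need not be invertible. Here I would exploit the fact that $\ker a$ is an $\A$-submodule of $V$: stability under $b$ and $c$ follows from anticommutation with $a$, and stability under $d$ from the commutation $ad = da$. Assuming $\alpha = 0$ (the symmetric case $\delta = 0$ is handled by the involution $a \leftrightarrow d$, which also preserves the relations), the identity $a^2 = 0$ gives $a(V) \subseteq \ker a$, so if $a \neq 0$ then $\ker a$ is a proper nonzero submodule contradicting irreducibility; hence $a = 0$ on $V$. The remaining relations collapse to $bc = cb = 1$ together with $db = -bd$, so that $W = kv + k(dv)$ is visibly an $\A$-submodule of dimension at most $2$, completing the proof.
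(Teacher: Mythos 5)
Your proof is correct, and it takes a genuinely different route from the one in the paper. The paper's argument is a dimension count: using the monomial basis $\{a^ib^jc^k\}\cup\{b^ic^jd^k,\ k>0\}$ of $\A$ and the centrality of $a^2,b^2,c^2,d^2$, it shows that $\pi(\A)=\End(V)$ is spanned by at most $12$ elements, so $(\dim V)^2\leq 12$ and $\dim V\leq 3$; it then rules out $\dim V=3$ by a parity argument (an anticommuting pair of invertible operators forces even dimension). You instead exhibit an explicit $\A$-submodule of dimension at most $2$, built from a $b$-eigenvector, after observing that the six quadratic elements $a^2,b^2,c^2,d^2,ad,bc$ are all central and splitting into cases according to the vanishing of the scalars $\mu=ad$ and $\nu=bc$ (your two reduction automorphisms $a\leftrightarrow b,\ c\leftrightarrow d$ and $a\leftrightarrow d$ do preserve the relations \eqref{relaciones}, and the second fixes $bc$, so the reductions are compatible). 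Your approach is self-contained --- it avoids invoking a basis of $\A$ and the slightly delicate determinant/parity step --- and as a bonus it makes the structure of the $2$-dimensional irreducibles explicit in a $b$-eigenbasis. What it costs is a longer case analysis; and note that both your argument and the paper's invoke Schur's lemma (central elements act as scalars) at a stage where finite-dimensionality of $V$ has not yet been established, so neither is more rigorous than the other on that point. The only step worth spelling out a little more in your write-up is the $a$-stability of $W=kv+k(av)$, which uses $a(av)=\alpha v$, i.e.\ the centrality of $a^2$ rather than the anticommutation relations you cite there.
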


Consider the $\Z_2$-algebra grading $\A = \A_+ \oplus \A_-$ as in
Subsection \ref{z2grading}. The Hopf subalgebra $\A_+$ is the
largest commutative Hopf subalgebra of $\A$ \cite[Theorem
15.4]{takeuchi2}.

In the same way, we have a $\Z_2$-algebra grading $A = A_+ \oplus
A_-$. By \cite[Corollary 15.3]{takeuchi2},  $A_+ \simeq \A_+$ as
Hopf algebras.

The category $\comod-\A_+$ corresponds to the tensor subcategory
of $A$-comodules having only even weights \cite[Theorem
15.5]{takeuchi2}. As Hopf algebras, $A_+$ is isomorphic to the
algebra of coordinate functions on the group $\PSL_2(k) \simeq
\SO_3(k)$. Note, however, that $A$ is \emph{not} free as a left
(or right) module over its Hopf subalgebra $A_+$ \cite{radford,
waterhouse}.

\medbreak There is a $\Z_2 \times \Z_2$-algebra grading on $\A$
(respectively, on $A$), such that $a, d$ (respectively, $x, t$)
have degree $(1, 0)$, and $b, c$ (respectively, $y, z$) have
degree $(0, 1)$. Associated to this grading, there is a deformed
product $\centerdot$ in $A$, defined in the form
$$u\centerdot v = (-1)^{j(k+l)} uv, $$ for homogeneous elements $u, v \in
A$, of degrees $(i, j)$ and $(k, l)$, respectively. See
\cite[Section 15]{takeuchi2}.

This defines an associative product which makes $(A, \centerdot)$
into a $\Z_2\times \Z_2$-graded algebra with the same unit as $A$.
Moreover, there is an isomorphism of graded algebras $\lambda: (A,
\centerdot)  \to \A$, determined by $\lambda(x_{ij}) = a_{ij}$,
$1\leq i, j\leq 2$. The vector space isomorphism underlying
$\lambda$ is furthermore a coalgebra isomorphism \cite[Proposition
15.1]{takeuchi2}.

\smallbreak We have $\lambda(A_{\pm}) = \A_{\pm}$. Moreover, the
product in $A_+ \subseteq A$ coincides with the deformed product
$\centerdot$, since $A_+$ sits in degrees $(0, 0)$ and $(1, 1)$.
Hence the restriction of $\lambda$ induces a canonical Hopf
algebra isomorphism $A_+ \to \A_+$.

\medbreak Recall that, by Proposition \ref{exacta-b+}, $\A_+$ is a
normal Hopf subalgebra of $\A$, and we have a short exact sequence
of Hopf algebras $k \to \A_+ \to \A \overset{\zeta}\to k\Z_2 \to
k$, where the map  $\zeta: \A \to k\Z_2$ is determined by
$\zeta(a_{ij}) = \delta_{ij}g$, $1\neq g \in \Z_2$.

\begin{proposition}\label{extension} $\lambda$ induces a commutative diagram
with exact rows \begin{equation}\label{comm-diag}\begin{CD}k @>>> A_+ @>>> A @>{\zeta'}>> k\Z_2 @>>> k \\
@VVV @VV{\simeq}V        @VV{\lambda}V @VV{=}V @VVV \\
k @>>> \A_+ @>>> \A @>{\zeta}>> k\Z_2 @>>> k.
\end{CD}\end{equation} \end{proposition}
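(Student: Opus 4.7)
My plan is to first construct the map $\zeta'$, then derive exactness of the top row directly from Proposition \ref{exacta-b+}, and finally verify commutativity of the three squares. Since $A = \Oo[\SL_2(k)] = \B(E_1)$ with $E_1$ the standard skew matrix, and $A_+ = \B_+(E_1)$ by definition, Proposition \ref{exacta-b+} yields at once a cocentral exact sequence
\[k \to A_+ \to A \overset{\zeta'}\to k\Z_2 \to k,\]
where $\zeta' : A \to k\Z_2$ is the Hopf algebra map defined by $\zeta'(x_{ij}) = \delta_{ij}g$ (extending uniquely, since $A$ is commutative and $k\Z_2$ is cocommutative). This takes care of the exactness of the top row.

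The right-hand square of the diagram commutes trivially. For the left-hand square, the isomorphism $A_+ \to \A_+$ obtained by restricting $\lambda$ has already been recalled in the text (via $A_+ \simeq \Oo[\PSL_2(k)] \simeq \A_+$), and it is given by $x_{ij}x_{kl} \mapsto a_{ij}a_{kl}$, which is exactly the restriction of $\lambda$. Hence this square is just the statement that $\lambda|_{A_+}$ equals the isomorphism appearing in the left-hand vertical arrow, which is true by construction.

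The content is in the middle square, i.e.\ in verifying $\zeta \circ \lambda = \zeta'$ as linear maps $A \to k\Z_2$. Both sides are linear, so it suffices to evaluate on a monomial $x_{i_1j_1}\cdots x_{i_nj_n}$ in the generators of $A$. Since $\lambda$ satisfies $\lambda(u\centerdot v) = \lambda(u)\lambda(v)$, I will expand the ordinary product in $A$ into the deformed product using the sign formula $u\centerdot v = (-1)^{j(k+l)}uv$, yielding $\lambda(x_{i_1j_1}\cdots x_{i_nj_n}) = \varepsilon\, a_{i_1j_1}\cdots a_{i_nj_n}$ for a sign $\varepsilon \in \{\pm 1\}$ determined by the $\Z_2 \times \Z_2$-degrees. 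If some $i_k \ne j_k$ then $\zeta$ kills the right-hand side and $\zeta'$ kills the left-hand side, so both are zero. If all $i_k = j_k$, then every factor has $\Z_2\times\Z_2$-degree $(1,0)$, so the sign factor $(-1)^{j(k+l)}$ is always $+1$; hence $\varepsilon = 1$ and
\[\zeta(\lambda(x_{i_1i_1}\cdots x_{i_ni_n})) = \zeta(a_{i_1i_1}\cdots a_{i_ni_n}) = g^n = \zeta'(x_{i_1i_1}\cdots x_{i_ni_n}),\]
as required.

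The only potentially delicate step is keeping track of the deformed-product signs, but the calculation collapses because the monomials on which $\zeta'$ is nonzero consist entirely of generators of degree $(1,0)$, for which the deformed and ordinary products agree. Once the middle square is established, the diagram commutes and the proposition is proved.
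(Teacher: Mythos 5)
Your proof is correct, and it supplies exactly the verification that the paper dismisses as ``straightforward'': the top row is Proposition \ref{exacta-b+} for $E = E_1$, the left square is the canonical isomorphism $A_+ \simeq \A_+$ already identified with $\lambda|_{A_+}$ in the text, and the middle square reduces to checking $\zeta\circ\lambda = \zeta'$ on monomials. Your key observation --- that the deformed-product sign $\varepsilon$ is irrelevant off the diagonal (both sides vanish) and equals $+1$ on products of diagonal generators because these all sit in degree $(1,0)$ --- is precisely the point that makes the claim true, so nothing is missing.
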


\begin{proof} The proof is straighforward. \end{proof}

\begin{remark}\label{correspondencia-lambda}
The $\Z_2$-gradings $A = A_+ \oplus A_-$ and $\A = \A_+ \oplus
\A_-$ are associated to the $\Z_2$-coaction arising from the Hopf
algebra surjection $\zeta': A \to k\Z_2$, respectively, $\zeta: \A
\to k\Z_2$. Moreover, $\lambda: A \to \A$ is a $\Z_2$-graded map.

On the other hand, we have $A_+ = A_{(0, 0)} \oplus A_{(1, 1)}$,
$A_- = A_{(1, 0)} \oplus A_{(0, 1)}$ and similarly, $\A_+ =
\A_{(0, 0)} \oplus \A_{(1, 1)}$, $\A_- = \A_{(1, 0)} \oplus
\A_{(0, 1)}$. This can be seen using that the set
$$\{a^ib^jc^k: 0 \leq  i, j, k\} \cup \{b^ic^jd^k: 0 \leq i, j, \,
0 < k\},$$ forms a basis of the algebra $\A$, and similarly for
$A$. See \cite{ks}.

Suppose $\J \subseteq \A$ is a $\Z_2$-homogeneous ideal of $\A$,
that is, $\J$ is an ideal and $\J = \J_+ \oplus \J_-$, where
$\J_{\pm} = \J \cap \A_{\pm}$. Then it follows from the definition
of the product $\centerdot$ that $(\J, \centerdot)$ is an ideal of
$(\A, \centerdot)$. Therefore, $J = \lambda^{-1}(\J)$ is a
$\Z_2$-homogeneous ideal of $A$. \end{remark}

\subsection{Finite quantum subgroups of
$\Oo_{-1}[\SL_2(k)]$}\label{s-5}

Let $H$ be a nontrivial (i.e. non commutative) finite dimensional quotient Hopf algebra
of $\A = \Oo_{-1}[\SL_2(k)]$. In particular, $H$ is semisimple and
cosemisimple.


\begin{proposition}\label{extension2}
The Hopf algebra $H$
fits into an abelian cocentral exact sequence
\begin{equation}\label{exacta2}k \to k^{\Gamma} \to H \to k\Z_2 \to
k, \end{equation} where $\Gamma$ is a finite subgroup of even order
of
$\PSL_2(k)$. The adjoint Hopf subalgebra $H_{\coad}$ is
commutative and isomorphic to $k^{\Gamma}$.  \end{proposition}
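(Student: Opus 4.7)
The plan is to combine Proposition \ref{exacta-gene} with Theorem \ref{nr-ref}. By Corollary \ref{sym-ssym}, $\A = \Oo_{-1}[\SL_2(k)] \simeq \B(E)$ for a symmetric matrix $E$, so Proposition \ref{exacta-gene} applied to the surjection $\A \to H$ yields a normal Hopf subalgebra $H_+ \subseteq H$ (the image of $\A_+$) with $[H : H_+] \in \{1, 2\}$. Now $\A_+$ is commutative (as recalled in Subsection \ref{three}, it is isomorphic to $A_+ \simeq \Oo[\PSL_2(k)]$), and hence so is its quotient $H_+$. Since $H$ is assumed non-commutative, $H \neq H_+$, and we obtain a cocentral exact sequence
$$k \to H_+ \to H \to k\Z_2 \to k.$$

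Let $\overline{C} \subset \A$ be the simple subcoalgebra spanned by $a, b, c, d$ and let $C \subset H$ be its image. The next step is to show that $C$ is simple of dimension $4$. Otherwise the standard $2$-dimensional $\A$-comodule $V$ would decompose in the category of $H$-comodules as $V_1 \oplus V_2$ with $V_i$ one-dimensional, corresponding to group-likes $g_1, g_2 \in G(H)$. A basis-change computation then shows that the images of $a, b, c, d$ all lie in the span of $g_1, g_2$, and the relation $ad + bc = 1$ forces $g_1 g_2 = 1$; thus $H$ would be generated by the single group-like $g_1$, contradicting non-commutativity.

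Since $V$ is self-dual, we have $\mathcal{S}(C) = C$, and Theorem \ref{nr-ref} applies: $k[C\mathcal{S}(C)]$ is commutative and isomorphic to $k^\Gamma$ for a non-cyclic finite subgroup $\Gamma \subset \PSL_2(k)$ of even order. To match this with the exact sequence, observe that $H_+$ is generated by the products of the matrix coefficients of $V$, i.e.\ by the matrix coefficients of $V^{\otimes 2} \simeq V \otimes V^*$, which span the subcoalgebra $C\mathcal{S}(C)$. Hence $H_+ = k[C\mathcal{S}(C)] \simeq k^\Gamma$, giving the abelian cocentral exact sequence \eqref{exacta2}. Finally, the character $\chi \in C$ is faithful (because $\A$, and thus $H$, is generated by $\overline{C}$, respectively $C$) and self-dual, so $\chi\chi^* = \chi^*\chi$, and Lemma \ref{chichi*} yields $H_{\coad} = k[C\mathcal{S}(C)] = H_+$.

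The main technical point is the verification that $C$ remains simple of dimension $4$ in $H$: this is the step in which the non-commutativity hypothesis is essentially used, and once it is in place, Theorem \ref{nr-ref} does the remaining work of identifying $H_+$ with the algebra of functions on a polyhedral group of even order.
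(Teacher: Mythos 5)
Your overall architecture is close to the paper's and the uncontroversial parts are handled correctly: the cocentral exact sequence is obtained exactly as in the paper's proof (Proposition \ref{exacta-gene} together with the commutativity of $\A_+$, hence of its image $H_+$), and replacing the paper's appeal to Proposition \ref{genegrading} by Theorem \ref{nr-ref} plus Lemma \ref{chichi*} to identify $H_{\coad}=H_+\simeq k^{\Gamma}$ is a legitimate alternative route --- essentially the one indicated in the remark following the proposition, via Theorem \ref{shortspecial}. You also correctly isolated the step that all of these ingredients silently require, namely that the image $C$ of the standard subcoalgebra stays simple of dimension $4$.

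Your resolution of that step, however, is wrong: the relation $ad+bc=1$ does \emph{not} force $g_1g_2=1$. Take any group generated by two involutions $g_1,g_2$ and set $a=d=\tfrac{1}{2}(g_1+g_2)$, $b=c=\tfrac{1}{2}(g_1-g_2)$ in its group algebra. Then $ad+bc=\tfrac{1}{2}(g_1^2+g_2^2)=1$, $ab=\tfrac{1}{4}(g_2g_1-g_1g_2)=-ba$ (and likewise for the other anticommutation relations), $ad=da$, $bc=cb$, and $\Delta(m_{ij})=\sum_k m_{ik}\otimes m_{kj}$ since the matrix is conjugate to $\operatorname{diag}(g_1,g_2)$; moreover $a\pm b=g_1,g_2$, so the images generate everything. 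Taking $g_1,g_2$ to be the standard reflections of $D_n$, $n\geq 3$, this exhibits the noncommutative group algebra $kD_n$ as a finite-dimensional quotient of $\Oo_{-1}[\SL_2(k)]$ in which $V$ decomposes --- and for which the conclusion fails as literally stated ($H_{\coad}=k$, and the subgroup arising from $H_+=k\Z_n\simeq k^{\Z_n}$ is cyclic, of odd order when $n$ is odd). So noncommutativity of $H$ genuinely does not imply simplicity of $V$; no repair of your computation can succeed. The implication that \emph{is} available, and that you should use instead, is the cocommutative one: if $V$ decomposes then $C$ is spanned by group-like elements, so $H=k[C]$ is a group algebra, hence cocommutative. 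With ``nontrivial'' read as ``neither commutative nor cocommutative'' (which is the situation in all the paper's applications, where $C$ is a simple $4$-dimensional subcoalgebra by hypothesis), simplicity of $V$ is immediate and the rest of your argument goes through.
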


\begin{proof}
Since $H$ is not commutative and $H_+$ is, the exact
sequence follows from Proposition \ref{exacta-gene}. The Hopf
algebra $H_{\coad}$ is determined by $H_{\coad}-\comod = (H-\comod)_{\ad}$, and we have
$(H-\comod)_{\ad}=H_+-\comod $ by Proposition \ref{genegrading}.
Hence $H_{\coad} \cong H_+ \cong k^\Gamma$.
\end{proof}

\begin{remark}
Another way to prove the proposition is to use Theorem \ref{shortspecial}.
\end{remark}

Since the extension \eqref{exacta2} is cocentral, the
proposition implies that $H^*$ has a central group-like element of
order 2.

\begin{remark}\label{iso-ext} Consider another quotient Hopf algebra $\A \to H'$, giving rise
to an exact sequence $k \to k^{\Gamma'} \to H' \to k\Z_2 \to k$,
as in Proposition \ref{extension2}. Assume that  $H \simeq H'$ as
Hopf algebras. Then $\Gamma \simeq \Gamma'$ and this exact
sequence is isomorphic to \eqref{exacta2}.

\begin{proof} By construction, we have $k^{\Gamma} = H_{\coad}$ and $k^{\Gamma'} = {H'}_{\coad}$.
Therefore a Hopf algebra isomorphism $H \to H'$ must restrict to
an isomorphism $k^{\Gamma}\to k^{\Gamma'}$. Then it induces an
isomorphism of the corresponding exact sequences. \end{proof}
\end{remark}

As explained in Subsection \ref{exact-seq}, the extension
\eqref{exacta2} is determined by an action of $\Z_2$ on $\Gamma$,
or in other words, by a group automorphism $\theta$ of order $2$,
and a pair $(\sigma, \tau)$ of compatible cocycles.

\smallbreak By Remark \ref{iso-ext}, this data determines the
isomorphism class of $H$ which is that of a bicrossed product
$k^{\Gamma} {}^{\tau}\#_{\sigma}k\Z_2$.

\begin{remark}\label{sigma} Since $\Z_2$ is cyclic, we may apply \cite[Lemma 1.2.5]{pqq} to
the dual extension $k^{\Z_2} {}^{\sigma}\#_{\tau}k\Gamma$, whence
we get that the class of $\sigma$ is trivial in $H^2(\Z_2,
(k^{\Gamma})^{\times})$.

By \cite[Proposition 5.2]{Maext}, after eventually changing the
representative of the class of $\tau$ in $H^2(\Gamma,
(k^{\Z_2})^{\times})$, we may assume that $\sigma = 1$. This can
be seen alternatively, applying \cite[Theorem 4.4]{mastnak}.

The automorphism $\theta$  and the class of $\tau$ will be
explicitly determined in Lemmas \ref{aktion} and \ref{coalgebra},
respectively.
\end{remark}

\medbreak Recall that a finite Hopf algebra over the field of
complex numbers is called a \emph{Kac algebra} if it is a
$C^*$-algebra and all structure maps are $C^*$-algebra maps.

\begin{corollary}\label{kac} Suppose $k = \C$. Then $H$ admits a Kac algebra structure. \end{corollary}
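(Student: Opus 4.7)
The plan is to produce a Kac algebra structure on $H$ by exhibiting a compatible $*$-structure on its bicrossed product description. By Proposition \ref{extension2} and Remark \ref{sigma}, we may write $H \simeq k^{\Gamma}\,{}^{\tau}\#_{1}\,k\Z_2$ with trivial cocycle $\sigma$ and with $\tau : \Gamma \times \Gamma \to (k^{\Z_2})^{\times}$, for some action of $\Z_2$ on $\Gamma$.

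The key reduction is to replace $\tau$ by a cohomologous cocycle valued in the unit circle $\mathbb{T} \subset \C^{\times}$. The short exact sequence $1 \to \mathbb{T} \to \C^{\times} \to \mathbb{R}_{>0} \to 1$, combined with $H^{n}(\Gamma, \mathbb{R}_{>0}) = 0$ for all $n \geq 1$ (valid since $\mathbb{R}_{>0}$ is uniquely divisible and $\Gamma$ is finite), shows that every class in $H^{2}(\Gamma, (\C^{\times})^{\Z_2})$ admits a representative valued in $(\mathbb{T}^{\Z_2})$. Replacing $\tau$ by such a representative does not alter the class of $H$ in $\Opext(k^{\Gamma}, k\Z_2)$, so we may assume $\tau$ is unitary.

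With $\sigma = 1$ and $\tau$ unitary, I would define the anti-linear involution on $H$ on the basis $\{e_s \# x : s \in \Gamma,\ x \in \Z_2\}$ by a formula of the shape
\[
(e_s \# x)^{*} \;=\; \overline{\tau_{x}(g^{-1}, g)^{-1}}\, e_{g^{-1}} \# x^{-1}, \qquad g = s \vartriangleleft x,
\]
so that the corresponding scalar factor agrees (up to conjugation) with the one appearing in the antipode formula \eqref{antipode}. The cocycle identity for $\tau$ together with the matched-pair compatibilities then yield the standard Hopf $*$-algebra identities: involutivity, anti-multiplicativity, compatibility with $\Delta$ via \eqref{comult}, and the relation $\mathcal{S}(h^{*}) = \mathcal{S}^{-1}(h)^{*}$. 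Because $H$ is finite-dimensional and semisimple over $\C$, such a Hopf $*$-algebra structure extends to a $C^{*}$-algebra structure through the positive-definite Hermitian form coming from the normalized Haar integral, so $H$ becomes a Kac algebra.

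The principal obstacle is the bookkeeping required to verify the $*$-structure axioms and their compatibility with $\Delta$ and $\mathcal{S}$ from the explicit bicrossed product formulas \eqref{mult}--\eqref{antipode}. These computations are, however, the well-established construction of abelian Kac algebra extensions from matched pairs equipped with unitary cocycles (cf.\ the work of Kac and Masuoka on abelian Hopf algebra extensions), so no new idea is required beyond the reduction to a unitary representative of $\tau$ carried out above.
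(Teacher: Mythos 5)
Your overall strategy is essentially the content of the paper's one-line proof, which simply combines Proposition \ref{extension2} with a citation of Masuoka \cite{ma-ff}: the substance of that reference is precisely the reduction you carry out, namely that over $\C$ the datum $(\sigma,\tau)$ of the abelian extension \eqref{exacta2} may be replaced, within its class in $\Opext(k^{\Gamma},k\Z_2)$, by a representative valued in the unit circle $\mathbb{T}$, because $\C^{\times}\simeq \mathbb{T}\times\mathbb{R}_{>0}$ splits equivariantly and $H^{n}(\Gamma,\mathbb{R}_{>0})=0$ for $n\geq 1$ ($\mathbb{R}_{>0}$ being uniquely divisible and $\Gamma$ finite); the bicrossed product built from unitary data then carries its standard Kac algebra structure. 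So the unitarization step, which is the only real idea needed, is correct and matches the paper's route.

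There is, however, a concrete error in the one formula you write down. Your proposed involution $(e_s\# x)^{*}=\overline{\tau_{x}(g^{-1},g)^{-1}}\,e_{g^{-1}}\# x^{-1}$, with $g=s\vartriangleleft x$, is modeled on the antipode \eqref{antipode}, which inverts the $\Gamma$-index because $\mathcal S$ is an anti-coalgebra map on $k^{\Gamma}$. The involution must not invert it: each $e_s\in k^{\Gamma}\subseteq H$ is a self-adjoint idempotent, so $(e_s)^{*}=e_s$ and not $e_{s^{-1}}$. Indeed, with your formula and the multiplication \eqref{mult} one computes $(e_s\# x)(e_s\# x)^{*}=0$ whenever $(s\vartriangleleft x)^{2}\neq 1$, which is incompatible with any $C^{*}$-structure. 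The correct involution on $k^{\Gamma}\,{}^{\tau}\#_{\sigma}k\Z_2$ with unitary cocycles is $(e_s\# x)^{*}=\overline{\sigma_s(x,x^{-1})}\,e_{s\vartriangleleft x}\# x^{-1}$, which here, since $\sigma=1$, is simply $e_{s\vartriangleleft x}\# x^{-1}$; the cocycle $\tau$ does not enter the algebra involution at all, but its unitarity is exactly what makes $\Delta$ a $*$-map, via the identity $\tau_{x^{-1}}(g\vartriangleleft x,h\vartriangleleft x)=\tau_x(g,h)^{-1}=\overline{\tau_x(g,h)}$ coming from the cocycle condition and the normalization $\tau_1=1$. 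With this correction the verification you defer to the literature does go through and is precisely what \cite{ma-ff} supplies.
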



\begin{proof} It follows from Proposition \ref{extension2} and
\cite{ma-ff}. \end{proof}

Let $H$ be as in \eqref{exacta2}. The next corollary describes the
category $\Rep H$ of finite dimensional representations of $H$.
Let $\vect^{\Gamma}$ denote the fusion category of finite
dimensional $\Gamma$-graded vector spaces.

\begin{corollary}\label{representations} There is an action of $\Z_2$ on $\vect^{\Gamma}$ by tensor
autoequivalences such that $\Rep H$ is equivalent to the
$\Z_2$-equivariantization $(\vect^{\Gamma})^{\Z_2}$.
\end{corollary}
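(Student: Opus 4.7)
The plan is to apply the theory of de-equivariantization to the cocentral abelian exact sequence
\[
k \to k^{\Gamma} \to H \overset{\zeta}\to k\Z_2 \to k
\]
established in Proposition \ref{extension2}. First, I would use the cocentrality identity $\zeta(h_1) \otimes h_2 = \zeta(h_2) \otimes h_1$ to build a canonical half-braiding on the pullback tensor functor $\zeta^*: \Rep k\Z_2 \to \Rep H$, thereby lifting it to a fully faithful braided functor $\Rep \Z_2 \hookrightarrow Z(\Rep H)$ into the Drinfeld center of $\Rep H$, with image a symmetric Tannakian subcategory.

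Next I would invoke the standard correspondence (see e.g.\ \cite{ENO, gelaki-nik}) between fusion categories equipped with a central Tannakian subcategory $\Rep G$ and fusion categories equipped with a $G$-action: this yields a fusion category $\mathcal{C}$ with an action of $\Z_2$ such that $\Rep H \simeq \mathcal{C}^{\Z_2}$. Concretely, $\mathcal{C}$ is obtained as the de-equivariantization, namely as the category of modules in $\Rep H$ over the regular algebra $k^{\Z_2}$, viewed as an algebra object in $\Rep H$ via $\zeta^*$.

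The final step is to identify $\mathcal{C}$ with $\vect^{\Gamma}$. The idea is to unravel the de-equivariantization using the bicrossed product decomposition $H \simeq k^{\Gamma}\,{}^{\tau}\#_{\sigma}k\Z_2$ discussed in Subsection \ref{exact-seq}: an $H$-module with a compatible $k^{\Z_2}$-module structure can be shown, by faithful (co)flatness of the abelian extension, to be the same data as a module over the kernel $k^{\Gamma} = H^{\co\,\zeta}$, i.e.\ an object of $\Rep k^{\Gamma} = \vect^{\Gamma}$. The resulting $\Z_2$-action on $\vect^{\Gamma}$ is then read off from the matched-pair action $\vartriangleright : \Z_2 \to \Aut(\Gamma)$ governing the extension.

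The main obstacle I anticipate is tracking how the cocycle $\tau$ interacts with the identification: unlike $\sigma$, which by Remark \ref{sigma} may be normalized to be trivial, the cocycle $\tau$ need not be trivial and could in principle twist the associativity constraints on the equivariantization. Since the statement only asserts the existence of \emph{some} action of $\Z_2$ on $\vect^{\Gamma}$ realizing $\Rep H$ as $(\vect^{\Gamma})^{\Z_2}$, any such twist can be absorbed into the tensor structure on the equivariantized side without obstructing the final equivalence of fusion categories.
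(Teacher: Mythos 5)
Your argument is correct, but it takes a genuinely different route from the paper. The paper's entire proof is a citation: it applies \cite[Proposition 3.5]{extriang} to the cocentral abelian exact sequence of Proposition \ref{extension2}, so all the categorical work is outsourced to that reference, which constructs the equivalence directly from the bicrossed product presentation $k^{\Gamma}\,{}^{\tau}\#_{\sigma}k\Z_2$. You instead re-derive the needed general fact from the equivariantization/de-equivariantization correspondence: the cocentrality identity does give the flip as a half-braiding, so $\zeta^*$ lifts to a fully faithful embedding of the Tannakian category $\Rep\Z_2$ into $Z(\Rep H)$ (fully faithful because $\zeta$ is surjective), and the de-equivariantization $\mathrm{Mod}_{\Rep H}(k^{\Z_2})$ is indeed $\Rep k^{\Gamma}=\vect^{\Gamma}$, recovered from an $H$-module with compatible $k^{\Z_2}$-action by passing to the component over the identity of $\Z_2$. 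What your approach buys is conceptual transparency and independence from \cite{extriang}, at the cost of invoking the (standard but heavier) center machinery; what the paper's citation buys is an explicit description of the resulting $\Z_2$-action, which is used implicitly elsewhere. The one place where your write-up is loose is the last paragraph: the cocycle $\tau$ is not an obstruction to be ``absorbed'' after the fact --- it \emph{is} the monoidal structure of the order-two tensor autoequivalence of $\vect^{\Gamma}$ (the underlying functor being relabelling of the grading by $\theta$), and the compatibility conditions defining the class of $(\sigma,\tau)$ in $\Opext(k^{\Gamma},k\Z_2)$, together with the normalization $\sigma=1$ of Remark \ref{sigma}, are exactly what guarantee the coherence making this a genuine $\Z_2$-action; since the Corollary only asserts the existence of \emph{some} action, this does not affect correctness, but it is worth saying precisely.
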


\begin{proof} It follows from Proposition \ref{extension2}
and \cite[Proposition 3.5]{extriang}. \end{proof}

\begin{remark}\label{omega} For every finite subgroup $\Gamma$ of $\PSL_2(k)$, as listed in (i)--(v) of Subsection \ref{subgrupos}, there is a
subgroup $\widetilde \Gamma$ (often denoted $2\Gamma$) of
$\SL_2(k)$ such that $\widetilde \Gamma / \{ \pm I \} = \Gamma$.

The groups $\widetilde \Gamma$ are called the \emph{binary
polyhedral groups}. These groups are non-split central extensions
of $\Z_2$ by the corresponding polyhedral group:
\begin{equation}\label{binary}0 \to \Z_2 \to \widetilde
\Gamma \to \Gamma \to 1.
\end{equation} Furthermore, we have $Z(\widetilde \Gamma) \simeq
\Z_2$. A binary dihedral  $2$-group is also called a generalized
quaternion group. There are isomorphisms $\widetilde T \simeq
\SL(2, 3)$, $\widetilde I \simeq \SL(2, 5)$  \cite{coxeter},
\cite[Theorem 6.17]{suzuki}.

\smallbreak We shall denote by $\omega \in H^2(\Gamma, \Z_2)$ the
nontrivial cohomology class arising from the central extension
\eqref{binary}.\end{remark}

\begin{remark}\label{schur-mult} For the finite subgroups of $\PSL_2(k)$ listed before, we have
$H^2(\Z_n, k^{\times}) = 1$, $H^2(D_n, k^{\times}) = 1$, for $n$
odd, $H^2(D_n, k^{\times}) \simeq \Z_2$, for  $n$ even, while the
groups $H^2(T, k^{\times})$, $H^2(O, k^{\times})$ and $H^2(I,
k^{\times})$ are all cyclic of order $2$. See \cite[Proposition
4.6.4 and Theorems 4.8.3, 4.8.5]{karpilovsky}.

\smallbreak In particular, if $\Gamma$ is one of the groups $T$,
$O$, $I$ or $D_n$, $n$ even, then $\widetilde \Gamma$ is a
representation group of $\Gamma$. That is, every (irreducible)
projective representation of $\Gamma$ lifts uniquely to an
(irreducible) linear representation of $\widetilde \Gamma$. See
\cite[Chapter 3]{karpilovsky}. \end{remark}

\subsection{Description}\label{descripcion}
Again $H$ is a nontrivial finite dimensional quotient Hopf algebra
of $\A = \Oo_{-1}[\SL_2(k)]$. We shall next determine the
cohomological data arising from the extension \eqref{exacta2}.

\smallbreak Recall that such an extension is determined by a group
automorphism  $\theta \in \Aut \Gamma$ of order $2$, and an
element of the group $\Opext(k^{\Gamma}, k \Z_2)$ associated to
$\theta$.

\smallbreak Let $\alpha_{ij} = \pi(a_{ij})$, $1\leq i, j\leq 2$,
denote the image in $H$ of the standard generators of $\A$. Thus
the $\alpha_{ij}$'s span the simple subcoalgebra $C$ of $H$ such
that $H = k[C]$.

Let also $p: H \to k\Z_2$ denote the (cocentral) Hopf algebra map
with $k^{\Gamma} \simeq H^{\co p} \subseteq H$, and $p(a_{ij})=\delta_{ij}x$,
where $1\neq x \in \Z_2$.

\begin{lemma}\label{coalgebra}
There is an isomorphism of coalgebras $H \to k^{\widetilde
\Gamma}$ preserving the Hopf subalgebra $k^{\Gamma}$.

Identifying $\Z_2 \simeq \widehat{\Z_2}$, the class of the cocycle
$\tau: \Gamma \times \Gamma \to (k^{\Z_2})^{\times}$ arising from
the exact sequence \eqref{exacta2} is determined by
\begin{equation}\label{formula-tau}\tau(s, t)(p) = p\circ
\omega(s, t),
\end{equation} for all $p \in \widehat{\Z_2}$, $s, t \in \Gamma$, where $\omega \in H^2(\Gamma,
\Z_2)$ is as in Remark \ref{omega}. \end{lemma}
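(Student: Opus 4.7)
The plan is to transport the Hopf ideal defining $H$ from $\A$ back to $A = \Oo[\SL_2(k)]$ via the coalgebra isomorphism $\lambda$ of Proposition \ref{extension}, identify the resulting commutative quotient of $A$ with the function algebra on an explicit finite subgroup of $\SL_2(k)$, and then read off the cocycle $\tau$ directly from that explicit coalgebra structure.

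First, let $\J$ denote the kernel of the surjection $\A \to H$. The cocentrality of the projection $p: H \to k\Z_2$ in the exact sequence \eqref{exacta2} implies that $\J$ is $\Z_2$-homogeneous for the grading $\A = \A_+ \oplus \A_-$. By Remark \ref{correspondencia-lambda}, $J := \lambda^{-1}(\J)$ is a $\Z_2$-homogeneous ideal of $A$ (for the original commutative product), and since $\lambda$ is a coalgebra map, $J$ is also a coideal. Hence $A/J$ is a commutative finite-dimensional bialgebra, and because $k$ is algebraically closed of characteristic zero, it is automatically a Hopf algebra of the form $k^G$ for a unique finite group $G$. Passing to the quotient by $J$ in the diagram \eqref{comm-diag} yields an exact sequence of commutative Hopf algebras
$$k \to k^\Gamma \to k^G \to k\Z_2 \to k,$$
which dualizes to a central group extension $1 \to \Z_2 \to G \to \Gamma \to 1$ (centrality coming from the cocentrality of the right hand map). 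The surjection $A \twoheadrightarrow k^G$ realizes $G$ as a finite subgroup of $\SL_2(k)$ containing $\{\pm I\}$, with $G/\{\pm I\} \simeq \Gamma$, so $G$ coincides with the full preimage of $\Gamma$ under $\SL_2(k) \to \PSL_2(k)$, that is, $G = \widetilde\Gamma$. Composing the induced coalgebra isomorphism $H \to A/J$ with the identification $A/J \simeq k^{\widetilde\Gamma}$ produces the desired coalgebra isomorphism, and the left square of \eqref{comm-diag} ensures that it sends the Hopf subalgebra $H^{\co p} \simeq k^\Gamma$ onto the Hopf subalgebra of $k^{\widetilde\Gamma}$ associated with $\Gamma$.

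To identify the class of $\tau$, I would use that it is determined by the coalgebra structure of the extension via the bicrossed product formula \eqref{comult} (with trivial actions and $\sigma = 1$, as already arranged in Remark \ref{sigma}). Choose a normalized set theoretic section $s \mapsto \widetilde s$ of $\widetilde\Gamma \to \Gamma$, so that $\widetilde s\, \widetilde t = z^{\omega(s,t)}\, \widetilde{st}$, where $z$ is the nontrivial central element of order $2$. Under the Fourier identification $\Z_2 \simeq \widehat{\Z_2}$, the elements $e_s \# 1 \leftrightarrow e_{\widetilde s} + e_{z\widetilde s}$ and $e_s \# x \leftrightarrow e_{\widetilde s} - e_{z\widetilde s}$ (with $1 \neq x \in \Z_2$) give a basis of $k^{\widetilde\Gamma}$ adapted to the extension. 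A direct expansion of $\Delta$ on each basis vector using $\Delta(e_t) = \sum_{uv = t} e_u \otimes e_v$, splitting the sum by the $\Z_2$-components of $u$ and $v$ and invoking $\widetilde g\, \widetilde h = z^{\omega(g,h)}\widetilde{gh}$, produces
$$\Delta(e_s \# x) = \sum_{gh = s} (-1)^{\omega(g,h)}\, (e_g \# x) \otimes (e_h \# x),$$
and comparison with \eqref{comult} identifies $\tau_x(g,h) = (-1)^{\omega(g,h)}$, i.e.\ $\tau(s,t)(p) = p \circ \omega(s,t)$ under $\Z_2 \simeq \widehat{\Z_2}$. The main obstacle lies in keeping the three different incarnations of $\Z_2$ straight when matching the cocycles, namely the central $\Z_2 \subset \widetilde\Gamma$, the cocentral quotient $\Z_2$ of $H$, and the dual group $\widehat{\Z_2}$ appearing in the target of $\tau$; once the Fourier identification is fixed the sign computation above is routine.
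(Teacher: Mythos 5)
Your proposal is correct and follows essentially the same route as the paper: transport the defining ideal of $H$ through $\lambda$ to a Hopf ideal of $\Oo[\SL_2(k)]$, identify the resulting commutative quotient as $k^{\widetilde\Gamma}$ via the extension $1\to\Z_2\to\widetilde\Gamma\to\Gamma\to 1$, and read off $\tau$ by expanding the dual coproduct against formula \eqref{comult}. Your Fourier-basis computation of $\tau_x(g,h)=(-1)^{\omega(g,h)}$ is exactly the paper's identification of $k^S$ with $k^{\Gamma}\otimes k^{\Z_2}$ in different notation.
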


\begin{proof} Since $H$ is nontrivial, the commutative diagram
\eqref{comm-diag} can be extended to a commutative diagram with
exact rows
\begin{equation}\label{largo}\begin{CD}k @>>> \A_+ @>>> A @>>> k\Z_2 @>>> k \\
@VVV @VV{=}V        @VV{\lambda}V @VV{=}V @VVV \\
k @>>> \A_+ @>>> \A @>>> k\Z_2 @>>> k\\
@VVV @VV{\pi}V        @VV{\pi}V @VV{=}V @VVV \\
k @>>> k^{\Gamma} @>>> H @>>> k\Z_2 @>>> k,
\end{CD}\end{equation}
where commutativity of the bottom right square follows from
the construction of the exact sequence in the proof of Proposition \ref{exacta-gene}. In particular, $\pi$ is a $\Z_2$-graded map with
respect to the canonical $\Z_2$-gradings on $\A$ and $H$.

Letting $\J \subseteq \A$ denote the kernel of $\pi$, it follows
that $\J$ is a $\Z_2$-homoge\-neous ideal. Then, by Remark
\ref{correspondencia-lambda}, $\lambda^{-1}(\J) = J$ is an ideal
of $A$. Moreover, $J$ is also a coideal of $A$, since $\lambda$ is
a coalgebra isomorphism.

Since the composition $\pi\lambda: A \to H$ is surjective, this
allows us to identify $H \simeq k^{S}$ as coalgebras, where $S \subseteq
\SL_2(k)$ is a finite subset. By the above,  $\lambda^{-1}(\J)$ is
a Hopf ideal of $A$. Then $k^{S}$ is a quotient Hopf algebra of
$A$, that is, $S$ is a subgroup of $\SL_2(k)$. Commutativity of
the diagram \eqref{largo} implies that $S$ is isomorphic to the
binary polyhedral group associated to $\Gamma$.

\medbreak Since $S \simeq \widetilde \Gamma$, then $S$ fits into
an exact sequence \eqref{binary}, and thus $S$ can be identified
with $\Z_2 \times \Gamma$ with the product given, for $s, t \in
\Gamma$, $x, y \in \Z_2$, by the formula
\begin{equation*} (x, s)(y, t) = (xy\omega_0(s, t), st),
\end{equation*}where $\omega_0 \in Z^2(\Gamma, \Z_2)$ represents
the class $\omega$. Then $k^S$ is identified with $k^{\Z_2\times
\Gamma} \simeq k^{\Gamma} \otimes k^{\Z_2}$ with the (dual)
coproduct:
$$\Delta(e_g \otimes p)(x\otimes s \otimes y \otimes t) = (e_g
\otimes p) (xy\omega_0(s, t)\otimes st) = e_g(st) p (xy\omega_0(s,
t)),$$ for all $s, t, g \in \Gamma$, $x, y \in \Z_2$, $p \in
k^{\Z_2}$. Then, for all $p \in \widehat \Z_2 \subseteq k^{\Z_2}$,
we get
\begin{equation*}\Delta(e_g \otimes p) = \sum_{st = g} (p\circ\omega_0)(s,
t)(e_s \otimes p) \otimes (e_t \otimes p). \end{equation*}
Comparing this expression with formula \eqref{comult}, we obtain
formula \eqref{formula-tau} for $\tau$. This finishes the proof of
the lemma. \end{proof}

\begin{remark} In view of Lemma
\ref{coalgebra}, the Hopf algebras $H$ can be regarded as
'deformations' of the binary polyhedral groups $\widetilde
\Gamma$. Observe that, since every Sylow subgroup of $\widetilde
\Gamma$ is either generalized quaternion or cyclic, then
$\widetilde \Gamma$ does not admit cocycle deformations.

Indeed, by the classification of (dual) cocycle deformations in
\cite{movshev, davydov}, such deformations are classified by pairs
$(S, \alpha)$, where $S$ is a subgroup and $\alpha$ is a
nondegenerate $2$-cocyle on $S$. In particular, the order of $S$
is necessarily a square. For the reason we mentioned, $\widetilde
\Gamma$ contains no nontrivial such subgroup which admits a
nondegenerate $2$-cocycle.
\end{remark}

Let $\rho: \Gamma \to \text{GL}(V)$ be a finite dimensional
projective representation of $\Gamma$ on the vector space $V$.
Recall that a factor set of $\rho$ is a $2$-cocycle $\alpha:
\Gamma \times \Gamma \to k^{\times}$ such that
$$\rho(s) \rho(t) = \alpha(s, t) \, \rho(st),$$ for all
$s, t \in \Gamma$. Under linear equivalence, the classes of
projective representations with a given factor set $\alpha$
correspond to isomorphism classes of representations of the
twisted group algebra $k_{\alpha}\Gamma$.

In particular, if $\rho$, $\rho'$ are projective representations
on the vector space $V$, and $X, X' \subseteq k^{\Gamma}$ are the
subspaces of matrix coefficients of $\rho$ and $\rho'$,
respectively, then $\rho$ is linearly equivalent to $\rho'$ if and
only if $X = X'$.

\medbreak In contrast with the notion of linear equivalence, there
is a notion of projective equivalence. Projectively equivalent
representations may give rise to factor sets differing by a
coboundary. They correspond to isomorphic (linear)
representations, when lifted to the representation group.

\medbreak Let $\tau$ be the $2$-cocycle determined by
\eqref{formula-tau}. That is, $\tau_p \in Z^2(\Gamma, k^{\times})$
is a $2$-cocycle representing the class of $p\circ \omega$, where
$1\neq p \in \widehat \Z_2$. Since $|H^2(\Gamma, k^{\times})| \leq
2$, we may and shall assume in what follows that $\tau_p^{-1} =
\tau_p$.

Suppose $\rho$ is a projective representation of $\Gamma$ with
factor set $\tau_p$, that lifts to a self-dual representation of
the group $\widetilde \Gamma$, that is, such that $\rho^*$ is
projectively equivalent to $\rho$. Since $\tau_p^{-1} = \tau_p$,
then $\rho^*$ is indeed linearly equivalent to $\rho$.

In particular, if $X \subseteq k^{\Gamma}$ is the subspace of
matrix coefficients of $\rho$, then we have $\mathcal S(X) = X$.

\begin{lemma}\label{aktion} Suppose that the quotient Hopf algebra
$\Oo_{-1}[\SL_2(k)] \to H$, where $H \simeq k^{\Gamma}
{}^{\tau}\#k\mathbb Z_2$, affords the automorphism $\theta \in
\Aut (\Gamma)$ of order $2$. Then there exists a $2$-dimensional
faithful irreducible projective representation $\rho$ of $\Gamma$
with factor set $\tau_p$ such that  $\rho \circ \theta$ is
linearly equivalent to $\rho$.

Conversely, given such $\rho$ and $\theta$, there exists a unique
noncommutative semisimple Hopf algebra extension $H =
k^{\Gamma}{}^{\tau}\# k\Z_2$ affording the automorphism $\theta$,
such that $H$ is a quotient of $\Oo_{-1}[\SL_2(k)]$. \end{lemma}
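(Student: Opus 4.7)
The plan is to recover $\rho$ directly from the simple subcoalgebra $C$ in the direct implication, and to construct the relevant bicrossed product explicitly in the converse.

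For the direct implication, I would start from the coalgebra decomposition of Remark \ref{coalg-abel}. Since $\Aut(\Z_2)$ is trivial, the $\triangleright$-action of $\Gamma$ on $\Z_2$ in the matched pair is forced to be trivial, so $H = R_e \oplus R_x$ as coalgebras, where $R_x := k^\Gamma \# x \simeq (k_{\tau_p}\Gamma)^*$ and $x$ denotes the nontrivial element of $\Z_2$. The identity $p(\alpha_{ij}) = \delta_{ij}x$ forces $C \subseteq R_x$, so $C$ corresponds, under this coalgebra isomorphism, to a $2$-dimensional irreducible projective representation $\rho\colon\Gamma\to\GL(V)$ with factor set $\tau_p$. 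Faithfulness of $\rho$ follows from $H = k[C]$, since a nontrivial $N = \ker\rho$ would place $C$ inside the proper sub-bicrossed-product associated to $\Gamma/N$.

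To establish $\rho\circ\theta\sim\rho$, I would take $\sigma = 1$ (Remark \ref{sigma}), so that $g := 1\#x$ becomes a grouplike of order two in $H$. A direct computation with \eqref{mult} gives $g(e_s\#x)g^{-1} = e_{\theta^{-1}(s)}\#x$ on $R_x$, which dualizes under the identification $R_x \simeq (k_{\tau_p}\Gamma)^*$ to the algebra automorphism $u_s \mapsto u_{\theta(s)}$ of the twisted group algebra; hence $gCg^{-1} \subseteq R_x$ is precisely the coalgebra of matrix coefficients of $\rho\circ\theta$. The remaining equality $gCg^{-1} = C$ is the subtle step, and I would deduce it by transporting the inner automorphism by $g$ across the coalgebra isomorphism $H \simeq k^{\widetilde\Gamma}$ of Lemma \ref{coalgebra} and checking, by a case-by-case analysis over the binary polyhedral groups, that the isoclass of the $2$-dimensional faithful representation $\tilde\rho$ of $\widetilde\Gamma$ corresponding to $C$ (characterized by $\tilde\rho(z) = -\mathrm{id}$, where $z$ is the nontrivial central element) is fixed by the unique lift $\tilde\theta$ of $\theta$ to $\Aut(\widetilde\Gamma)$.

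For the converse, given $(\rho,\theta)$ as in the statement, the condition $\rho\circ\theta\sim\rho$ forces $\theta^*\tau_p$ to be cohomologous to $\tau_p$, which is the cocycle compatibility required to build the bicrossed product $H := k^\Gamma\,{}^\tau\# k\Z_2$ with $\sigma = 1$ and $\tau$ given by \eqref{formula-tau}. The coalgebra inclusion $(k_{\tau_p}\Gamma)^* \hookrightarrow H$ endows $V$ with a $2$-dimensional faithful $H$-comodule structure, which is self-dual using $\tau_p^{-1}\sim\tau_p$ together with the (automatic) self-duality of faithful $2$-dimensional projective representations of the polyhedral groups. By Proposition \ref{moti} one obtains a surjection $\B(E)\twoheadrightarrow H$; noncommutativity of $H$ rules out the skew-symmetric case (where $\B(E) \simeq \Oo[\SL_2(k)]$ is commutative), so $E$ must be symmetric and Corollary \ref{sym-ssym} yields the desired surjection $\Oo_{-1}[\SL_2(k)]\twoheadrightarrow H$. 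Uniqueness of $H$ for fixed $(\rho,\theta)$ follows from Remark \ref{iso-ext} together with the fact that the pair $(\theta,\tau)$ determines the class in $\Opext(k^\Gamma, k\Z_2)$.

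The main obstacle is the verification of $gCg^{-1} = C$ in the direct implication, which is not automatic from the abstract bicrossed-product structure and relies crucially on $H$ being a quotient of $\Oo_{-1}[\SL_2(k)]$. Translating the inner automorphism by $g$ into a concrete operation on the $k^{\widetilde\Gamma}$-side of Lemma \ref{coalgebra} and using the character theory of binary polyhedral groups to conclude $\tilde\theta$-invariance of $\tilde\rho$ should be the most delicate part of the argument.
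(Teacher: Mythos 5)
You have the right skeleton for the converse, which coincides with the paper's: build $k^{\Gamma}\,{}^{\tau}\#\,k\Z_2$ with $\sigma=1$, verify $\mathcal S(C)=C$ and $k[C]=H$, pass through Proposition \ref{moti} and Corollary \ref{sym-ssym}, and get uniqueness from Remark \ref{iso-ext}. One side-claim there is false, though: faithful $2$-dimensional projective representations of polyhedral groups are \emph{not} automatically self-dual (for $\Gamma=\mathbb A_4$ only one of the three degree-$2$ representations of $k_{\tau_p}\Gamma$ is self-dual), so self-duality of $\rho$ must be taken as part of the hypothesis, since it is what yields $\mathcal S(C)=C$.

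The direct implication has a genuine gap, concentrated exactly where you locate it. First, $g=1\#x$ is \emph{not} a group-like element of $H$ when the class of $\tau_p$ is nontrivial (the case $\Gamma=T,O,I$ and $D_n$, $n$ even): group-likeness of $\sum_s e_s\#x$ requires $\tau_x\equiv 1$, not merely $\sigma=1$; indeed $\mathcal B[\widetilde I]$ has $|G(H)|=1$ by Remark \ref{chardeg}, so no order-two group-like exists there at all. Conjugation by the unit $1\#x$ still gives $\ad_g(e_s\#x)=e_{\theta(s)}\#x$ as an algebra computation, and $gCg^{-1}=C$ is indeed equivalent to $\rho\circ\theta\sim\rho$; but $\ad_g$ is not a coalgebra map, so this is only a reformulation. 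Second, and fatally, the case-by-case verification you propose is of a false statement: $\tilde\theta$ does \emph{not} in general fix the isomorphism class of a faithful $2$-dimensional irreducible of $\widetilde\Gamma$, nor does ``$\tilde\rho(z)=-\id$'' single one out ($\SL(2,5)$ has two such representations, $\SL(2,3)$ three). The paper's own lemma on $\mathbb A_5$ shows that when $\theta$ is induced by a transposition in $\mathbb S_5$, $\tilde\theta$ interchanges the two faithful $2$-dimensionals; the conclusion of Lemma \ref{aktion} fails for that $\theta$ precisely because such an $H$ is not a quotient of $\Oo_{-1}[\SL_2(k)]$. So the hypothesis must do real work, and the paper feeds it in through the antipode rather than through conjugation: $\mathcal S(C)=C$ holds because $C$ is the image of the standard self-dual subcoalgebra of $\Oo_{-1}[\SL_2(k)]$; formula \eqref{antipode} (with $\sigma=1$ and trivial $\vartriangleright$) gives $\mathcal S(X\#p)=p.\mathcal S(X)\#p$; and $\mathcal S(X)=X$ by self-duality of $\rho$ together with $\tau_p^{-1}=\tau_p$. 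Hence $X\#p=C=\mathcal S(C)=p.X\#p$ forces $p.X=X$, i.e.\ $\rho\circ\theta\sim\rho$, with no case analysis. Replace your conjugation step by this antipode computation.
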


\begin{proof} We shall identify $\Z_2 \simeq \widehat \Z_2$ in what follows.
Let $H \simeq k^{\Gamma} {}^{\tau}\#k\mathbb Z_2$, affording the
automorphism $\theta \in \Aut (\Gamma)$ of order $2$. Since $H$ is
a quotient $\Oo_{-1}[\SL_2(k)] \to H$, there exists
a $4$-dimensional simple subcoalgebra $C\subseteq H$ such that
$k[C] = H$ and $\mathcal S(C) = C$.

In view of Remark \ref{coalg-abel}, $C = X \# p$, where $X$ is
identified with a $4$-dimensional simple subcoalgebra of the
twisted group algebra $(k_{\tau_p}\Gamma)^*$. Let $\rho$ be the
irreducible projective representation corresponding to $X$. In
view of Lemma \ref{coalgebra}, under the canonical coalgebra
isomorphism $H \to k^{\widetilde \Gamma}$, $C$ corresponds to a
self-dual faithful irreducible representation of $\widetilde
\Gamma$ that lifts the projective representation $\rho$. Thus
$\rho$ is a faithful self-dual projective representation.
Moreover, since $\tau_p = \tau_p^{-1}$, then $\rho^*$ is linearly
equivalent to $\rho$; that is, $\mathcal S(X) = X$ in
$k^{\Gamma}$.

Recall that the action of $p \in \Z_2$ on $\Gamma$ is given by the
automorphism $\theta$. In view of formula \eqref{antipode} for the
antipode of $H$, condition $\mathcal S(C) = C$ implies that that
$p . X = \mathcal S(X) = X$, where $.:k\Z_2 \otimes k^{\Gamma} \to
k^{\Gamma}$ is the action transpose to $\vartriangleleft$. This
amounts to the condition that $\rho \circ \theta$ be linearly
equivalent to $\rho$.

Indeed, $p.X$ is the span of matrix coefficients of the projective
representation $\rho \circ \theta$ of $\Gamma$. On the other hand,
if $c = x\# p \in C$, then, in view of formula \eqref{antipode}
for the antipode of $H$, we have
\begin{align*}\mathcal S(c) & =
\sum_s \tau_p(s^{-1}, s)^{-1} x(s) \; e_{(s\vartriangleleft p)^{-1}}\# p \\
& = \sum_s \tau_p((s\vartriangleleft p)^{-1}, s\vartriangleleft
p)^{-1} x(s\vartriangleleft p) \; e_{s^{-1}}\# p,
\end{align*} and the last expression belongs to $p.\mathcal S(X)\# p = p.X \# p$.
Hence $\mathcal S(C) = C$, if and only if, $p.X = X$, if and only
if, $\rho \circ \theta$ is linearly equivalent to $\theta$.

\medbreak Conversely, suppose given such $\theta$ and $\rho$. Let
$1\neq p \in \widehat \Z_2$. We have in particular,
$\theta^*(\tau_p) = \tau_p^{-1}$. This implies that $(1, \tau)$ is
a pair of compatible cocycles, thus giving rise to a Hopf algebra
$H = k^{\Gamma} {}^{\tau}\# k\Z_2$.

Let $X\subseteq k^{\Gamma}$ be the span of the matrix coefficients
of the  $2$-dimensional self-dual faithful irreducible projective
representation $\rho$, and let $C = X \# p \subseteq  H$.

By assumption, $\rho \circ \theta$ is linearly equivalent to
$\rho$. Since the action of $p \in \Z_2$ on $\Gamma$ is given by
the automorphism $\theta$, this implies that $p . X = X$, where
$.:k\Z_2 \otimes k^{\Gamma} \to k^{\Gamma}$ is the action
transpose to $\vartriangleleft$. Hence, as before, formula
\eqref{antipode} for the antipode of $H$ implies that $\mathcal
S(C) = C$. Similarly, formula \eqref{mult} for the multiplication
implies that $k[C] = H$. Since $H$ is not commutative, then $H$
must be a quotient of $\Oo_{-1}[\SL_2(k)]$. This finishes the
proof of the lemma.
\end{proof}

Let us recall from \cite{ma-contemp} some properties of the
nontrivial Hopf algebras $\mathcal A_{4n}$ and $\mathcal B_{4n}$
constructed there.

Consider the presentation of the dihedral group $D_n$ by
generators $s_+$ and $s_-$ satisfying the relations $$s_{\pm}^2 =
(s_+s_-)^n = 1.$$
Both $H = \mathcal A_{4n}$ and $H = \mathcal B_{4n}$ are
extensions $k \to k^{\Z_2} \to H \to kD_n \to k$ associated to the
matched pair $(k^{\Z_2},kD_n)$, where the action of $\Z_2$ on
$D_n$ is given by
\begin{equation}\label{mas-action}a \vartriangleright s_{\pm} =
s_{\mp}. \end{equation}

We shall use the notation $\mathcal A[\widetilde D_n] : = \mathcal
A_{4n}$ and $\mathcal B[\widetilde D_n] : = \mathcal B_{4n}$. Then
$\mathcal A[\widetilde D_n]$ and $\mathcal B[\widetilde D_n]$ are
nonequivalent representatives of classes in
$\Opext(k^{\Z_2},kD_n)$ associated to the action
\eqref{mas-action}. Moreover, every extension arising from this
matched pair is isomorphic to one of $\mathcal A[\widetilde D_n]$
or $\mathcal B[\widetilde D_n]$.

\medbreak Suppose that $\Gamma$ is one of the groups $T \simeq
\mathbb A_4$, $O \simeq \mathbb S_4$, or $I \simeq \mathbb A_5$.

In the first two cases we have $\Aut \Gamma \simeq \mathbb S_4$,
while $\Aut \mathbb A_5 \simeq \mathbb S_5$. In all cases, the
automorphims  $\theta$ of order $2$ of $\Gamma$ are induced by the
adjoint action of a transposition $(.\, .)$, or a product of two
disjoint transpositions $(.\, .) (.\, .)$, viewed as elements in
the corresponding symmetric group.

\medbreak Consider the case where $\Gamma = \mathbb A_4$. In this
case, $k_{\tau_p}\Gamma$ has exactly $3$ irreducible
representations of degree $2$, and only one of them is self-dual:
this can be seen from the fact that $\widetilde{\mathbb A_4}$ is a
representation group of $\mathbb A_4$. See \cite[Table
A.12]{stekolshchik} for the character table of $\widetilde{\mathbb
A_4}$, where the self-dual representation is the one with
character $\chi_3$.

Then, in this case, $\rho$ corresponds to $\chi_3$ and it is
necessarily stable under all automorphisms $\theta$ of $\mathbb
A_4$.

\medbreak Similarly, when $\Gamma = \mathbb S_4$, every
automorphism is inner, and therefore stabilizes all projective
representations.

By Lemma \ref{aktion}, in the cases  $\Gamma = \mathbb A_4$ or
$\mathbb S_4$, we have two nonisomorphic associated Hopf algebras
with a faithful self-dual comodule of dimension $2$. We shall
denote the Hopf algebra $H$ associated to $\theta$ by $\mathcal
A[\widetilde \Gamma]$ or $\mathcal B[\widetilde \Gamma]$, if
$\theta$ corresponds to a transposition or to a product of two
disjoint transpositions, respectively.

\medbreak Now assume that $\Gamma = \mathbb A_5$. It follows from
inspection of the character table of $\widetilde{\mathbb A_5}$
\cite[Table A.19]{stekolshchik} that $k_{\tau_p}\mathbb A_5$ has
$2$ nonequivalent irreducible representations of degree $2$, one
of degree $4$ and one of degree $6$. Moreover, we have

\begin{lemma} The nonequivalent  projective representations of degree $2$ are
conjugated by an outer automorphism of $\mathbb A_5$ induced by a
transposition in $\mathbb S_5$. \end{lemma}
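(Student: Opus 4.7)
The plan is to lift the projective picture to a linear one. Recall from Remark \ref{schur-mult} that, since $H^2(\mathbb A_5, k^\times)\simeq \Z_2$, the binary icosahedral group $\widetilde{\mathbb A_5}\simeq \SL(2,5)$ is a representation group for $\mathbb A_5$; hence every projective representation of $\mathbb A_5$ with factor set $\tau_p$ lifts uniquely to a linear representation of $\widetilde{\mathbb A_5}$ on which the center $Z(\widetilde{\mathbb A_5})\simeq\Z_2$ acts by the sign character, and this correspondence respects linear equivalence. Under this bijection, the two nonequivalent irreducible projective representations of degree $2$ of $\mathbb A_5$ correspond to the two $2$-dimensional ``spin'' irreducible characters of $\SL(2,5)$, say $\rho_1$ and $\rho_2$, which appear in the character table \cite[Table A.19]{stekolshchik}.

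Next, I would inspect that character table to locate where $\rho_1$ and $\rho_2$ differ: they agree on every conjugacy class of $\SL(2,5)$ except on the two classes of elements of order $5$ (and correspondingly on the two classes of order $10$), where their character values are the Galois conjugate algebraic integers $\phi=(1+\sqrt 5)/2$ and $\bar\phi=(1-\sqrt 5)/2$, exchanged between $\rho_1$ and $\rho_2$. In particular, once we show that these two classes of order $5$ in $\widetilde{\mathbb A_5}$ project to the two distinct conjugacy classes of $5$-cycles in $\mathbb A_5$ and are swapped by some automorphism induced by a transposition, we will obtain $\rho_2\simeq \rho_1\circ\tilde\theta$ for an appropriate lift $\tilde\theta$.

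The key group-theoretic input is that the two classes of $5$-cycles in $\mathbb A_5$ (for instance the class of $(1\,2\,3\,4\,5)$ and that of $(1\,3\,5\,2\,4)$) fuse into a single class in $\mathbb S_5$; equivalently, conjugation by any transposition $\theta\in\mathbb S_5\setminus\mathbb A_5$ interchanges these two $\mathbb A_5$-classes. Since $\widetilde{\mathbb A_5}$ is the universal central extension of $\mathbb A_5$, the outer automorphism $\theta\in\Aut(\mathbb A_5)$ lifts to an automorphism $\tilde\theta\in\Aut(\widetilde{\mathbb A_5})$ that permutes the two corresponding conjugacy classes of order $5$ and fixes all other classes setwise. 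Evaluating characters yields $\chi_{\rho_1}\circ\tilde\theta=\chi_{\rho_2}$, so $\rho_1\circ\tilde\theta\simeq\rho_2$ as linear representations of $\widetilde{\mathbb A_5}$, and descending to projective representations we obtain the desired linear equivalence over $\mathbb A_5$ via the automorphism $\theta$.

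The only delicate point is the existence and behavior of the lift $\tilde\theta$; I expect this to be the main (but minor) obstacle. It can be handled either by the universality of the Schur cover (so that $\Aut(\widetilde{\mathbb A_5})\to\Aut(\mathbb A_5)$ is surjective) or, more concretely, by exhibiting $\tilde\theta$ as conjugation by a suitable element of $\GL(2,5)$ normalizing $\SL(2,5)$ and reducing to a transposition modulo the center. Either approach is standard and completes the argument.
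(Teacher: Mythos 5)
Your proposal is correct and follows essentially the same route as the paper: both arguments rest on the character table of $\widetilde{\mathbb A_5}\simeq\SL(2,5)$ from \cite[Table A.19]{stekolshchik}, the observation that the two degree-$2$ characters differ only by exchanging $\varphi^{\pm}$ on the classes over the $5$-cycles, and the fact that conjugation by a transposition swaps the two $\mathbb A_5$-classes of $5$-cycles. The only (cosmetic) difference is that you lift the automorphism $\theta$ to the Schur cover and compare linear characters there, whereas the paper pushes the characters down to projective characters of $\mathbb A_5$ via a section of $\pi$ and invokes \cite[Theorem 7.1.11]{karpilovsky}.
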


\begin{proof} Consider the presentation of $\widetilde{\mathbb
A_5}$ by generators $a$ and $b$ and relations $a^5 = b^3 = (ab)^2
= -1$. There are in $\widetilde{\mathbb A_5}$ nine classes under
conjugation represented, respectively, by $1$, $-1$, $a$, $a^2$,
$a^3$, $a^4$, $b$, $b^2$ and $ab$. According to \cite[Table
A.19]{stekolshchik}, $\widetilde{\mathbb A_5}$ has two
nonequivalent irreducible representations of degree $2$, whose
characters $\chi$ and $\chi'$ are determined, respectively, by the
following table:
$$\begin{tabular}{|l|l|l|l|l|l|l|l|l|l|} \hline
{\quad } & {$1$ \;} & {$-1$ \;} & {$a$ \;} & {$a^2$ \;} & {$a^3$ \;} & {$a^4$ \;} & {$b$ \;} & {$b^2$ \;} & {$ab$ \;} \\
\hline $\chi$ & $2$ & $-2$ & $\varphi^+$ & $-\varphi^-$ &
$\varphi^-$ & $-\varphi^+$ & $1$ & $-1$ & $0$ \\ \hline $\chi'$ &
$2$ & $-2$ & $\varphi^-$ & $-\varphi^+$ & $\varphi^+$ &
$-\varphi^-$ & $1$ & $-1$ & $0$ \\ \hline \end{tabular}$$ where
$\varphi^{\pm} = \frac{1\pm \sqrt{5}}{2}$.

Let $\rho, \rho': \widetilde{\mathbb A_5} \to \SL_2(k)$ be the
irreducible representations with characters $\chi$, $\chi'$.
Consider the surjective group homomorphism $\pi:
\widetilde{\mathbb A_5} \to \mathbb A_5$ with kernel $\{ \pm 1\}$,
such that $\pi(a) = (12345)$ and $\pi(b) = (153)$.

Letting $\overline s$ denote the element $\pi(s)$, $s \in
\widetilde{\mathbb A_5}$, the classes of $\mathbb A_5$ under
conjugation are represented by $\overline 1 = 1$, $\overline a =
(12345)$, $\overline{a^3}$, $\overline b$ and $\overline{ab}$. We
have also that the class of $\overline{a^3}$ coincides with the
class of $(21345)$.

After composing with a suitable section $\mathbb A_5 \to
\widetilde{\mathbb A_5}$ of $\pi$, $\rho$ and $\rho'$ define
irreducible projective representations $\overline \rho$ and
$\overline \rho'$ of $\mathbb A_5$, respectively. The projective
characters are determined by the following table:
$$\begin{tabular}{|l|l|l|l|l|l|} \hline
{\quad } & {$1$ \;} &  {$(12345)$ \;} & {$(21345)$ \;} &  {$(153)$ \;} &  {$(12)(34)$ \;} \\
\hline $\overline\chi$ & $2$ &  $\varphi^+$ & $\varphi^-$ & $1$ &
$0$ \\ \hline $\overline \chi'$ & $2$ &  $\varphi^-$ &
$\varphi^+$ & $1$ & $0$ \\ \hline \end{tabular}$$ In particular,
$\overline \rho$ and $\overline \rho'$ are not equivalent.
Furthermore, if $\theta$ is the automorphism determined by the
adjoint action of the transposition $(12)$, then we see from the
table above, that $\overline \chi' = \overline\chi \circ \theta$.
This implies the statement on the projective representations in
view of \cite[Theorem 7.1.11]{karpilovsky}. \end{proof}

In view of the above lemma and Lemma \ref{aktion}, when the
automorphism $\theta$ corresponds to a transposition in $\mathbb
S_5$, the bicrossed product associated to $\theta$ has no
irreducible self-dual comodule of dimension $2$. When $\theta$
corresponds to a product of two disjoint transpositions, the
automorphism is inner and we have an associated Hopf algebra that
we shall denote $\mathcal B[\widetilde{\mathbb A_5}]$, as before.

\begin{theorem}\label{explicito} Let $H$ be a nontrivial Hopf algebra quotient of
$\Oo_{-1}[\SL_2(k)]$. Then $H$ is isomorphic to exactly one of the
Hopf algebras $\mathcal B[\widetilde I]$,  $\mathcal A[\widetilde
\Gamma]$ or $\mathcal B[\widetilde \Gamma]$, where $I \simeq
\mathbb A_5$, and $\Gamma$ is one of the groups $D_n$, $n \geq 2$,
$T \simeq \mathbb A_4$, $O \simeq \mathbb S_4$.
\end{theorem}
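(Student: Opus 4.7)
The plan is to apply the structural results already established in this section and then carry out a case analysis on the polyhedral group $\Gamma$. By Proposition \ref{extension2}, $H$ fits into a cocentral abelian exact sequence
\[k\to k^{\Gamma}\to H\to k\Z_2\to k,\]
with $\Gamma$ a non-cyclic polyhedral group of even order, hence isomorphic to one of $D_n$ ($n\geq 2$), $T\simeq\mathbb A_4$, $O\simeq\mathbb S_4$, or $I\simeq\mathbb A_5$. By Remark \ref{sigma} we may take $\sigma=1$, so $H\simeq k^{\Gamma}{}^{\tau}\#k\Z_2$; by Remark \ref{iso-ext} the isomorphism class of $H$ is determined by the order-two automorphism $\theta\in\Aut(\Gamma)$ arising from the matched pair together with the class of $\tau$; and Lemma \ref{coalgebra} pins down $\tau$ as the cocycle pulled back along $p\in\widehat{\Z_2}$ from the central extension class $\omega$ of $\widetilde\Gamma\to\Gamma$. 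Thus the only free parameter is $\theta$.

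Next I would invoke Lemma \ref{aktion}: the admissible pairs $(\Gamma,\theta)$ are exactly those for which there exists a faithful self-dual $2$-dimensional irreducible projective representation $\rho$ of $\Gamma$ with factor set $\tau_p$ satisfying $\rho\circ\theta\simeq\rho$. In view of Remark \ref{schur-mult}, such $\rho$ lifts to a faithful self-dual $2$-dimensional irreducible linear representation of the representation group $\widetilde\Gamma$, so the problem reduces to reading off, from the character tables of $\widetilde T$, $\widetilde O$, and $\widetilde I$, which $\Aut(\Gamma)$-orbits of such representations are stabilized by each involution of $\Gamma$.

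The case analysis then proceeds as follows, and is the main obstacle. For $\Gamma=D_n$, the two nonequivalent Masuoka extensions $\mathcal A[\widetilde D_n]$ and $\mathcal B[\widetilde D_n]$ recalled in the introduction exhaust the possibilities for the relevant matched pair. For $\Gamma=T\simeq\mathbb A_4$ and $\Gamma=O\simeq\mathbb S_4$, the unique relevant faithful self-dual projective $\rho$ is stabilized by every order-two automorphism of $\Gamma$, and the two $\Aut(\Gamma)$-conjugacy classes of involutions (transpositions versus products of two disjoint transpositions in the ambient symmetric group) produce exactly the two algebras $\mathcal A[\widetilde\Gamma]$ and $\mathcal B[\widetilde\Gamma]$. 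For $\Gamma=I\simeq\mathbb A_5$, the lemma preceding the theorem shows that the outer involution (a transposition in $\mathbb S_5$) swaps the two nonequivalent faithful $2$-dimensional projectives, so only the inner involution qualifies, yielding the single Hopf algebra $\mathcal B[\widetilde I]$. Pairwise non-isomorphism of the listed Hopf algebras follows from Remark \ref{iso-ext}, since any isomorphism $H\simeq H'$ would restrict to one between the adjoint Hopf subalgebras and hence identify both $\Gamma$ and $\theta$, and their existence as quotients of $\Oo_{-1}[\SL_2(k)]$ comes from the converse direction in Lemma \ref{aktion}.
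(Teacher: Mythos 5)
Your overall strategy coincides with the paper's: reduce via Proposition \ref{extension2}, Remark \ref{sigma}, Lemma \ref{coalgebra} and Remark \ref{iso-ext} to the single parameter $\theta\in\Aut(\Gamma)$ of order $2$, then use Lemma \ref{aktion} to decide which $(\Gamma,\theta)$ actually occur, with isomorphism classes governed by conjugacy of $\theta$ in $\Aut(\Gamma)$. Your treatment of $\Gamma=T,O,I$ is exactly the paper's (including the role of the lemma on the two degree-$2$ projective characters of $\mathbb A_5$ being swapped by an outer involution). One small caveat: the appeal to Remark \ref{schur-mult} to lift $\rho$ to $\widetilde\Gamma$ as a \emph{representation group} is only valid for $T$, $O$, $I$ and $D_n$ with $n$ even, since $H^2(D_n,k^\times)=1$ for $n$ odd; this is harmless where you use it, but should not be stated for all $\Gamma$.

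The genuine gap is the dihedral case. You assert that Masuoka's $\mathcal A[\widetilde D_n]$ and $\mathcal B[\widetilde D_n]$ ``exhaust the possibilities for the relevant matched pair,'' but Masuoka's classification applies to the specific matched pair $(k^{\Z_2},kD_n)$ with the swap action \eqref{mas-action} on the two generating reflections, whereas what you have at this point is an extension of the other shape, $k\to k^{D_n}\to H\to k\Z_2\to k$, governed by some a priori unknown involution $\theta$ of $D_n$ (and $\Aut(D_n)$ has several conjugacy classes of involutions). Identifying the matched pair — i.e., producing the central copy of $k\Z_2$, showing the quotient group is $D_n$ generated by two involutions $x,y$, and recovering the action \eqref{mas-action} on $\{x,y\}$ — is precisely the content of the paper's Lemma \ref{dn}, which occupies a nontrivial argument using the fusion rules \eqref{nodd}--\eqref{neven}, the centrality of $kG[\chi]$, and the image of $\chi$ in $kF$. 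Without this step (or, alternatively, a complete enumeration of the admissible involutions of $D_n$ and the matching of the resulting bicrossed products with $\mathcal A_{4n}$, $\mathcal B_{4n}$), the $D_n$ branch of the theorem is not established.
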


\begin{proof} By Proposition \ref{extension2} and Theorem \ref{explicito},
since  $H$ is nontrivial, then $H$ is an extension
\eqref{exacta2}. Hence $H$ corresponds to the  class of some pair
$(\sigma, \tau)$ in $\Opext(k^{\Gamma}, k\Z_2)$. By Lemma
\ref{coalgebra}, we know that $\tau_p$ represents the class $p
\circ \omega$. Moreover, by \cite[Lemma 1.2.5]{pqq}, \cite[Theorem
4.4]{mastnak}, we may assume that $\sigma = 1$. Moreover, by
Remark \ref{iso-ext}, two quotients are isomorphic if and only if
the exact sequences are isomorphic.

Consider the case where $\Gamma$ is one of the groups $T$, $O$ or
$I$. In view of Lemma \ref{aktion} and the previous discussion, it
remains to determine which possible automorphisms $\theta$,
$\theta'$, give isomorphic Hopf algebras $H$, $H'$. Since a Hopf
algebra isomorphism $H \to H'$ is an isomorphism of extensions,
this is the case if and only if the associated matched pairs are
isomorphic, which amounts to the automorphisms $\theta$ and
$\theta'$ being conjugated in $\Aut(\Gamma)$.

\medbreak The case where $\Gamma = D_n$, $n \geq 2$, will be
treated separately in Lemma \ref{dn}. This will imply, as a
byproduct, a comparison with the nontrivial examples studied by
Masuoka in \cite{ma-contemp}. \end{proof}

The following facts about fusion rules for the irreducible
characters of $k^{D_n}$, $n \geq 2$, will be used in the proof of
our next lemma.

If $n$ is odd, $k^{D_n}$ has two irreducible characters $1$, $a$,
of degree $1$, and $r = (n-1)/2$ irreducible characters of degree
$2$, $\chi_1, \dots, \chi_r$, such that
\begin{equation}\label{nodd}a\chi_i = \chi_i, \quad \chi_i\chi_j =
\begin{cases}\chi_{i+j} + \chi_{|i-j|}, \; i+j \leq r,\\ \chi_{n-(i+j)} + \chi_{|i-j|}, \; i+j > r,
\end{cases}\end{equation}
where $\chi_0 = 1 + a$.

On the other hand, if $n$ is even, $k^{D_n}$ has four irreducible
characters $1$, $a$, $b$, $c = ab$, of degree $1$, and $r = n/2-1$
irreducible characters of degree $2$, $\chi_1, \dots, \chi_r$,
such that \begin{equation}\label{neven}a\chi_i  = \chi_i, \quad
b\chi_i = \chi_{n/2-i}, \quad \chi_i\chi_j =
\begin{cases}\chi_{i+j} + \chi_{|i-j|}, \; i+j \leq n/2,\\ \chi_{n-(i+j)} + \chi_{|i-j|}, \; i+j > n/2,
\end{cases}\end{equation}
where $\chi_0 = 1 + a$, $\chi_{n/2} = b+c$.

\begin{lemma}\label{dn} Let $H$ be a nontrivial Hopf algebra quotient of
$\Oo_{-1}[\SL_2(k)]$ such that $H$ fits into an exact sequence
\eqref{exacta2}, with $\Gamma = D_n$, $n \geq 2$. Then $H$ is
isomorphic to one of the Hopf algebras $\mathcal A[\widetilde
D_n]$ or $\mathcal B[\widetilde D_n]$. \end{lemma}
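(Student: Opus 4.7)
The plan is to apply Lemma~\ref{aktion} in the specific case $\Gamma = D_n$ and show that the classifying data is essentially determined, producing at most two isomorphism classes.

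By the proof of Theorem~\ref{explicito} and Lemma~\ref{aktion}, together with Remark~\ref{sigma}, the Hopf algebra $H$ is isomorphic to a bicrossed product $k^{D_n}{}^{\tau}\#k\Z_2$ determined by an order-$2$ automorphism $\theta \in \Aut(D_n)$ and a cocycle $\tau\in Z^2(D_n,(k^{\Z_2})^{\times})$ with $\sigma=1$. By Lemma~\ref{coalgebra}, the cocycle $\tau_p$ represents the class $p\circ\omega\in H^2(D_n, k^{\times})$ coming from the binary dihedral extension, and by Lemma~\ref{aktion} there exists a $2$-dimensional faithful self-dual irreducible projective representation $\rho$ of $D_n$ with factor set $\tau_p$ such that $\rho\circ\theta$ is linearly equivalent to $\rho$. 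Two such pairs $(\theta,\tau)$ give isomorphic Hopf algebras if and only if the associated matched pairs are isomorphic, i.e.\ the automorphisms are conjugate in $\Aut(D_n)$.

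The next step is to pin down $\rho$. The canonical embedding $\widetilde D_n\subset \SL_2(k)$ affords a faithful self-dual $2$-dimensional linear representation of $\widetilde D_n$ which descends to a projective representation $\rho_0$ of $D_n$ whose factor set represents $p\circ\omega$. For $n$ odd, $H^2(D_n,k^{\times})=0$, so $\rho_0$ is an ordinary irreducible representation of $D_n$, and every faithful self-dual irreducible $2$-dimensional linear representation is related to $\rho_0$ by an outer automorphism. For $n$ even, $H^2(D_n,k^{\times})\cong \Z_2$, and $p\circ\omega$ represents the nontrivial class; $\rho_0$ is again the (essentially unique up to outer automorphisms) faithful irreducible projective representation with the prescribed factor set.

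Now I would analyze the automorphisms $\theta$. Inner automorphisms produce isomorphic matched pairs, so only the outer class of $\theta$ matters. Using the constraint $\rho\circ\theta\simeq\rho$, together with faithfulness of $\rho$, and exploiting the fusion rules \eqref{nodd}--\eqref{neven} to control how $\theta$ permutes the irreducible characters of $k^{D_n}$, I would show that up to inner conjugation $\theta$ must be the involution $s_\pm\mapsto s_\mp$, i.e.\ the swap of two generating reflections in the presentation given in the text. This is precisely the matched pair action used by Masuoka to construct $\mathcal{A}[\widetilde D_n]$ and $\mathcal{B}[\widetilde D_n]$.

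With $\theta$ so fixed, the remaining freedom lies in the choice of class in $\Opext(k^{D_n},k\Z_2)$ with $\sigma=1$ and $\tau_p$ representing $p\circ\omega$. By the computations in \cite{ma-contemp} (whose relevant part is recorded in the appendix on the special $\Opext$ groups), there are exactly two nonequivalent such classes giving rise to noncommutative Hopf algebras, and they correspond respectively to $\mathcal{A}[\widetilde D_n]$ and $\mathcal{B}[\widetilde D_n]$. This yields the lemma. The main obstacle in the argument is the third step: pinning down $\theta$, up to inner conjugation, as the swap of $s_+$ and $s_-$. The subtle point is that $\Aut(D_n)$ grows with $n$, and one must combine the existence and uniqueness (up to outer automorphism) of the required projective $\rho$ with the self-duality and faithfulness constraints, handling $n$ odd and $n$ even separately because of the different Schur multipliers.
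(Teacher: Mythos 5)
Your route is genuinely different from the paper's, and it is worth saying why the paper does not take it. You stay inside the cocentral sequence $k\to k^{D_n}\to H\to k\Z_2\to k$ and try to classify the pairs $(\theta,\tau)$ directly via Lemma \ref{aktion}; the paper uses that mechanism only for $\Gamma=T,O,I$ and explicitly defers $D_n$ to a separate argument. Its proof of Lemma \ref{dn} instead shows that $G[\chi]=\{1,a\}$ with $kG[\chi]$ \emph{central} in $H$ (using the fusion rules \eqref{nodd}--\eqref{neven} and the $n=4$ exception handled by \cite{kashina}), derives from this a second, central extension $k\to k^{\Z_2}\to H\to kF\to k$, identifies $F\simeq D_n$ and the action with \eqref{mas-action}, and then quotes Masuoka's classification of \emph{all} extensions of that matched pair (\cite[Proposition 3.11]{ma-contemp}). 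The cocycle bookkeeping is thereby outsourced to $\Opext(k^{\Z_2},kD_n)$, not to $\Opext(k^{D_n},k\Z_2)$.

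The gap in your argument is the third step, and I do not think its conclusion is correct as stated. Write $D_n=\langle r,s\rangle$ with $r=s_+s_-$. The constraint $\rho\circ\theta\simeq\rho$ for a faithful $\rho$ does force $\theta(r)=r^{\pm1}$, but it does not reduce $\theta$ to the swap up to inner conjugation. For $n\equiv 2 \pmod 4$ the map $\theta_0\colon r\mapsto r$, $s\mapsto r^{n/2}s$ is an outer involution which is \emph{not} conjugate to the swap in $\Aut(D_n)$ (conjugate automorphisms act by the same power on the characteristic subgroup $\langle r\rangle$, and the swap inverts $r$), yet it lifts to $\widetilde{D_n}$ and satisfies $\rho_i\circ\theta_0\simeq\rho_i$ for every faithful $2$-dimensional projective $\rho_i$, since the corresponding characters agree (reflections have trace $0$). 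Conversely, for $n$ odd every admissible $\theta\neq\id$ is inner, so $\mathcal A[\widetilde D_n]$ and $\mathcal B[\widetilde D_n]$ afford conjugate $\theta$'s and can only be separated by the cocycle class; your final step does not control this, because Lemma \ref{z2-gral} of the appendix assumes $H^2(N,k^{\times})\simeq\Z_2$ (false for $N=D_n$, $n$ odd), and because the count you import from \cite{ma-contemp} concerns $\Opext(k^{\Z_2},kD_n)$ for the other matched pair. So both halves of your reduction — pinning down the conjugacy class of $\theta$, and counting admissible classes in $\Opext(k^{D_n},k\Z_2)$ over a fixed $\theta$ — leak cases (and, via Remark \ref{iso-ext}, a stray conjugacy class of $\theta$ cannot be absorbed afterwards by a Hopf algebra isomorphism). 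This is precisely the difficulty the paper sidesteps by changing exact sequences.
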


By \cite[Proposition 3.13]{ma-contemp} the Hopf algebras $\mathcal
A[\widetilde D_n]$ and $\mathcal B[\widetilde D_n]$ are not
isomorphic, if $n \geq 3$.

Note that the fusion rules for $\mathcal A[\widetilde D_n]$ or
$\mathcal B[\widetilde D_n]$ are the same as those for
$k^{D_{2n}}$, as given by \eqref{neven} \cite[Proposition
3.9]{ma-contemp}. Then $\mathcal A[\widetilde D_n]$, $\mathcal
B[\widetilde D_n]$ do satisfy the assumptions in the lemma.

\begin{proof} In view of previous classification results, we shall
assume in the proof that $n \neq 2$. That is, we shall not
consider the case $\dim H = 8$, where the result is well-known.

Formula \eqref{comult} for the comultiplication in $H$ implies
that $H = (k\Gamma)^* \oplus (k_{\tau_p}\Gamma)^*$, where
$k_{\tau_p}\Gamma$ is the twisted group algebra, and $1 \neq p \in
\Z_2$. In particular, every irreducible $H$-comodule has dimension
$1$ or $2$. See \cite[Theorem 3.7.3]{karpilovsky}.

Let $\chi \in C$ be the irreducible character of $C$. Then,
decomposing $\chi\chi^* = \chi^2$ into a sum of irreducible
characters, we get  $|G[\chi]| = 2$. Indeed, the assumption that
$k[C] = H$ together with $\dim H > 8$, imply that $|G[\chi]| \neq
4$.

\medbreak Let $G[\chi] = \{1, a\}$, where $a^2 = 1$. Then $\chi^2
= 1 + a + \lambda$, where $\lambda$ is an irreducible character of
degree $2$, such that $a \in G[\lambda]$. Thus $k[\lambda] =
H_{\coad} \simeq k^{D_n}$. In particular, $kG[\chi]$ is normal in
$H_{\coad}$. We claim that $kG[\chi]$ is a central Hopf subalgebra
of $H$.

Since $a\lambda = \lambda = \lambda a$, and the simple
subcoalgebra containing $\lambda$ generates $H_{\coad}$, then the
quotient Hopf algebra $H_{\coad}/H_{\coad}(kG[\chi])^+$ is
cocommutative, by \cite[Remark 3.2.7 and Corollary 3.3.2]{ssld}.

This implies that $a\mu = \mu = \mu a$, for all irreducible
character $\mu \in H_{\coad}$ of degree $2$. By assumption, $\dim
H > 8$.  and taking into account the fusion rules for the
irreducible characters in $k^{D_n}$ in \eqref{nodd} and
\eqref{neven}, we may assume that $a\in k^{D_n}$ is the only
nontrivial group-like element with this property.

Indeed, otherwise, we would have $n$ even and $b\chi_1 = \chi_1$,
implying that $n = 4$. Then $\dim H = 16$ and $|G(H)| = 4$, thus
the lemma follows in this case from \cite{kashina}.

\medbreak Hence $a$ must be stable under the action of $\Z_2$
coming from the sequence \eqref{exacta2}, because this is an
action by Hopf algebra automorphisms. Hence $kG[\chi]$ is a normal
(therefore central) Hopf subalgebra of $H$, as claimed.

\medbreak Moreover, since $aC = C = Ca$, and $C$ generates $H$,
then as before $H/H(kG[\chi])^+$ is cocommutative. Therefore we
get an exact sequence \begin{equation}\label{unamas}k \to k^{\Z_2}
\to H \to kF \to k,\end{equation} where $F$ is a group of order
$2n$.

We shall show next that $F \simeq D_n$ and the action $\Z_2 \times
F \to F$ associated to the exact sequence \eqref{unamas} coincides
with \eqref{mas-action}. This will imply the lemma, in view of
\cite[Proposition 3.11]{ma-contemp}.

\medbreak Denote by $\pi: H \to kF$ the projection in
\eqref{unamas}. We may decompose $\pi(\chi) = x + y$, where $x, y
\in F$. Moreover, since $\chi$ is a faithful character, then $x$
and $y$ generate $F$.

Since $\chi^* = \chi$, then either $x^2 = y^2 = 1$ or $x =
y^{-1}$. If the last possibility holds, then $F = \langle x, y
\rangle$ would be cyclic, thus implying that $H$ is commutative,
against the assumption. Therefore $x^2 = y^2 = 1$ in $F$.

On the other hand, we have $\chi^2 = 1 + a + \lambda$, and
$\pi(\chi^2) = x^2 + y^2 + xy + yx = 2. 1 + xy + (xy)^{-1}$. Hence
$\pi(\lambda) = xy + (xy)^{-1}$. But the components of
$\pi(\lambda)$ generate a subgroup of index $2$ of $F$ whose group
algebra is isomorphic to $\pi(H_{\coad}) = \pi(k^{D_n})$. Hence
$\langle xy \rangle \simeq \Z_n$. Thus we obtain the relation
$(xy)^n = 1$. Hence $F \simeq D_n$.

\medbreak As in the proof in \cite[pp. 209]{ma-contemp}, the
simple subcoalgebra $C$ is a left $k\langle a \rangle$-submodule,
and its image in $kF$ is spanned by the group-likes $x$ and $y$.
Then $C$ is a crossed product $C = k^{\Z_2}\# k\{x, y\}$, and
therefore the set $\{x, y\}$ is an orbit under the action of
$\Z_2$. Letting $s_+ = x$, $s_- = y$, we recover the action
\eqref{mas-action}. This finishes the proof of the lemma.
\end{proof}

\medbreak The following lemma gives the degrees of the irreducible
representations  of $H$.

\begin{lemma}\label{irrep-h}  Let $p$ be the order of the subgroup
$\Gamma^{\Z_2}$ of fixed points of $\Gamma$ under the action of
$\Z_2$. Then $H$ has $2p$ irreducible representations of degree
$1$ and the remaining ones are of degree $2$. In particular,
$|G(H^*)| \geq 4$.
\end{lemma}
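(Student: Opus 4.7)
The plan is to reduce the problem to a standard Mackey-type enumeration for a smash product algebra. First, by Remark~\ref{sigma}, I may assume the $2$-cocycle $\sigma$ in the presentation $H = k^{\Gamma}\,{}^{\tau}\#_{\sigma}k\Z_2$ is trivial. The multiplication formula \eqref{mult} then reduces to $(e_s\# x)(e_t\# y) = \delta_{s\vartriangleleft x,\, t}\, e_s\# xy$, which exhibits $H$, as an algebra, as the smash product $k^{\Gamma}\# k\Z_2$ for the action of $\Z_2$ on $k^{\Gamma}$ dual to $\vartriangleleft$ (equivalently, the action by the automorphism $\theta$ of Lemma~\ref{aktion}). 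The coalgebra structure, which encodes $\tau$ nontrivially, is irrelevant for the purposes of describing simple $H$-modules.

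Next I would invoke the description of simple modules over such a skew group algebra (the trivial-cocycle specialization of \cite{MoW}, exactly as already used in the corollary to Theorem~\ref{shortspecial}): the simple $H$-modules are parametrized by pairs $(\mathcal{O}, U)$, where $\mathcal{O}$ is a $\Z_2$-orbit on $\Gamma$ and $U$ is an irreducible representation of the stabilizer $C_s\leq\Z_2$ of a representative $s\in\mathcal{O}$, realized as $W_{s,U} = \Ind_{k^{\Gamma}\otimes kC_s}^{H}(s\otimes U)$, of dimension $[\Z_2:C_s]\dim U$. Each fixed point $s\in\Gamma^{\Z_2}$ has stabilizer $C_s=\Z_2$ and contributes two one-dimensional $H$-modules, one per character of $\Z_2$, totalling $2p$ modules of degree $1$; each orbit of size $2$ has trivial stabilizer and contributes one irreducible module of degree $2$. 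The dimension identity $2p\cdot 1 + \tfrac{|\Gamma|-p}{2}\cdot 4 = 2|\Gamma| = \dim H$ serves as a sanity check and confirms that these exhaust the simple modules.

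Finally, since one-dimensional $H$-modules are in bijection with the group-like elements of $H^{*}$, the previous count yields $|G(H^{*})| = 2p$, so the last clause reduces to showing $p\geq 2$, that is, $\Gamma^{\Z_2}$ contains a nontrivial element. I would verify this case by case using Theorem~\ref{explicito} and Lemma~\ref{aktion}: for $\Gamma = D_n$ with $\theta$ swapping $s_+$ and $s_-$, the element $(s_+s_-)^{n/2}$ (when $n$ is even) or $(s_+s_-)^{(n-1)/2}s_+$ (when $n$ is odd) is a nontrivial fixed point; for $\Gamma\in\{\mathbb A_4, \mathbb S_4, \mathbb A_5\}$ and $\theta$ conjugation by an involution of the ambient symmetric group, a short inspection of centralizers exhibits a nontrivial fixed element in each admissible case (for instance $(12)(34)\in \mathbb A_4,\mathbb A_5$, or the conjugating involution itself when it already belongs to $\Gamma$, as for $\Gamma=\mathbb S_4$). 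The one delicate step is the reduction to a smash product in the first paragraph; once that is in hand, the enumeration is routine and the lower bound $p\geq 2$ is a purely combinatorial check across the list of cases provided by Theorem~\ref{explicito}.
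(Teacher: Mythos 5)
Your proposal is correct and, for the main assertion, follows the paper's own argument essentially verbatim: the reduction to the smash product $k^{\Gamma}\#k\Z_2$ (justified by Remark \ref{sigma}), the enumeration of the simple $H$-modules as the induced modules $\Ind_{k^{\Gamma}\otimes kC_s}^{H}(s\otimes U_s)$ via \cite{MoW}, and the resulting count of $2p$ one-dimensional representations with all others of degree $2$ are exactly the paper's steps. The one place where you genuinely diverge is the final clause $|G(H^*)|\geq 4$, i.e.\ $p\geq 2$: you verify it case by case over the list of Theorem \ref{explicito}, exhibiting an explicit nontrivial fixed point for each admissible pair $(\Gamma,\theta)$, and your witnesses check out (for instance, for $D_n$ with $n$ odd one has $\theta\bigl((s_+s_-)^{(n-1)/2}s_+\bigr)=(s_+s_-)^{-(n+1)/2}s_+=(s_+s_-)^{(n-1)/2}s_+$ because $(s_+s_-)^n=1$). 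The paper instead settles this in one line by the classical fact that a finite group admitting a fixed-point-free automorphism of order $2$ must be abelian of odd order, which is impossible since $\Gamma$ is a noncyclic polyhedral group of even order. The uniform argument is shorter and independent of the classification of the quotients; your case analysis is equally valid but imports Theorem \ref{explicito} and Lemma \ref{aktion} as prerequisites where none are needed.
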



\begin{proof} As an algebra, $H$ is a smash
product $k^{\Gamma}\#k\Z_2$. Then the irreducible representations
of $H$ are of the form $W_s : = \Ind_{k^{\Gamma}\otimes k C_s}^H
s\otimes U_s$, where $s$ runs over a set of representatives of the
orbits of $g$ in $\Gamma$, $C_s \subseteq \Z_2$ is the stabilizer
of $s$, and $U_s$ is an irreducible representation of $C_s$ \cite{MoW}.

In particular, we have $\dim W_s = [\Z_2: C_s]$. Hence $\dim W_s =
1$ if and only if $s \in \Gamma^{\Z_2}$. Note in addition that
such an automorphism $g$ must have a nontrivial fixed point
(otherwise, $\Gamma$ would be abelian of odd order); whence the
claimed inequality for $|G(H^*)|$.
\end{proof}

\begin{remark}\label{chardeg} Let $H  = \mathcal A[\widetilde \Gamma]$
or $\mathcal B[\widetilde \Gamma]$, be one of the nontrivial Hopf
algebras corresponding to $\widetilde \Gamma$.

Let $p \geq 2$ be the order of the subgroup of invariants  in
$\Gamma$ under the action of $\Z_2$, and let $q = (\dim H -
2p)/4$. Then, as an algebra, $H$ is of type $(1,  2p; 2, q)$. See
Lemma \ref{irrep-h}.
Explicitly, we have the following algebra
types, for each possible isomorphism class:
\begin{align*}
& \mathcal A[\widetilde D_{n}]: (1, 4; 2, n-1), \;\; \mathcal B[\widetilde D_{n}]: (1, 4; 2, n-1), \quad (n \geq 2)\\
& \mathcal A[\widetilde T]: (1, 4; 2, 5), \quad \qquad \mathcal B[\widetilde T]: (1, 8; 2, 4),\\
& \mathcal A[\widetilde O]: (1, 8; 2, 10), \;\, \qquad \mathcal B[\widetilde O]:  (1, 16; 2, 8), \\
& \mathcal B[\widetilde I]: (1, 8; 2, 28). \end{align*} On the
other hand, by Lemma \ref{coalgebra}, $H \simeq k^{\widetilde
\Gamma}$ as coalgebras. Hence the coalgebra types are the
following:
\begin{align*}
& \mathcal A[\widetilde D_{n}], \mathcal B[\widetilde D_{n}], \; n \geq 2: \, (1, 4; 2, n-1),\\
& \mathcal A[\widetilde T], \mathcal B[\widetilde T]: \;  (1, 3; 2, 3; 3, 1),\\
& \mathcal A[\widetilde O], \mathcal B[\widetilde O]: \; (1, 2; 2, 3; 3, 2; 4, 1), \\
& \mathcal B[\widetilde I]: \; (1, 1; 2, 2; 3, 2; 4, 2; 5, 1; 6,
1).
\end{align*}
See for instance \cite[A.5]{stekolshchik}. \end{remark}

\begin{remark}\label{fusion}
Let $H$ be one of the Hopf algebra $\mathcal A[\widetilde{\Gamma}]$
or $\mathcal B[\widetilde{\Gamma}]$. Then the fusion rules of $H$ are the same as the ones of $\widetilde{\Gamma}$.
\end{remark}

\begin{proof}
The result is known if $\widetilde{\Gamma}=\widetilde{D_n}$, see \cite{ma-contemp}.
Assume now that $\widetilde{\Gamma} =\widetilde{T}$, $\widetilde{O}$
or $\widetilde{I}$. It is enough, to describe the fusion rules of $H$, to
describe the product of irreducible characters in $H$. These are, using the previous notation, of  the form $\chi \# 1$, $\chi'\#p$, where $\chi$ is an ordinary irreducible character of $\Gamma$ and  $\chi'$ is an irreducible projective
character of $\Gamma$. In view of the product formula \ref{mult}, we need
to describe the action of $\theta$ on (projective) characters.
If $\theta$ is inner (for $\mathcal B[\widetilde{T}]$, $\mathcal A[\widetilde{O}]$, $\mathcal B[\widetilde{O}]$, $\mathcal B[\widetilde{I}]$) this action is trivial and hence the fusion rules are unchanged.

If $ H = \mathcal A[\widetilde{T}]$, the automorphism $\theta$ permutes
the characters $\chi_2-\chi_3$ and $\chi_4-\chi_5$ in Table A.12 of \cite{stekolshchik}.
However the computation of the fusion rules of $\widetilde{T}$ show that after these
permutations, the fusion rules of $\mathcal A[\widetilde{\Gamma}]$ remain the same.
\end{proof}

\begin{proof}[Proof of Theorem \ref{clasificacion}]
Let $H$ be a semisimple Hopf algebra containing a simple
subcoalgebra $C$ of dimension $4$ such that $\mathcal S(C) = C$.
Let $k[C] \subseteq H$ be the Hopf subalgebra generated by $C$.

By Proposition \ref{moti} and Corollary \ref{sym-ssym}, there is a surjective Hopf algebra map $\mathcal H \to k[C]$, where
$\mathcal H = \Oo[\SL_2(k)]$, if $\nu(V) = -1$, and $\mathcal H =
\Oo_{-1}[\SL_2(k)]$, if $\nu(V) = 1$.
Then the proof of the theorem follows from Theorem \ref{explicito}. \end{proof}



\section{Applications}\label{appl}

In this section we shall give some applications of the main
results in previous sections to the classification of semisimple
Hopf algebras.

\subsection{Semisimple Hopf algebras with character degrees at most $2$}
Recall that if  $C$ is a simple subcoalgebra of a semisimple Hopf
algebra, and $\chi \in C$ is the irreducible character contained
in $C$, then $G[\chi]$ is the subgroup of the group-like elements
$g$ such that $g\chi = \chi$, or equivalently, such that $g$
appears with positive multiplicity (necessarily equal to $1$) in
the product $\chi \chi^*$. We shall also denote $B[\chi] =
k[C\mathcal S(C)]$.

\medbreak We give for completeness the proof of the following
lemma that will be used later on. See \cite[Lemma 2.4.1]{ssld}.

\begin{lemma}\label{prod-irred} Let $H$ be a semisimple Hopf algebra and let $\chi, \chi' \in H$ be irreducible characters. Then the
following are equivalent:
\begin{enumerate}\item[(i)] The product $\chi^* \chi'$ is
irreducible.
\item[(ii)] For all irreducible character $\lambda \neq 1$, $m(\lambda, \chi\chi^*) =
0$ or $m(\lambda, \chi'(\chi')^*) = 0$.
\end{enumerate} \end{lemma}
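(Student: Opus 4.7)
The plan is to use the standard orthonormality of the irreducible characters in the character ring of a semisimple Hopf algebra, together with the adjointness properties of the multiplicity bilinear form $m(\cdot,\cdot)$. Recall that $m(\chi,\chi') = \dim \Hom_H(V,V')$ when $\chi,\chi'$ are characters of $H$-modules $V,V'$, and that for irreducible characters one has $m(\chi,\chi') = \delta_{\chi,\chi'}$. Moreover, for any characters $\alpha,\beta,\gamma$, one has the adjunctions
\begin{equation*}
m(\alpha\beta,\gamma) = m(\beta,\alpha^*\gamma) = m(\alpha,\gamma\beta^*).
\end{equation*}

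The key reformulation I would use is that $\chi^*\chi'$ is irreducible if and only if $m(\chi^*\chi',\chi^*\chi') = 1$, since the sum of squares of multiplicities of irreducible constituents equals this inner product, and it is at least $1$ (the character $\chi^*\chi'$ is nonzero as $\chi,\chi'$ are). Applying the adjunctions twice, I would rewrite
\begin{equation*}
m(\chi^*\chi',\chi^*\chi') = m(\chi\chi^*\chi',\chi') = m(\chi\chi^*,\chi'(\chi')^*).
\end{equation*}

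Then I would decompose $\chi\chi^* = \sum_\lambda m(\lambda,\chi\chi^*)\,\lambda$ and $\chi'(\chi')^* = \sum_\lambda m(\lambda,\chi'(\chi')^*)\,\lambda$ as sums over irreducible characters $\lambda$, and use orthonormality to obtain
\begin{equation*}
m(\chi\chi^*,\chi'(\chi')^*) = \sum_{\lambda} m(\lambda,\chi\chi^*)\, m(\lambda,\chi'(\chi')^*).
\end{equation*}
Since $\chi,\chi'$ are irreducible, $m(1,\chi\chi^*) = m(\chi,\chi) = 1$ and similarly $m(1,\chi'(\chi')^*) = 1$, so the trivial character contributes exactly $1$ to this sum. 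Therefore
\begin{equation*}
m(\chi^*\chi',\chi^*\chi') = 1 + \sum_{\lambda\neq 1} m(\lambda,\chi\chi^*)\, m(\lambda,\chi'(\chi')^*),
\end{equation*}
and since all terms in the sum are non-negative integers, this equals $1$ precisely when every summand vanishes, which is exactly condition (ii). There is no real obstacle here; the only thing to be careful about is invoking the adjointness formula in the correct form (using $\chi^{**} = \chi$, which holds because $H$ is cosemisimple).
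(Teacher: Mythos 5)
Your proof is correct and follows essentially the same route as the paper: both reduce irreducibility of $\chi^*\chi'$ to the computation of a single multiplicity (the paper uses $m(1,\zeta\zeta^*)$, you use $m(\zeta,\zeta)$, which agree by Frobenius reciprocity) and then express it via the constituents of $\chi\chi^*$ and $\chi'(\chi')^*$. Your symmetric expansion of both products is a minor cosmetic variant of the paper's expansion of $\chi'(\chi')^*$ alone followed by $m(1,\chi^*\mu\chi)=m(\mu,\chi\chi^*)$.
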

Here $m(\lambda, \chi\chi^*)$, $m(\lambda, \chi'(\chi')^*)$,
denote the multiplicity of $\lambda$ in $\chi\chi^*$ and in
$\chi'(\chi')^*$, respectively.

\begin{proof} Let $\zeta = \chi^* \chi'$. Then $\zeta$
is irreducible if and only if $m(1, \zeta \zeta^*) = 1$. On the
other hand, $$\zeta \zeta^*  = \chi^* \chi' (\chi')^* \chi
= \chi^* \chi +  \sum_{\mu \neq 1} m(\mu,
\chi' (\chi')^*) \chi^* \mu \chi. $$ Therefore, $m (1, \zeta
\zeta^*) = 1$ if and only if  for all $\mu \neq 1$, with $m(\mu,
\chi' (\chi')^*) > 0$, we have $m(1, \chi^* \mu \chi) = 0$ or
equivalently, $m(\mu, \chi \chi^*) = 0$. \end{proof}

\medbreak Let $H$ be a semisimple Hopf algebra such that $\deg
\chi \leq 2$, for all irreducible character $\chi \in H$. Let
$B\subseteq H$ denote the adjoint Hopf subalgebra $B : =
H_{\coad}$. Hence $B = k[B[\chi]:\, \deg\chi = 2]$ is generated as
an algebra by the Hopf subalgebras $B[\chi]$.

By Theorem \ref{nr-ref} $B[\chi]$ is commutative and, in view of
the coalgebra structure of $H$, isomorphic to $k^{\Z_2\times
\Z_2}$ or $k^{D_n}$, $n \geq 3$, for all irreducible character
$\chi$ of degree $2$. In addition, $G[\chi] \neq 1$, for all such
$\chi$.

\begin{lemma}\label{interseccion}  Suppose  $\chi$ is
an irreducible character of $H$ of degree $2$, such that
$|G[\chi]| = 2$. Then $G[\chi] \subseteq \bigcap_{\deg \chi' =
2}B[\chi'] \subseteq Z(B)$.\end{lemma}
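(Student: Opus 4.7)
The plan is to prove the two inclusions separately, the second being essentially formal and the first requiring the structural input from Theorem \ref{nr-ref} together with Lemma \ref{prod-irred}.

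Write $G[\chi] = \{1,a\}$ with $a^{2}=1$. Since $|G[\chi]|=2$ and no irreducible character of $H$ has degree greater than $2$, a degree count forces
\[\chi\chi^{*} = 1 + a + \lambda,\]
with $\lambda$ an irreducible character of degree $2$ (the only group-likes in $\chi\chi^{*}$ are $1$ and $a$, each with multiplicity $1$, and a single irreducible of degree $2$ exhausts the remaining degree). Applying $*$ and using $a^{*}=a$ gives $\lambda^{*}=\lambda$. By Theorem \ref{nr-ref}, $B[\chi] \cong k^{D_{n}}$, so $\lambda$ is one of the $2$-dimensional characters $\chi_{i}$ of $D_{n}$, and the dihedral fusion rules \eqref{nodd}--\eqref{neven} yield $\lambda\lambda^{*} = 1 + a + \chi_{2i}$. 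In particular, $ka$ is a direct summand of the subcoalgebra $C_{\lambda}\mathcal{S}(C_{\lambda})$.

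Next I fix an irreducible character $\chi'$ of degree $2$ with associated simple subcoalgebra $C'$ and show $a \in B[\chi']$. If $a\chi' = \chi'$, then $a \in G[\chi']$, hence $ka \subseteq C'\mathcal{S}(C') \subseteq B[\chi']$. Otherwise $a\chi' \neq \chi'$, giving $m(a,\chi'(\chi')^{*}) = 0$. The product $\chi^{*}\chi'$ has degree $4$, so cannot be irreducible under the standing hypothesis; by Lemma \ref{prod-irred} there exists an irreducible $\mu \neq 1$ with positive multiplicity in both $\chi\chi^{*}$ and $\chi'(\chi')^{*}$. The only nontrivial summands of $\chi\chi^{*}$ are $a$ and $\lambda$, and $\mu \neq a$, so $\mu=\lambda$. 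Hence $C_{\lambda} \subseteq C'\mathcal{S}(C') \subseteq B[\chi']$, and since $B[\chi']$ is a Hopf subalgebra and $\lambda$ is self-dual, $C_{\lambda}\mathcal{S}(C_{\lambda}) \subseteq B[\chi']$. By the fusion rule from the first paragraph this product contains $ka$, so $a \in B[\chi']$ in this case as well.

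The second inclusion is now immediate: each $B[\chi']$ is commutative by Theorem \ref{nr-ref}, so any element in the intersection $\bigcap_{\deg \chi' = 2} B[\chi']$ commutes with every element of every $B[\chi']$, and therefore with every generator of $B = k[B[\chi']:\,\deg\chi'=2]$, placing it in $Z(B)$. I expect the main difficulty to lie in the second case of the analysis above: Lemma \ref{prod-irred} on its own only places the degree-$2$ character $\lambda$ inside $B[\chi']$, and recovering the group-like $a$ requires the explicit dihedral fusion rule for $\lambda\lambda^{*}$ provided by the identification $B[\chi] \cong k^{D_{n}}$.
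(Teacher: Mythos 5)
Your proof is correct and follows the same overall strategy as the paper's: reduce to showing $a \in B[\chi']$ for every degree-$2$ character $\chi'$, note that $\chi^*\chi'$ cannot be irreducible, and use Lemma \ref{prod-irred} to extract a common nontrivial constituent of $\chi\chi^*$ and $\chi'(\chi')^*$, treating the group-like and degree-$2$ cases separately; the formal second inclusion into $Z(B)$ is also handled exactly as in the paper. The one place you genuinely diverge is the step you yourself flag as the main difficulty: passing from $\lambda \in B[\chi']$ to $a \in B[\chi']$. You do this by invoking Theorem \ref{nr-ref} to identify $B[\chi]$ with $k^{D_n}$ and reading off $a$ from the dihedral fusion rule $\chi_i^2 = \chi_0 + \chi_{2i}$ with $\chi_0 = 1+a$. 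The paper reaches the same conclusion with no structure theory at all: since $a\chi = \chi$, one has $a\chi\chi^* = \chi\chi^*$, hence $a\lambda = \lambda$, i.e.\ $a \in G[\lambda]$, so $a$ appears in $\lambda\lambda^*$ and therefore lies in $k[\lambda] \subseteq B[\chi']$. Besides being shorter, this abstract argument also closes a small looseness in your version: it is not automatic that the nontrivial element of $G[\chi]$ is the distinguished degree-one character denoted $a$ in \eqref{nodd}--\eqref{neven} (for $n$ even and $\lambda = \chi_{n/4}$ it could a priori be $b$ or $c$); the identity $a\lambda = \lambda$ is precisely what pins this down, and in that exceptional case $\chi_{n/4}^2 = 1+a+b+c$ contains every group-like anyway, so your conclusion still stands. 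In short: your argument works, but the explicit $k^{D_n}$ fusion rules are heavier machinery than needed for this step.
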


\begin{proof} Since $\bigcap_{\deg \chi' =
2}B[\chi'] \subseteq B[\chi']$, for all $\chi'$ of degree $2$,
then it is a central Hopf subalgebra of $B$, because the
$B[\chi']$'s  are commutative and generate $B$ as an algebra. Thus
it will be enough to show that $G[\chi] \subseteq \bigcap_{\deg
\chi' = 2}B[\chi']$.

Let $\chi' \in H$ be any irreducible character of degree $2$. Then
the product $\chi^*\chi'$ cannot be irreducible, because $H$ has
no irreducible character of degree $4$. In view of Lemma
\ref{prod-irred}, there is an irreducible character $\lambda \neq
1$ that appears with positive multiplicity both in $\chi\chi^*$
and in $\chi'(\chi')^*$. This means, in particular, that the Hopf
subalgebra $k[\lambda]$, generated by the simple subcoalgebra
containing $\lambda$, is contained in $B[\chi]\cap B[\chi']$.

If $\deg(\lambda) = 1$, then $1\neq \lambda \in G[\chi] \cap
G[\chi']$. Since $G[\chi]$ is of order $2$ by assumption, this
implies that $G[\chi] \subseteq G[\chi'] \subseteq B[\chi']$.

If $\deg(\lambda) > 1$, then $\lambda$ is irreducible of degree
$2$. In this case we have decompositions $\chi\chi^* = 1 + g +
\lambda$ and $\chi'(\chi')^* = 1 + g' + \lambda$, where $G[\chi] =
\{1, g\}$ and $G[\chi'] = \{1, g'\}$.

Since $g\chi = \chi$, then also $g\lambda = \lambda$, that is,
$G[\chi] \subseteq G[\lambda]$. Therefore $G[\chi] \subseteq
G[\lambda] \subseteq k[\lambda] \subseteq B[\chi']$.  \end{proof}

\begin{corollary}\label{grado2} Let $H$ be a semisimple Hopf algebra such that $\deg \chi \leq
2$, for all irreducible character $\chi \in H$. Let $B =
H_{\coad}$ be the adjoint Hopf subalgebra. Then one of the
following conditions holds:

\begin{enumerate}\item[(i)] $B$ has a central Hopf subalgebra isomorphic to
$k\Gamma$, where $$\Gamma \simeq \underset{m\text{
times}}{\Z_2\times \dots \times \Z_2},$$ such that
$B/B(k\Gamma)^+$ is cocommutative, where $1\leq m \leq n$, and $n$
is the number of irreducible characters $\chi$ of $H$ such that
$|G[\chi]| = 2$,  or
\item[(ii)] $B$ is cocommutative.\end{enumerate}
Further, if \emph{(ii)} holds and $H$ is not cocommutative, then
$4$ divides $\dim B$ and $G(B)$ is generated by elements of order
$2$. \end{corollary}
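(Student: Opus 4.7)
My plan begins by restricting which values of $|G[\chi]|$ can arise. Theorem \ref{nr-ref} asserts $|G[\chi]|\in\{1,2,4\}$ for every irreducible character $\chi$ of $H$ of degree $2$. The case $|G[\chi]|=1$ would force $B[\chi]\simeq k^{\mathbb A_4}$, $k^{\mathbb S_4}$ or $k^{\mathbb A_5}$, each of which admits an irreducible corepresentation of dimension at least $3$; since $B[\chi]\subseteq H$, this contradicts the hypothesis on the character degrees of $H$. Hence every degree-$2$ irreducible character $\chi$ satisfies $|G[\chi]|\in\{2,4\}$.

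I would then split into two cases. If every degree-$2$ irreducible $\chi$ has $|G[\chi]|=4$, then $B[\chi]\simeq k(\Z_2\times\Z_2)=kG[\chi]$ is a group algebra. Since $B=H_{\coad}$ is generated as an algebra by the $B[\chi]$'s, it is generated by its group-like elements, so $B=kG(B)$ is cocommutative, which is case (ii).

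Otherwise, enumerate by $\chi_1,\dots,\chi_n$ the degree-$2$ irreducible characters with $|G[\chi_i]|=2$ and set $\Gamma=\langle G[\chi_1],\dots,G[\chi_n]\rangle\subseteq G(B)$. Lemma \ref{interseccion} places each $G[\chi_i]$ in $Z(B)$, so $\Gamma$ is central and generated by commuting involutions, whence $\Gamma\simeq(\Z_2)^m$ with $1\leq m\leq n$, providing the central Hopf subalgebra $k\Gamma$ of case (i). The main technical hurdle is the cocommutativity of $B/B(k\Gamma)^+$. For each $i$, the nontrivial element $a_i\in G[\chi_i]$ satisfies $a_i\chi_i=\chi_i$, and from the decomposition $V_{\chi_i}\otimes V_{\chi_i}^*=k\oplus ka_i\oplus Z_i$ one obtains $a_iZ_i\simeq Z_i$; combined with Lemma \ref{interseccion} and the structure $B[\chi_i]\simeq k^{D_{r_i}}$, one identifies $B[\chi_i]/B[\chi_i](kG[\chi_i])^+$ with $k^{\Z_{r_i}}$, which is cocommutative. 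Propagating through the algebra generators of $B$ via the principle invoked in the proof of Lemma \ref{dn}---that a central group-like stabilising each simple subcoalgebra generating $H_{\coad}$ yields a cocommutative quotient, cf.\ \cite[Rem.~3.2.7, Cor.~3.3.2]{ssld}---then gives the result.

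For the final assertion, suppose case (ii) holds and $H$ is not cocommutative. Since every irreducible character of $H$ has degree at most~$2$, the non-cocommutativity of $H$ forces the existence of some degree-$2$ irreducible $\chi$; case (ii) then gives $|G[\chi]|=4$. Hence $G[\chi]\simeq\Z_2\times\Z_2$ embeds in $G(B)$, so $4$ divides $|G(B)|=\dim B$, and since $G(B)$ is generated as a group by the various $G[\chi]$'s, each consisting of involutions, $G(B)$ is generated by elements of order~$2$.
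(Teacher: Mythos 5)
Your proposal is correct and follows essentially the same route as the paper: rule out $|G[\chi]|=1$ via the coalgebra type of $B[\chi]$, observe that if every degree-$2$ character has $|G[\chi]|=4$ then $B$ is generated by the group algebras $kG[\chi]$ and hence cocommutative, and otherwise build $\Gamma$ from the order-$2$ stabilizers, use Lemma \ref{interseccion} for centrality, and invoke \cite[Remark 3.2.7 and Corollary 3.3.2]{ssld} for cocommutativity of $B/B(k\Gamma)^+$. The paper only adds the explicit remark that $\Gamma\cap G[\chi]\neq 1$ also for the characters with $|G[\chi]|=4$ before citing \cite{ssld}, a verification your ``propagation'' step leaves implicit but which follows from the same lemma.
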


\begin{proof} We may assume $H$ is not cocommutative.
If $H$ has an irreducible character $\chi$ of degree $2$ such that
$|G[\chi]| = 2$, then, by Lemma \ref{interseccion}, $G[\chi]
\subseteq Z(B)$.  Let $\Gamma$ be the subgroup generated by the
groups $G[\chi]$ such that $|G[\chi]| = 2$. Thus the group algebra
of $\Gamma$  is a Hopf subalgebra contained in the center of $B$.

Further, it follows also from Lemma \ref{interseccion} that
$\Gamma \cap B[\chi] = \Gamma \cap G[\chi] \neq 1$, for all
irreducible character $\chi$ with $|G[\chi]| = 4$. Then,
\cite[Remark 3.2.7 and Corollary 3.3.2]{ssld}, the quotient Hopf
algebra $B/B(k\Gamma)^+$ is cocommutative, and (i) holds in this
case.

\medbreak Otherwise, $|G[\chi]| = 4$, for all irreducible
character $\chi$ of degree $2$. Since $kG[\chi] \subseteq B$, for
all $\chi$, then $4$ divides $\dim B$. On the other hand, by
dimension, $\chi\chi^* = \sum_{g \in G[\chi]}g\in kG(H)$, for all
irreducible character $\chi$ of degree $2$. Then $B \subseteq
kG(H)$ is cocommutative.

Moreover, by Theorem \ref{nr-ref}, we have in this case $G[\chi]
\simeq \Z_2 \times \Z_2$, for all irreducible $\chi$ of degree
$2$. Then (ii) holds, since these subgroups generate $G(B)$.
\end{proof}

\begin{theorem}\label{1m2n} Let $H$ be a nontrivial semisimple Hopf algebra such that $\deg \chi \leq
2$, for all irreducible character $\chi \in H$. Then $H$ is not
simple. More precisely, let $\Gamma \simeq \Z_2\times \dots \times
\Z_2$ be the subgroup of $G(H)$ in Corollary \ref{grado2} (i).
Then one of the following possibilities holds:

\begin{enumerate}\item[(i)] There is a central abelian exact sequence $k \to k\Gamma \to H \to
kF \to k$, where $F$ is a  finite group, or
\item[(ii)] There is a cocentral exact sequence $k \to B \to H \to
kU(\Cc) \to k$, where $B = H_{\coad} \subsetneq H$, and  $U(\Cc)$
is the universal grading group of $\Cc = H-\comod$.
\end{enumerate}
Moreover, in case (ii), either $B$ is commutative or it contains a
central Hopf subalgebra isomorphic to $k\Gamma$.

In particular, either $H$ or $H^*$ contains a nontrivial central
group-like element.
\end{theorem}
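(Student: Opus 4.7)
My plan is to combine Corollary~\ref{grado2}, applied to $B := H_{\coad}$, with the universal cocentral exact sequence \eqref{univcocentral}
\[
 k \to B \to H \to kU(\Cc) \to k,\qquad \Cc = H\text{-}\comod.
\]
By Corollary~\ref{grado2} we have (not mutually exclusively): either (a) $B$ contains a central Hopf subalgebra $k\Gamma$ with $\Gamma \simeq \Z_2^m$ and $B/B(k\Gamma)^+$ cocommutative, or (b) $B$ is cocommutative.

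I would then split according to whether $B \subsetneq H$ or $B = H$. If $B \subsetneq H$, the universal grading group $U(\Cc)$ is nontrivial and the displayed sequence is a nontrivial cocentral extension, which is precisely case~(ii) of the theorem; the asserted information on $B$ (cocommutative, or containing a central $k\Gamma$) is supplied directly by Corollary~\ref{grado2}.

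If $B = H$, I rule out alternative (b) of the corollary as follows: if $B = H$ were cocommutative, then $H$ itself would be cocommutative, whence every simple subcoalgebra $C$ of $H$ would be one-dimensional and $C\mathcal S(C) = k$. Therefore $(H\text{-}\comod)_{\ad}$ reduces to the trivial subcategory, so $H_{\coad} = k$; combined with $B = H_{\coad} = H$ this forces $H = k$, contrary to nontriviality. Hence Corollary~\ref{grado2}(a) applies, $k\Gamma$ is central in $H = B$, and $H/H(k\Gamma)^+ = B/B(k\Gamma)^+$ is cocommutative, hence of the form $kF$ for a finite group $F$. This yields the central abelian extension of case~(i).

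For the final assertion, case~(i) provides the nontrivial central group-like elements of $H$ via $\Gamma \subseteq Z(G(H))$. In case~(ii), dualizing the nontrivial cocentral surjection $H \to kU(\Cc)$ yields a central Hopf subalgebra inclusion $k^{U(\Cc)} \hookrightarrow H^*$; since $U(\Cc)$ is a nontrivial abelian group, its character group $\Hom(U(\Cc),k^\times)$ is nontrivial and sits inside $G(H^*) \cap Z(H^*)$, supplying central group-likes in $H^*$. The main delicate point is the dichotomy $B = H$ versus $B \subsetneq H$: it resolves cleanly once one notes that $B = H$ together with cocommutativity of $B$ collapses $H$ to $k$ via the description of $H_{\coad}$ through the adjoint subcategory.
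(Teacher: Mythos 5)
Your proof follows the same route as the paper's: the dichotomy $B=H_{\coad}\subsetneq H$ versus $B=H$, with case (ii) supplied by the universal cocentral sequence \eqref{univcocentral} and case (i) by Corollary \ref{grado2}. The paper's own proof is only these two sentences; your additional argument that $B=H$ cocommutative would force $H_{\coad}=k$ and hence $H=k$ is correct and usefully makes explicit why alternative (ii) of the corollary cannot occur there (the paper leaves this to the reader via the word ``nontrivial''). Note also that you correctly report ``cocommutative'' from Corollary \ref{grado2} where the theorem's statement says ``commutative''; that discrepancy is in the paper, not in your argument.

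The one genuine gap is in your treatment of the final clause in case (ii). You assert that $U(\Cc)$ is abelian, so that $\Hom(U(\Cc),k^{\times})\neq 1$ gives nontrivial central group-likes of $H^*$. But the universal grading group of a comodule category need not be abelian: for $H=kG$ with $G$ nonabelian one has $\Cc=\vect^{G}$ and $U(\Cc)=G$, and if $G$ is perfect then $\widehat{U(\Cc)}=1$. That example is excluded here because such an $H$ is trivial, but you have not shown that the hypotheses (character degrees $\leq 2$, $H$ nontrivial, $B\subsetneq H$) force $U(\Cc)$ to be abelian, or even merely non-perfect, which is what your argument actually needs. What the cocentral sequence gives for free is only that $k^{U(\Cc)}$ is a nontrivial \emph{central Hopf subalgebra} of $H^*$; extracting a nontrivial central \emph{group-like} requires $U(\Cc)^{\mathrm{ab}}\neq 1$, and some justification of this (e.g.\ via the structure of $B$ from the ``moreover'' clause, or an analysis of the degrees $g_V$ of the $2$-dimensional simples) is missing. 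The paper's proof is silent on this clause as well, so this is a gap relative to full rigor rather than a departure from the paper's method, but as written the step does not stand on its own.
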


\begin{proof} Suppose first that $H_{\coad} = B \subsetneq H$. Then
(ii) holds in view of the properties of the adjoint Hopf
subalgebra $B$. See Subsection \ref{adjoint}.

If, on the other hand, $B = H$, then by Corollary \ref{grado2} we
have (i). \end{proof}

Let $H$ be a semisimple Hopf algebra. Recall from \cite{MoW} that
$H$ is called \emph{lower semisolvable} if there exists a chain of
Hopf subalgebras $H_{n+1} = k \subseteq H_n \subseteq \dots
\subseteq H_1 = H$ such that $H_{i+1}$ is a normal Hopf subalgebra
of $H_i$, for all $i$, and all \emph{factors} ${\overline H}_i : =
H_{i+1}/H_{i+1}H_i^+$ are commutative or cocommutative.

Dually, $H$ is called \emph{upper semisolvable} if there exists a
chain of quotient Hopf algebras $H_{(0)} = H \to H_{(1)} \to \dots
\to H_{(n)}= k$ such that each of the maps $H_{(i-1)} \to H_{(i)}$
is normal, and all \emph{factors} $H_i : = H_{(i-1)}^{{\rm co }
\pi_i}$ are commutative or cocommutative.

We have that $H$ is upper semisolvable if and only if $H^*$ is
lower semisolvable \cite{MoW}.

\begin{remark}\label{ss-ext} Suppose that $H$ fits into an exact sequence $k \to K \to H
\to \overline H \to k$. If $K$ is lower semisolvable and
$\overline H$ is commutative or cocommutative, then $H$ is lower
semisolvable. On the other hand, if $K$ is commutative or
cocommutative and $\overline H$ is upper semisolvable, then $H$ is
upper semisolvable. \end{remark}

\begin{corollary}\label{semisoluble} Let $H$ be a semisimple Hopf algebra such that $\deg \chi \leq
2$, for all irreducible character $\chi \in H$. Then $H$ is lower
semisolvable. \end{corollary}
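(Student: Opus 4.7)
The plan is to apply Theorem \ref{1m2n} to split into two exact-sequence cases, and then in each case invoke Remark \ref{ss-ext} to show that $H$ is built by extensions with commutative or cocommutative factors. If $H$ is itself commutative or cocommutative there is nothing to prove, so I may assume $H$ is nontrivial in the sense of Theorem \ref{1m2n}.

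In case (i) of Theorem \ref{1m2n}, $H$ sits in a central abelian exact sequence $k \to k\Gamma \to H \to kF \to k$ with $\Gamma$ an elementary abelian $2$-group and $F$ a finite group. The kernel $k\Gamma$ is commutative, hence trivially lower semisolvable, while the quotient $kF$ is cocommutative. Remark \ref{ss-ext} then yields that $H$ is lower semisolvable in a single step.

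In case (ii) of Theorem \ref{1m2n}, $H$ sits in a cocentral exact sequence $k \to B \to H \to kU(\Cc) \to k$ with $B = H_{\coad} \subsetneq H$. Since $kU(\Cc)$ is cocommutative, Remark \ref{ss-ext} reduces the problem to showing that $B$ is lower semisolvable. The ``moreover'' clause of Theorem \ref{1m2n} ensures that either $B$ is commutative (and we are done), or $B$ contains a central Hopf subalgebra isomorphic to $k\Gamma$. In the latter case, the proof of Corollary \ref{grado2} actually shows that the quotient $B/B(k\Gamma)^+$ is cocommutative, so $B$ itself fits into an extension with commutative kernel and cocommutative quotient. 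A further application of Remark \ref{ss-ext} shows that $B$ is lower semisolvable, and hence so is $H$.

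The real work has already been carried out in Theorem \ref{1m2n} and Corollary \ref{grado2}; what remains here is a purely formal assembly, with $H$ being at worst a two-step extension whose successive factors are commutative or cocommutative. In particular, no induction on $\dim H$ is required. The only subtle point, and hence the ``main obstacle'' if any, is to remember to invoke the precise structural conclusion of Corollary \ref{grado2}(i), namely the cocommutativity of $B/B(k\Gamma)^+$, which is used implicitly in case (ii) but is not repeated verbatim in the statement of Theorem \ref{1m2n}.
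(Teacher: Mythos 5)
Your proof is correct, but it follows a genuinely different route from the paper's. The paper's own proof is inductive: in case (ii) of Theorem \ref{1m2n} it observes that $B = H_{\coad}$ is a \emph{proper} Hopf subalgebra whose irreducible comodules again have degree at most $2$, assumes inductively that $B$ is lower semisolvable, and concludes via the cocentral sequence $k \to B \to H \to kU(\Cc) \to k$. You instead avoid induction altogether by going back to Corollary \ref{grado2} applied to $H$: its dichotomy says that $B$ is either cocommutative or contains a central $k\Gamma$ with $B/B(k\Gamma)^+$ cocommutative, so $B$ is lower semisolvable in at most two explicit steps and $H$ in at most three. This buys an explicit bound on the length of the normal series $k \subseteq k\Gamma \subseteq B \subseteq H$, at the cost of re-invoking the structural content of Corollary \ref{grado2} rather than just the statement of Theorem \ref{1m2n}. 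Two small remarks: the cocommutativity of $B/B(k\Gamma)^+$ is already part of the \emph{statement} of Corollary \ref{grado2}(i), not only its proof, so you need not appeal to the proof; and the ``moreover'' clause of Theorem \ref{1m2n} says ``commutative'' where its source, Corollary \ref{grado2}, yields ``cocommutative'' --- this discrepancy is harmless here since either property is acceptable for a factor in a lower semisolvable series, but you should cite Corollary \ref{grado2} directly rather than the ``moreover'' clause to make the dichotomy you actually use airtight.
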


\begin{proof} We may assume $H$ is not trivial. Then one of the
possibilities (i) or (ii) in Theorem \ref{1m2n} holds. If (i)
holds, then $H$ is both upper and lower semisolvable.

Suppose that (ii) holds. Since in this case $B$ is a proper Hopf
subalgebra, so in particular its irreducible comodules have degree
at most $2$, we may assume inductively that $B$ is lower
semisolvable. Then $H$ is lower semisolvable as well. The
corollary is thus proven.
\end{proof}

\begin{remark} It follows from Corollary \ref{semisoluble} and \cite{MoW} that a semisimple Hopf algebra $H$
with character degrees of irreducible comodules at most $2$ have
the Frobenius property: that is, the dimensions of the irreducible
$H$-modules divide the dimension of $H$. \end{remark}

As a consequence of the above results, we have the following
proposition. This is proved by Izumi and Kosaki in the context of
Kac algebras \cite[Corollary IX.9]{ik}, and under an additional
restriction in \cite[Theorem 4.6.5]{ssld}.

\begin{proposition} Suppose $H$ is of type $(1, 2; 2, n)$ as a
coalgebra. Then $H$ is commutative. \end{proposition}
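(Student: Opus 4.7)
The plan is to apply Theorem \ref{1m2n} and argue by induction on $n$. Set $G(H)=\{1,g\}$. Since the coalgebra type of $H$ contains no factor of dimension $9$ or more, every $2$-dimensional irreducible character $\chi$ of $H$ must have $|G[\chi]|=2$: otherwise Theorem \ref{nr-ref} would force $\chi\chi^*=1+\lambda$ with $\deg\lambda=3$, impossible. Corollary \ref{grado2}(i) and Theorem \ref{1m2n} then give two cases: either (i) $B:=H_{\coad}=H$ and there is a central abelian exact sequence $k\to k\Z_2\to H\to kF\to k$ with $|F|=1+2n$ odd, or (ii) $B=H_{\coad}\subsetneq H$ and there is a cocentral abelian exact sequence $k\to B\to H\to kU\to k$. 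Writing $\chi\chi^*=1+g+\lambda_\chi$ for any $2$-dimensional $\chi$, we have $g\in B$ and the $4$-dimensional simple subcoalgebra containing $\lambda_\chi$ lies in $B$, so $B$ has coalgebra type $(1,2;2,n_0)$ with $n_0\geq 1$, and $n_0<n$ holds whenever case (ii) occurs.

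In case (ii), the inductive hypothesis applies to $B$, so $B\simeq k^\Lambda$ is commutative. The extension is then abelian, and Remark \ref{coalg-abel} gives a decomposition of coalgebras $H=\bigoplus_{u\in U}R_u$ with $R_u\simeq(k_{\tau_u}\Lambda)^*$ and $\dim R_u=|\Lambda|=2+4n_0$. For $u\neq e$, the subcoalgebra $R_u$ contains no grouplike of $H$ (these all lie in $B=R_e$), so by the coalgebra-type assumption each of its simple components has dimension $4$; this forces $\dim R_u\equiv 0\pmod 4$, contradicting $\dim R_u\equiv 2\pmod 4$. Thus case (ii) cannot occur.

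In case (i), realize $H$ as a bicrossed product $k^{\Z_2}\#^{\tau}kF$ from the matched pair: centrality of $k^{\Z_2}$ trivializes $\vartriangleleft$, the action $\vartriangleright$ is an involutive automorphism $\theta$ of $F$, and by Remark \ref{sigma} the cocycle $\sigma$ can be taken to be trivial. As an algebra $H\simeq kF\oplus kF$, so $H$ is commutative if and only if $F$ is abelian. A direct calculation with \eqref{comult} shows that each $\theta$-orbit on $F$ spans a subcoalgebra of $H$: a fixed point of $\theta$ produces a $2$-dimensional piece that splits as the sum of two grouplikes, and a $\theta$-orbit of size $2$ produces a simple subcoalgebra of dimension $4$. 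Matching this with the hypothesis $(1,2;2,n)$ forces $|F^\theta|=1$, i.e.\ $\theta$ acts without non-trivial fixed points.

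The final step is the classical observation that a fixed-point-free involutive automorphism $\theta$ of a finite group $F$ must be inversion: the map $x\mapsto x^{-1}\theta(x)$ has trivial kernel and is thus bijective, and a short calculation gives $\theta(y)=y^{-1}$ for all $y\in F$; inversion being a group automorphism forces $F$ to be abelian. Hence $F$ is abelian, $H$ is commutative, and the induction is complete. The principal technical hurdle is the parity obstruction ruling out case (ii), which crucially uses the absence of any simple subcoalgebra of odd dimension greater than $1$ in $H$ and its Hopf subalgebras.
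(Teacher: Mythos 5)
Your reduction to the two cases of Theorem \ref{1m2n} is sound, and your exclusion of case (ii) by the parity of $\dim R_u$ is correct and genuinely different from the paper, which keeps that case and merely extracts $G(H)\subseteq Z(H)$ from it. The problem is in case (i), at the step ``by Remark \ref{sigma} the cocycle $\sigma$ can be taken to be trivial,'' hence ``as an algebra $H\simeq kF\oplus kF$.'' Remark \ref{sigma} concerns extensions of the form $k\to k^\Gamma\to H\to k\Z_2\to k$ and trivializes the multiplication cocycle, which in that situation is a $2$-cocycle on the \emph{cyclic} group $\Z_2$. In your extension $k\to k^{\Z_2}\to H\to kF\to k$ the roles are reversed: the cocycle living on the cyclic group is $\tau$ (which governs the coalgebra structure), while the multiplication cocycle $\sigma$ is a $2$-cocycle on $F$ with values in $(k^{\Z_2})^{\times}$, and $F$ need not be cyclic. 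So the cited remark trivializes the wrong cocycle. Without it, what you actually know is that $H\simeq kF\oplus k_{\sigma_1}F$ as an algebra for some $\sigma_1\in Z^2(F,k^{\times})$, and ``$F$ abelian $\Rightarrow H$ commutative'' fails for twisted group algebras in general (e.g.\ $k_{\sigma}(\Z_3\times\Z_3)\simeq M_3(k)$ for $\sigma$ nondegenerate). As written, the last line of your argument does not follow.

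The gap is fillable, but it needs an argument you have not supplied. For instance, the compatibility condition making $\Delta$ multiplicative on $k^{\Z_2}\,{}^{\tau}\#_{\sigma}kF$ gives, for the nontrivial $s\in\Z_2$ and all $x,y\in F$, a relation of the form $\tau_{xy}(s,s)=\tau_x(s,s)\tau_y(s,s)\,\sigma_1(\theta(x),\theta(y))\,\sigma_1(x,y)$; once $F$ is abelian and $\theta$ is inversion, interchanging $x$ and $y$ forces the alternating bicharacter $\sigma_1(x,y)\sigma_1(y,x)^{-1}$ to be $2$-torsion, hence trivial because $|F|=1+2n$ is odd, so $k_{\sigma_1}F$ is commutative. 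Alternatively, your case (i) already yields $G(H)\subseteq Z(H)$, and one can then quote \cite[Corollary 4.6.8]{ssld} as the paper does. Everything before this point --- the verification that $|G[\chi]|=2$ for every degree-$2$ character, the contradiction $\dim R_u\equiv 2\pmod 4$ versus $\dim R_u\equiv 0\pmod 4$ ruling out case (ii) (note that Remark \ref{coalg-abel} does apply there because cocentrality forces $\vartriangleright$ to be trivial), the computation $|F^{\theta}|=1$ from the coalgebra type, and the classical fixed-point-free involution argument --- is correct.
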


\begin{proof} If $H \neq H_{\coad}$, then $H$ fits into
an exact sequence \begin{equation}\label{udec}k \to B \to H \to k
U(\Cc) \to k,\end{equation} where $B = H_{\coad}$. We may assume
inductively that $B$ is commutative, thus the extension
\eqref{udec} is abelian. Hence \eqref{udec} induces an action of
$U(\Cc)$ on $B$ by Hopf algebra automorphisms. Moreover, since
$G(B) = G(H)$ is of order $2$, then $U(\Cc)$ acts trivially on
$G(B)$. Then $g \in Z(H)$, because $H$ is a crossed product with
respect to this action.

If $H = H_{\coad}$, then by Theorem \ref{1m2n}, we have a central
abelian exact sequence $k \to kG(H) \to H \to kF \to k$, where $F$
is a finite group. In any case we get that $G(H) \subseteq Z(H)$.
Then the proposition follows from \cite[Corollary 4.6.8]{ssld}.
\end{proof}

\subsection{Coalgebra type of a simple Hopf algebra of dimension
$60$}\label{dim60}

There are three known examples of  nontrivial semisimple Hopf
algebras which are simple as Hopf algebras in dimension $60$. Two
of them are given by the Hopf algebras $\A_0$ and $\A_1 \simeq
\A_0^*$ constructed by Nikshych \cite{nik}. We have $\A_0 =
(k\mathbb A_5)^J$, where $J \in k\mathbb A_5 \otimes k\mathbb A_5$
is an invertible twist lifted from a subgroup isomorphic to $\Z_2
\otimes \Z_2$.

A third example is given by the self-dual Hopf algebra $\B$
constructed in \cite{gn}. In this case $\B = (kD_3\otimes
kD_5)^J$, where $J$ is an invertible twist lifted from a subgroup
isomorphic to $\Z_2 \times \Z_2$.

In this subsection we show that $\A_1$ and $\B$ are the only
nontrivial simple Hopf algebras of dimension $60$ with their
respective coalgebra type. See Proposition \ref{n=1} and Corollary
\ref{b}.

\begin{remark} The coalgebra types of these examples are the following:
\begin{equation}\A_1: (1, 1; 3, 2; 4, 1; 5, 1), \quad \A_0: (1, 12;
4, 3), \quad \B: (1, 4; 2, 6; 4, 2), \end{equation} and we have
$G(\A_0) \simeq \mathbb A_4$, $G(\B) \simeq \Z_2 \times \Z_2$.

\begin{proof} The coalgebra type of $\A_1$ is that of $k^{\mathbb
A_5}$. That of $\B$ was computed in \cite[4.2]{gn}. The proof of
the statement for $\A_0 = \A_1^*$ is given during the proof of
Lemma 6.8 in \cite{bb}, using the classification of irreducible
representations in twisting of group algebras \cite{eg}.
\end{proof} \end{remark}

\medbreak In what follows $H$ will be a semisimple Hopf algebra of
dimension $60$.

\begin{proposition}\label{n=1} Suppose $|G(H)| = 1$.  Then $H \simeq k^{\mathbb A_5}$ or $H \simeq \A_1$.
\end{proposition}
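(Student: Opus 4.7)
The plan is to first restrict the possible coalgebra types of $H$, then eliminate the non-trivial candidates via Theorem \ref{nr-ref}, and finally identify the surviving case. Since $|G(H)| = 1$ and $\dim H = 60$, the coalgebra decomposition forces $\sum_i n_i d_i^2 = 59$ with each $d_i \geq 2$. By \cite{ZS} (as recalled in the Preliminaries), the set $\{d_i : d_i > 1\}$ must contain at least three distinct values. A direct enumeration over $d_i \in \{2,\dots,7\}$ (since $d_i^2 \leq 59$) then leaves exactly three candidate coalgebra types: $(1,1;3,2;4,1;5,1)$, $(1,1;2,4;3,2;5,1)$, and $(1,1;2,4;3,3;4,1)$.

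I would next rule out the two candidates containing a two-dimensional irreducible comodule. In each such case, let $\chi$ be such a character, lying in a simple subcoalgebra $C$ of dimension $4$. Since $G[\chi] \subseteq G(H) = 1$, Theorem \ref{nr-ref}(iii) yields that $B[\chi] = k[C\mathcal{S}(C)]$ is isomorphic to $k^{\mathbb A_4}$, $k^{\mathbb S_4}$, or $k^{\mathbb A_5}$. Nichols--Zoeller rules out $k^{\mathbb S_4}$, of dimension $24$, which does not divide $60$; and $B[\chi] \simeq k^{\mathbb A_5}$ would force $H = B[\chi] \simeq k^{\mathbb A_5}$ by dimension, contradicting the presence of a two-dimensional comodule in $H$ (as $k^{\mathbb A_5}$ has none). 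Hence $B[\chi] \simeq k^{\mathbb A_4}$ is a Hopf subalgebra of $H$; but then the three group-like elements of $k^{\mathbb A_4}$, arising from the characters of $\mathbb A_4^{\mathrm{ab}} \simeq \Z_3$, embed as three distinct group-likes of $H$, contradicting $|G(H)| = 1$.

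Only the coalgebra type $(1,1;3,2;4,1;5,1)$ survives, matching the irreducible representations of $\mathbb A_5$. Dually, $H^*$ has algebra decomposition $k \oplus M_3(k)^{\oplus 2} \oplus M_4(k) \oplus M_5(k)$, which is isomorphic as an algebra to the group algebra $k\mathbb A_5$. The identification $H \simeq k^{\mathbb A_5}$ or $\A_1$ is then obtained by invoking the classification of semisimple Hopf algebra structures on the algebra $k\mathbb A_5$ carried out in \cite{bb} via the twist theory of \cite{eg}: such structures are cocycle twists of $k\mathbb A_5$, and up to equivalence only the trivial twist and the one associated with the subgroup $\Z_2\times\Z_2 \subseteq \mathbb A_5$ (equipped with its nontrivial nondegenerate $2$-cocycle) arise, yielding $k\mathbb A_5$ and Nikshych's $\A_0$. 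Dualizing gives $H \simeq k^{\mathbb A_5}$ or $\A_1$.

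The main obstacle is precisely this last step. Theorem \ref{nr-ref} and the combinatorial enumeration do most of the work in reducing to a single coalgebra type, but the final identification relies on the external classification of Hopf algebra structures on the algebra $k\mathbb A_5$, which is not internal to the paper.
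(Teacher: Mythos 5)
Your combinatorial reduction is correct and self-contained: the enumeration of coalgebra types summing to $59$ with at least three distinct degrees (via \cite{ZS}) does yield exactly the three candidates you list, and your elimination of the two types containing a degree-$2$ character via Theorem \ref{nr-ref}(iii) is sound ($k^{\mathbb S_4}$ fails Nichols--Zoeller, $k^{\mathbb A_5}$ would force $H$ to be commutative and hence without degree-$2$ comodules, and $k^{\mathbb A_4}$ carries three group-like elements). This route is genuinely different from, and more laborious than, the paper's argument, which simply observes that by \cite{ssld} every semisimple Hopf algebra of dimension $<60$ has a nontrivial group-like element, so that $H$ contains no proper Hopf subalgebra at all, and then invokes \cite[Corollary 9.14]{ENO2}.

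The gap is in your last step. Knowing that $H^*$ is isomorphic \emph{as an algebra} to $k\mathbb A_5$ does not imply that $H^*$ is a twist of $k\mathbb A_5$: twisting preserves the algebra structure, but the converse --- that every semisimple Hopf algebra structure on the algebra $k\oplus M_3(k)^{\oplus 2}\oplus M_4(k)\oplus M_5(k)$ arises by deforming the comultiplication of $k\mathbb A_5$ by a twist --- is precisely the hard content, and it is not proved in either of the references you cite. The paper uses \cite{eg} only for the representation theory of twisted group algebras, and \cite{bb} only applies that to compute the coalgebra type of $\A_0$; neither classifies Hopf algebra structures on a fixed algebra. The statement you need is exactly \cite[Corollary 9.14]{ENO2} (a consequence of the theory of weakly group-theoretical fusion categories), and to apply it one should also record that $H$ has no proper Hopf subalgebra --- which follows either from your coalgebra type together with $|G(H)|=1$ and Nichols--Zoeller, or, as in the paper, from the dimension-$<60$ result of \cite{ssld}. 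Without that external input your argument pins down the coalgebra type of $H$ but does not identify $H$.
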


\begin{proof} By \cite{ssld} every semisimple Hopf algebra of dimension $<
60$ has a nontrivial group-like element. Therefore, if $|G(H)| =
1$,  then $H$ can contain no proper Hopf subalgebra. The
proposition follows from \cite[Corollary 9.14]{ENO2}. \end{proof}

\medbreak Suppose $G(H) \neq 1$ and $H$ has an irreducible
character $\chi$ of degree $2$. Let $C \subseteq H$ be the simple
subcoalgebra containing $\chi$, and consider the Hopf subalgebra
$B = : B[\chi] = k[C\mathcal S(C)] \subseteq H_{\coad}$.

Then we have $B \simeq k^{\mathbb A_4}$, $k^{\Z_2 \times \Z_2}$ or
$k^{D_n}$, $n = 2, 3$ or $5$.

\begin{lemma}\label{n=4} Suppose $H$ is simple and $|G(H)| = 4$. Assume in addition that $H$
has irreducible characters $\chi$ and $\chi'$ of degree $2$,  such
that $G[\chi] \neq G[\chi']$. Then $H$ is isomorphic to the
self-dual Hopf algebra $\B = (kD_3\otimes D_5)^J$. \end{lemma}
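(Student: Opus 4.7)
The plan is to proceed by a case analysis on the orders $|G[\chi]|$ and $|G[\chi']|$, which by Theorem \ref{nr-ref} both divide $|G(H)|=4$, combined with the Nichols--Zoeller divisibility constraint and the simplicity hypothesis on $H$.

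First I rule out $|G[\chi]|=1$. By Theorem \ref{nr-ref}(iii), this would force $B[\chi]$ to be isomorphic to $k^{\mathbb A_4}$, $k^{\mathbb S_4}$ or $k^{\mathbb A_5}$, of respective dimensions $12$, $24$ and $60$. Since $\dim B[\chi]$ must divide $60$, dimension $24$ is excluded; dimension $60$ gives $H=B[\chi]\simeq k^{\mathbb A_5}$ with $|G(H)|=1$, contradicting $|G(H)|=4$; and $B[\chi]\simeq k^{\mathbb A_4}$ has $|G(B[\chi])|=3$, which does not divide $|G(H)|=4$. The same reasoning applies to $\chi'$. Hence $|G[\chi]|,|G[\chi']|\in\{2,4\}$, and since $|G[\chi]|=4$ forces $G[\chi]=G(H)$ (and likewise for $\chi'$), the hypothesis $G[\chi]\ne G[\chi']$ excludes the case that both orders equal $4$.

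The principal subcase is $|G[\chi]|=|G[\chi']|=2$: two distinct order-$2$ subgroups generate $G(H)$, which must therefore be non-cyclic, so $G(H)\simeq\Z_2\times\Z_2$ and $\langle G[\chi],G[\chi']\rangle=G(H)$. By Theorem \ref{nr-ref}(ii), $B[\chi]\simeq k^{D_n}$ and $B[\chi']\simeq k^{D_m}$ for some $n,m\ge 3$, and Nichols--Zoeller forces $n,m\mid 30$. The complementary subcase $|G[\chi]|=4$, $|G[\chi']|=2$ (up to swapping) gives $B[\chi]\simeq kG(H)$ with $G(H)\simeq\Z_2\times\Z_2$ and a single dihedral subalgebra $B[\chi']\simeq k^{D_m}$; it is treated in parallel with the principal subcase.

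By Lemma \ref{h-ad}, simplicity yields $H=H_{\coad}$, so the Hopf subalgebras $B[\chi]$ and $B[\chi']$, together with the remaining $B[\zeta]$ for other degree-$2$ characters, generate $H$ as an algebra. A dimension and fusion-rule analysis in the spirit of the proof of Lemma \ref{dn}, exploiting the decompositions $\chi\chi^*=1+g+\lambda$ and $\chi'(\chi')^*=1+g'+\lambda'$ together with the centrality constraints forced by $G(H)\simeq\Z_2\times\Z_2$, should pin down $\{n,m\}=\{3,5\}$ and determine the coalgebra type of $H$ to be $(1,4;\,2,6;\,4,2)$, matching that of $\B$. The main obstacle is the final identification step: showing that \emph{every} simple Hopf algebra of dimension $60$ with this coalgebra type and $|G(H)|=4$ is isomorphic to the specific twist $(kD_3\otimes kD_5)^J$ of \cite{gn}. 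I would tackle this by either explicitly comparing the cocycle data arising from the extension-theoretic description (in the spirit of Theorem \ref{explicito} and Lemma \ref{aktion}, with the roles of $\Gamma$ and $\Z_2$ suitably adapted to the product $D_3\times D_5$), or by invoking a rigidity/uniqueness result for simple Hopf algebras of dimension $60$ with the prescribed fusion structure.
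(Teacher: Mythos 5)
Your reduction to the case where $G[\chi]$ and $G[\chi']$ are distinct subgroups of order $2$ (so that $G(H)\simeq \Z_2\times\Z_2$ and $B[\chi]$, $B[\chi']$ are distinct dihedral function algebras) is sound and consistent with the paper. The problem is that everything after that point is deferred: you say a fusion-rule analysis ``should pin down'' $\{n,m\}=\{3,5\}$, and you explicitly flag the identification of $H$ with $(kD_3\otimes kD_5)^J$ as ``the main obstacle,'' offering two candidate strategies without executing either. That identification is the entire content of the lemma, so as it stands the proposal has a genuine gap rather than a complete proof. Moreover, the first strategy you propose --- comparing cocycle data from an extension-theoretic description in the spirit of Theorem \ref{explicito} and Lemma \ref{aktion} --- cannot work as stated: those results describe Hopf algebras sitting in nontrivial abelian (co)central exact sequences, and a \emph{simple} $H$ admits no such sequence, so there is no extension data to compare. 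Also, invoking Lemma \ref{h-ad} to get $H=H_{\coad}$ only tells you that $H$ is generated by \emph{all} the subalgebras $B[\zeta]$, not by $B[\chi]$ and $B[\chi']$ alone.

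The paper closes the gap by a different and quite specific mechanism. It introduces the larger Hopf subalgebras $K[\chi]=k[G(H),B[\chi]]$ and $K[\chi']=k[G(H),B[\chi']]$; by Nichols--Zoeller their dimensions are $12$ or $20$, and a dimension count shows that together they generate $H$ as an algebra. If either were commutative, then $G[\chi']$ (respectively $G[\chi]$), being central in each of the two generating subalgebras, would be central in $H$, contradicting simplicity; so both are noncommutative. By \cite[5.2]{ssld} there is then a single invertible twist $J\in kG(H)\otimes kG(H)$ --- this is where the non-cyclic group $G(H)\simeq\Z_2\times\Z_2$ enters --- turning both $K[\chi]$ and $K[\chi']$ into group algebras. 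Since twisting does not change the algebra structure, $K[\chi]^J$ and $K[\chi']^J$ still generate $H^J$, so $H^J$ is cocommutative, and \cite[Theorem 4.10]{gn} identifies $H$ with $\B$. Note that the paper never needs to determine $\{n,m\}$ or the full coalgebra type of $H$ inside this proof; the simultaneous untwisting bypasses that computation entirely, and it is this step that your proposal is missing.
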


\begin{proof} Since $G[\chi], G[\chi'] \neq 1$, then $G[\chi]$ and
$G[\chi']$ are distinct subgroups  of order $2$. Then $B[\chi]$
and $B[\chi']$ are distinct Hopf subalgebras of $H$ isomorphic to
$k^{D_3}$ or $k^{D_5}$.

Let $K[\chi] = k[G(H), B[\chi]]$ and $K[\chi'] = k[G(H),
B[\chi']]$. Then $K[\chi]$, $K[\chi']$ are Hopf subalgebras of
$H$, whose irreducible comodules are of dimension $1$ and $2$.
Hence $\dim K[\chi]$, $\dim K[\chi']$ equal $12$ or $20$.
Moreover, by dimension, $K[\chi]$ and $K[\chi']$ generate $H$ as
an algebra. In addition, $G(K[\chi]) = G(K[\chi']) = G(H)$ is not
cyclic.

If one of them, say $K[\chi]$, is commutative, then $G[\chi']$
which is central in $K[\chi']$, would be central in $H$, whence
$H$ would not be simple. We may assume then that $K[\chi]$ and
$K[\chi']$ are not commutative. Then there exists a twist $J\in
kG(H) \otimes kG(H)$ such that $K[\chi]^J$ and  $K[\chi']^J$ are
group algebras \cite[5.2]{ssld}. In particular, since $K[\chi]^J$
and  $K[\chi']^J$ generate $H^J$ as an algebra (because the
algebra structure is unchanged under twisting), we find that $H^J$
is cocommutative. The lemma follows from \cite[Theorem 4.10]{gn}.
\end{proof}

\begin{corollary}\label{b} Suppose $H$ is of type $(1, 4; 2, 6; 4, 2)$ as a coalgebra. If $H$
is simple, then $H$ is isomorphic to the self-dual Hopf algebra
$\B = (kD_3\otimes kD_5)^J$.
\end{corollary}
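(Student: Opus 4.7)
The plan is to apply Lemma \ref{n=4}: since $|G(H)| = 4$, it suffices to exhibit two degree-$2$ irreducible characters $\chi, \chi'$ of $H$ with $G[\chi] \neq G[\chi']$.

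I would first use Theorem \ref{nr-ref} and the coalgebra type to rule out $|G[\chi]| = 1$ for degree-$2$ characters. In that case $B[\chi]$ would be isomorphic to $k^{\mathbb A_4}$, $k^{\mathbb S_4}$, or $k^{\mathbb A_5}$. The first would require a simple subcoalgebra of dimension $9$, absent from the coalgebra type $(1,4;2,6;4,2)$; the second has dimension $24$, which does not divide $60$; and the third would force $H \simeq k^{\mathbb A_5}$, hence $|G(H)| = 1$, contradicting the hypothesis. Therefore $|G[\chi]| \in \{2, 4\}$, and the only obstruction to applying Lemma \ref{n=4} is the scenario in which all six degree-$2$ characters share a common stabilizer $G_1 \subseteq G(H)$ with $|G_1| \in \{2, 4\}$.

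The main step, which I expect to be the principal obstacle, is ruling out this common-stabilizer scenario. Under that assumption I would show that
\[
H_0 := kG(H) \oplus \bigoplus_{\chi \text{ degree } 2} C_\chi,
\]
of dimension $4 + 24 = 28$, is a Hopf subalgebra of $H$; since $28 \nmid 60$, the Nichols-Zoeller theorem will then yield a contradiction. Closure of $H_0$ under $\Delta$ and $\mathcal{S}$ is immediate: it is a sum of simple subcoalgebras, and $\mathcal{S}$ sends $C_\chi$ to $C_{\chi^*}$. Closure under multiplication reduces to showing that the product $\chi \chi'$ of any two degree-$2$ characters has no degree-$4$ component, so that $C_\chi C_{\chi'}$ stays within $H_0$. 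Suppose instead $\chi \chi' = \psi$ for some degree-$4$ irreducible $\psi$. Since $\chi^*$ is degree $2$ with stabilizer $G_1$, dualizing $h \chi^* = \chi^*$ shows that $\chi h = \chi$ for every $h \in G_1$. Writing $\chi'(\chi')^* = \sum_{h \in G_1} h + W$ with $W$ a sum of degree-$2$ characters, one then computes
\[
\psi \psi^* = \chi \bigl(\chi'(\chi')^*\bigr) \chi^* = |G_1| \, \chi \chi^* + \chi W \chi^*,
\]
so $m(1, \psi \psi^*) \geq |G_1| \geq 2$, contradicting $m(1, \psi \psi^*) = 1$ for irreducible $\psi$.

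Once $H_0$ is established as a Hopf subalgebra of dimension $28$, the contradiction with Nichols-Zoeller forces the existence of two degree-$2$ characters with distinct stabilizers, and Lemma \ref{n=4} then identifies $H$ with the self-dual Hopf algebra $\B$.
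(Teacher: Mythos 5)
Your proof is correct and follows essentially the same route as the paper: both reduce to Lemma \ref{n=4} by observing that if no product of two degree-$2$ characters were irreducible of degree $4$, the sum of the simple subcoalgebras of dimensions $1$ and $2$ would be a $28$-dimensional Hopf subalgebra, contradicting Nichols--Zoeller since $28 \nmid 60$. The only cosmetic difference is that you verify the implication ``$\chi\chi'$ irreducible $\Rightarrow$ $G[\chi]\cap G[\chi'] = 1$'' by a direct multiplicity computation (in contrapositive form), where the paper cites \cite[Theorem 2.4.2]{ssld}, which is in the spirit of its Lemma \ref{prod-irred}.
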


\begin{proof} Since the simple Hopf algebra $\B$ is of type $(1, 4; 2, 6; 4,
2)$ as a coalgebra, in view of Lemma \ref{n=4}, it is enough to
show that $H$ has irreducible characters  $\chi$ and $\chi'$ of
degree $2$, such that $G[\chi] \neq G[\chi']$.

In view of the coalgebra structure, there must exist irreducible
characters $\chi$ and $\chi'$ of degree $2$, such that $\chi\chi'$
is irreducible of degree $4$. Otherwise, the sum of simple
subcoalgebras of dimensions $1$ and $2$ would be a Hopf subalgebra
of $H$ of dimension $28$, which is impossible. By \cite[Theorem
2.4.2]{ssld}, we have $G[\chi] \cap G[\chi'] = 1$, thus $G[\chi]$,
$G[\chi']$ are distinct subgroups of order $2$. \end{proof}

\section{Appendix: The group $\Opext(k^N, k\Z_2)$}\label{opext}

Suppose that $F = \Z_2$ is the cyclic group of order $2$, and let
$N$ be a finite group. A matched pair $(\Z_2, N)$ necessarily has
trivial action $\vartriangleright$, whence the action
$\vartriangleleft: N \times \Z_2 \to N$ is by group automorphisms.

\medbreak Let $G = N \rtimes \Z_2$ be the corresponding semidirect
product. Note that the restriction map $H^2(N\rtimes \Z_2,
k^{\times}) \to H^2(\Z_2, k^{\times}) = 1$ is necessarily trivial.

By \cite{tahara}, there is an exact sequence
$$1 \to H^1(\Z_2, \widehat N) \to H^2(G, k^{\times})
\overset{\res}\to H^2(N, k^{\times})^{\Z_2} \overset{d_2}\to
H^2(\Z_2, \widehat N) \to \widetilde H^3(G, k^{\times}).$$

\begin{lemma}\label{z2-gral} Let $(\Z_2, N)$ be a matched pair as
above. Assume in addition that $H^2(N, k^{\times}) \simeq \Z_2$.
Then we have a group isomorphism $$\Opext(k^N, k\Z_2)/K \simeq 1,
\text{ or } \Z_2,$$ where $K \simeq d_2(H^2(N, k^{\times}))
\subseteq H^2(\Z_2, \widehat N)$.

The extensions corresponding to elements of $K$ are, as Hopf
algebras, twisting deformations of the split extension $k^N \#
k\Z_2$. \end{lemma}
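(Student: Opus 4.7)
The strategy is to combine the Kac exact sequence for abelian Hopf algebra extensions with Tahara's exact sequence already displayed at the start of this appendix. The Kac sequence reads
\[
H^2(G, k^\times) \to H^2(N, k^\times)^{\Z_2} \oplus H^2(\Z_2, k^\times)^N \to \Opext(k^N, k\Z_2) \to H^3(G, k^\times) \to \cdots.
\]
Since $k^\times$ is divisible, $H^2(\Z_2, k^\times) = 1$, and since $\Aut(\Z_2) = 1$ the $\Z_2$-action on $H^2(N, k^\times) \simeq \Z_2$ is trivial, so $H^2(N, k^\times)^{\Z_2} = H^2(N, k^\times)$. Tahara's sequence then identifies the cokernel of $\res: H^2(G,k^\times) \to H^2(N,k^\times)$ with $\operatorname{Im}(d_2) = K$, yielding a canonical injection $K \hookrightarrow \Opext(k^N, k\Z_2)$ whose image is precisely the kernel of the boundary map $\Opext(k^N, k\Z_2) \to H^3(G, k^\times)$.

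To bound the quotient $\Opext(k^N, k\Z_2)/K$ by $\Z_2$ I will pass to the cocycle presentation. By Remark \ref{sigma} every class admits a representative $(1, \tau)$ with $\sigma = 1$, so it is determined by $\tau_g \in Z^2(N, k^\times)$ (together with the compatibility forced on $\tau_g$ by the $\Z_2$-action $\vartriangleleft$ once $\sigma = 1$). Assigning to such a class the cohomology class $[\tau_g] \in H^2(N, k^\times) \simeq \Z_2$ yields a well-defined group homomorphism $\varphi: \Opext(k^N, k\Z_2) \to \Z_2$, well-defined because cocycle equivalence modifies $\tau_g$ only by a coboundary. I will then verify, by unwinding the Kac connecting map and comparing it with the Tahara transgression $d_2$, that $\ker \varphi$ coincides with the image of $K$. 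It then follows that $\Opext(k^N, k\Z_2)/K$ embeds into $H^2(N, k^\times) \simeq \Z_2$, proving the first assertion.

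For the twisting claim, pick a cocycle $\alpha_0 \in Z^2(N, k^\times)$ representing $\alpha \in H^2(N, k^\times)$ and interpret it as a Hopf $2$-cocycle on $k^N$; extended along the canonical embedding $k^N \otimes k^N \hookrightarrow (k^N \# k\Z_2)^{\otimes 2}$ it becomes a Hopf $2$-cocycle $J_\alpha$ on the split extension $k^N \# k\Z_2$. A direct computation using formulas \eqref{mult}--\eqref{antipode} shows that the twisted Hopf algebra $(k^N \# k\Z_2)_{J_\alpha}$, after renormalising $\sigma$ back to $1$ as in Remark \ref{sigma}, realises exactly the extension class $d_2(\alpha) \in K$. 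The principal difficulty lies in Step 2, namely verifying $\ker\varphi = K$: this requires matching the explicit formula for the Kac connecting map with that of the transgression $d_2$ in Tahara's sequence, while carefully tracking the normalisation forced by Remark \ref{sigma}.
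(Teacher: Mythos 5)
Your proposal is correct in outline and, at the level of ideas, follows essentially the same route as the paper: both arguments turn on the segment of the Kac--Tahara--Mastnak exact sequence around $\Opext(k^N,k\Z_2)$, on identifying the connecting map $H^2(N,k^{\times})\to \Opext(k^N,k\Z_2)$ with Tahara's transgression $d_2$, and on a homomorphism $\Opext(k^N,k\Z_2)\to H^1(\Z_2,H^2(N,k^{\times}))\simeq \Z_2$ whose kernel is $K$ (your $\varphi$ is the paper's map $\pi$, read through the evaluation isomorphism $H^1(\Z_2,\Z_2)\simeq\Z_2$). The difference is one of packaging: the paper outsources everything to \cite{mastnak} --- it quotes $\Opext(k^N,k\Z_2)=H^2_c(k\Z_2,k^N)$ from \cite[Theorem 4.4]{mastnak} (this is where algebraic closedness of $k$ enters), the identity $H^2_m(k\Z_2,k^N)=H^2_m\cap H^2_c=d_2(H^2(N,k^{\times}))=K$, and the exact sequence $H^2(N,k^{\times})\oplus K\to \Opext(k^N,k\Z_2)\overset{\pi}{\to}\Z_2$ from \cite[Theorem 7.1]{mastnak} --- whereas you propose to reconstruct this exactness by hand from the Kac and Tahara sequences. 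What your version buys is transparency about where each hypothesis is used; what it costs is that the one step you explicitly defer, $\ker\varphi\subseteq K$, is the entire technical content of the lemma. That step is true and your plan for it would succeed, but it is precisely the exactness of Mastnak's sequence at $\Opext$, so the cleanest way to close your argument is to cite \cite[Theorem 7.1]{mastnak} at that point, as the paper does, rather than unwind the connecting maps explicitly.

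Two smaller points. The well-definedness of $\varphi$ is slightly more delicate than ``$\tau_g$ changes by a coboundary'': an equivalence of pairs alters $\tau_g$ by an \emph{action-twisted} coboundary involving $\vartriangleleft$, and this is harmless here only because the induced $\Z_2$-action on $H^2(N,k^{\times})\simeq\Z_2$ is trivial. In the final paragraph, what you call a Hopf $2$-cocycle is really a twist (dual cocycle) $J_{\alpha}=\sum_{s,t}\alpha_0(s,t)\,e_s\otimes e_t$ deforming the \emph{comultiplication}; conjugating $\Delta$ of the split extension by it leaves the multiplication (hence $\sigma$) untouched and produces $\tau_x=(x\cdot\alpha_0)/\alpha_0$, which does represent $d_2(\alpha)$. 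This matches the paper's parenthetical ``twisting deformations (of the comultiplication)'' and its appeal to \cite{ma-ext2}, so your computation is the explicit version of the paper's citation.
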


\begin{proof} To prove the lemma, we shall apply the results of \cite{mastnak},
identifying $\Opext(k^N, k\Z_2)$ with the group, denoted
$H^2(k\Z_2, k^N)$ in \textit{loc. cit.}, of classes of compatible
cocycles $(\sigma, \tau)$. Recall that there are subgroups
$H^2_c(k\Z_2, k^N)$, $H^2_m(k\Z_2, k^N)$ of $\Opext(k^N, k\Z_2)$,
which are identified  with the subgroup of classes of compatible
pairs $(1, \tau)$ and $(\sigma, 1)$, respectively \cite[Section
3]{mastnak}.

In view of the assumption $H^2(N, k^{\times}) \simeq \Z_2$, we
have $H^2(N, k^{\times})^{\Z_2} = H^2(N, k^{\times})$, and
$H^1(\Z_2, H^2(N, k^{\times})) \simeq \Z_2$.

Since $k$ is algebraically closed, we have $\Opext(k^N, k\Z_2) =
H^2_c(k\Z_2, k^N)$, by \cite[Theorem 4.4]{mastnak}. Then, by
\cite[Remark pp. 418]{mastnak}, $$H^2_m(k\Z_2, k^N) = H^2_m(k\Z_2,
k^N) \cap H^2_c(k\Z_2, k^N) = d_2(H^2(N, k^{\times})),$$ where
$d_2: H^2(N, k^{\times}) \to H^2(\mathbb Z_2, \widehat N)$, is the
connecting homomorphism in \cite{tahara}.

Let $K = H^2_m(k\Z_2, k^N)$. By \cite[Theorem 7.1]{mastnak}, we
get an exact sequence
\begin{equation}\label{kac1}H^2(N, k^{\times}) \oplus K \overset{\Phi \oplus \iota}\to
\Opext(k^N, k\Z_2) \overset{\pi}\to \Z_2.
\end{equation}
With the identification $\Phi = d_2$ as in \cite{mastnak}, we see
that
$$(\Phi \oplus \iota) (H^2(N, k^{\times}) \oplus K) = K \subseteq \Opext(k^N, k\Z_2).$$
Thus, \eqref{kac1} induces an exact sequence
\begin{equation}\label{kac-seq}1 \to K \to
\Opext(k^N, k\Z_2) \overset{\pi}\to \Z_2,
\end{equation}  whence $\Opext(k^N, k\Z_2)/ K \simeq \pi(\Opext(k^N, k\Z_2)) \leq \Z_2$.

\medbreak According to \cite[Remark pp. 418]{mastnak}, the exact
sequence \eqref{kac-seq} considered in the proof of Lemma
\ref{z2-gral} restricts, in this case, to the Kac exact sequence
associated to the matched pair $(\Z_2, N)$ \cite{kac},
\cite[Theorem 7.4]{Maext}. In view of \cite{ma-ext2}, all Hopf
algebra extensions $H$ arising from elements of $K$ are, as Hopf
algebras, twisting deformations (of the comultiplication) of the
split extension. This finishes the proof of the lemma.
\end{proof}

\begin{remark}\label{split} Keep the assumptions in Lemma \ref{z2-gral}.
Since $H^2(N, k^{\times}) = \Z_2$, then either $K = 1$ or $K
\simeq \Z_2$. The possibility $K = 1$ is equivalent to $d_2 = 1$,
while $K \simeq \Z_2$ amounts to $d_2$ being injective.

Consider the restriction map $\res: H^2(N\rtimes \Z_2, k^{\times})
\to H^2(N, k^{\times})$. It follows from exactness of the sequence
in \cite{tahara}, that $K = 1$ if and only $\res$ is surjective,
and $K \simeq \Z_2$ if and only if $\res$ is trivial.

\medbreak In particular, if  $H^2(\Z_2, \widehat N) = 1$ (for
instance, if $|\widehat N|$ is odd), we have $\Opext(k^N, k\Z_2)
\simeq \pi(\Opext(k^N, k\Z_2)) \leq \Z_2$.
\end{remark}

\begin{remark} Consider the matched pair $(\Gamma, \Z_2)$ as in Section \ref{2por2},
where $\Gamma$ is a nonabelian polyhedral group. The existence of
the extensions $\mathcal A[\widetilde \Gamma]$, $\mathcal
B[\widetilde \Gamma]$, shows that the map $\pi: \Opext(k^{\Gamma},
k\Z_2) \to \Z_2$ is in this case surjective. Therefore, we have an
isomorphism $\Opext(k^{\Gamma}, k\Z_2)/K \simeq \Z_2$.
\end{remark}


\bibliographystyle{amsalpha}

\end{document}